\newtheorem{thrm}{Theorem}[section]
\newtheorem{cor}[thrm]{Corollary}
\newtheorem{lem}[thrm]{Lemma}
\newtheorem{prop}[thrm]{Proposition}
\theoremstyle{definition}
\newtheorem{defn}[thrm]{Definition}
\newtheorem{exm}[thrm]{Example}
\newtheorem{rem}[thrm]{Remark}
\crefname{thrm}{Theorem}{Theorems}
\crefname{lem}{Lemma}{Lemmas}
\crefname{cor}{Corollary}{Corollaries}
\crefname{prop}{Proposition}{Propositions}
\crefname{defn}{Definition}{Definitions}
\crefname{exm}{Example}{Examples}
\crefname{rem}{Remark}{Remarks}
\crefname{section}{Section}{Sections}
\crefname{equation}{\unskip}{\unskip}
\crefname{enumi}{\unskip}{\unskip}
\newcommand{\mylabel}[2]{#2\def\@currentlabel{#2}\label{#1}}
\renewcommand{\iff}{\Leftrightarrow}
\newcommand{\id}{\mathrm{id}}
\newcommand{\cD}{\mathcal D}
\newcommand{\U}[1]{\mathcal U{(#1)}}
\newcommand{\cF}{\mathcal F}
\newcommand{\cC}{\mathcal C}
\newcommand{\End}[1]{\operatorname{\mathrm{End}}{#1}}
\DeclareMathOperator{\im}{im}
\newcommand{\pr}{\mathrm{pr}}
\newcommand{\m}{{}^{-1}}
\newcommand{\0}{\theta}
\newcommand{\e}{\varepsilon}
\newcommand{\af}{\alpha}
\newcommand{\bt}{\beta}
\newcommand{\lb}{\lambda}
\newcommand{\Lb}{\Lambda}
\newcommand{\D}{\mathcal D}
\newcommand{\f}{\varphi}
\newcommand{\s}{\sigma}
\newcommand{\vt}{\vartheta}
\newcommand{\dl}{\delta}
\newcommand{\A}{\mathcal A}
\newcommand{\R}{{\mathcal R}}
\newcommand{\B}{\mathcal B}
\newcommand{\M}{\mathcal M}
\newcommand{\mt}{\mapsto}
\newcommand{\tl}{\tilde}
\newcommand{\wtl}{\widetilde}
\newcommand{\sst}{\subseteq}
\newcommand{\ol}{\overline}
\begin{document}

\title{Globalization of partial cohomology of groups}

\author{Mikhailo Dokuchaev}
\address{Insituto de Matem\'atica e Estat\'istica, Universidade de S\~ao Paulo,  Rua do Mat\~ao, 1010, S\~ao Paulo, SP,  CEP: 05508--090, Brazil}
\email{dokucha@gmail.com}

\author{Mykola Khrypchenko}
\address{Departamento de Matem\'atica, Universidade Federal de Santa Catarina, Campus Reitor Jo\~ao David Ferreira Lima, Florian\'opolis, SC,  CEP: 88040--900, Brazil}
\email{nskhripchenko@gmail.com}

\author{Juan Jacobo Sim\'on}
\address{Departamento de Matem\'{a}ticas, Universidad de Murcia, 30071 Murcia, Espa\~{n}a}
\email{jsimon@um.es} 

\subjclass[2010]{Primary 20J06; Secondary  16W22, 18G60.}
\keywords{Partial action, cohomology, globalization}

\thanks{This work was partially supported by CNPq of Brazil (Proc. 305975/2013-7),  FAPESP of Brazil (Proc. 2012/01554-7, 2015/09162-9), MINECO (MTM2016-77445-P) and Fundaci\'on S\'eneca of Spain}

\begin{abstract}
 We study the relations between partial and global group cohomology with values in a commutative unital ring $\A$. In particular, for a unital partial action of a group $G$ on $\A$, such that $\A$ is a direct product of commutative indecomposable rings, we show that any partial $n$-cocycle of $G$ with values in $\A$ is globalizable.
\end{abstract}

\maketitle

\section*{Introduction}\label{sec-intro}

Given a partial  action it is natural to ask whether there exists a global action which restricts to the partial one. This question was first considered in the PhD Thesis \cite{Abadie} (see also \cite{AbadieTwo}) and independently in \cite{St2} and \cite{KL} for partial group actions, with subsequent developments in \cite{AbDoExSi,BeF,CF2,CFMarcos,DdRS,DE,DES2,EGG,F,KN16,Pi4}.  More generally the problem was investigated for partial semigroup actions in \cite{GouHol1,Hol1,Kud,Megre2}, for partial groupoid actions in \cite{BP,BPi,Gil} and in the context of partial (weak) Hopf (co)actions in \cite{AAB,AB2,AB3,ABDP2,CasPaqQuaSant,CasPaqQuaSant2,CasQua}.

Globalization results help one to use known facts on global actions in the studies involving partial ones. Thus the first purely ring theoretic  globalization fact \cite[Theorem 4.5]{DE} stimulated intensive algebraic activity, permitting, in particular, to develop a Galois Theory of commutative rings \cite{DFP}. The latter, in its turn, inspired the definition and study  of the concept of a partial action of  a Hopf algebra in \cite{CaenJan}, which is based on globalizable partial group actions, and which became a starting point for  interesting Hopf theoretic developments. Moreover, globalizable partial actions are more ma\-na\-ge\-a\-ble, so that  the great majority of ring theoretic studies on the subject deal with the globalizable case. Among the recent applications of globalization facts we mention their remarkable use  to paradoxical decompositions in \cite{AraE} and to restriction semigroups in \cite{Kud}.   The reader is referred to the surveys \cite{D2,D3,F2} and to the recent book by R.~Exel \cite{E6} for more information about partial actions and their applications.

In \cite{E0} R.~Exel introduced the general concept of a continuous twisted partial action of a locally compact group on a $C^*$-algebra and proved that any second countable $C^*$-algebraic bundle, which is regular in a certain sense, is isomorphic to the $C^*$-algebraic bundle constructed from a twisted partial group action. The purely algebraic version of this result was obtained in \cite{DES1}. The concept involves a twisting which satisfies a kind of $2$-cocycle equality needed for associativity purposes. Thus, it was natural to work out a cohomology theory, encompassing such twistings, and this was done in \cite{DK}. The partial cohomology from
\cite{DK} is strongly related to H.~Lausch's cohomology of inverse semigroups \cite{Lausch} and nicely fits the theory of partial projective group representations developed in \cite{DN}, \cite{DN2} and \cite{DoNoPi}.

The main globalization result from \cite{DES2} says that if $\A$ is a (possibly infinite) product of indecomposable rings (blocks), then any unital twisted partial action $\af$ of a group $G$ on $\A$ possesses an enveloping action, i.e. there exists a twisted global action $\bt$ of $G$ on a ring $\B$ such that $\A$ can be identified with a two-sided ideal in $\B$, $\af$ is the restriction of $\bt$ to $\A$ and $\B=\sum_{g\in G}\bt_g(\A)$. Moreover, if $\B$ has an identity element, then any two globalizations of $\af$ are equivalent in a   natural sense. If $\A$ is commutative, then $\af$ splits into two parts: a unital partial $G$-module structure on $\A$ (i.e. a unital partial action of $G$ on $\A$) and a twisting which is a partial $2$-cocycle $w$ of $G$ with values in the partial module $\A$.  In this case $\B$ is also commutative, and $\bt$ splits into a global action of $G$ on $\B$ (so we have a global $G$-module structure on $\B$) and a usual $2$-cocycle of $G$ with values in the group of units of the multiplier algebra of $\B$. The above mentioned results from \cite{DES2} mean in this context that given a unital $G$-module structure on $\A$, for any $2$-cocycle of $G$ with values in $\A$ there exists a (usual) $2$-cocycle $u$ of $G$ related to the global action on $\B$ such that $w$ is the restriction of $u$. In this case we say that $u$ is a globalization of $w$ (see \cref{def:glob_cocycle}). Moreover, if $\B$ has an identity element, then any two globalizations of $w$ are cohomologous.

The purpose of the present article is to extend the results from \cite{DES2} in the commutative case to arbitrary $n$-cocycles. The technical difficulties coming from \cite{DES2} are being overcome by improvements and notation. In \cref{sec-background}  we recall some notions needed in the sequel. The main result of \cref{notion-glob} is \cref{w-glob-iff-exists-tilde-w}, in which we prove that given a unital partial $G$-module structure on a commutative ring $\A$, a partial $n$-cocycle $w$ with values in $\A$ is globalizable if and only if $w$ can be extended to an $n$-cochain $\tl w$ of $G$ with values in the unit group $\U\A$ which satisfies a ``more global'' $n$-cocycle identity \cref{cocycle-ident-for-tilde-w}. This is the $n$-analogue of \cite[Theorem~4.1]{DES2} in the commutative setting. The technical part of our work is concentrated in \cref{sec-lemmas}, in which we assume that $\A$ is a product of blocks, and this assumption is maintained for the rest of the paper. Our goal is to construct a more 
manageable partial $n$-cocycle $w'$ which is cohomologous to $w$ (see \cref{w'-cohom-w}). In \cref{existence-glob} we prove our main existence result \cref{glob-exists}. The defining formula for $w'$ permits us to extend easily $w'$ to an $n$-cochain $\wtl{w'}: G^n\to\U\A$ which satisfies our  ``more glo\-bal'' $n$-cocycle identity (see \cref{w'-tilde-is-quasi-cocycle}). Modifying $\wtl{w'}$ by a ``co-boundary looking'' function we define in  \cref{tilde-w-in-terms-of-tl-w'} a function $\tl w: G^n\to\U\A$ and show that $\tl w$ is a desired extension of $w$ fitting \cref{w-glob-iff-exists-tilde-w}, and permitting us to conclude that $w$ is glo\-ba\-li\-za\-ble. The uniqueness of a globalization is treated in \cref{sec-uniqueness}. It turns out that it is possible to omit the assumption that the ring $\B$ under the global action has an identity element, imposed in \cite{DES2} (with $n=2$). More precisely,  we prove in \cref{uniqueness-of-glob} that given a  globalizable partial action $\af$ of 
$G$ on a commutative ring $\A$, which is a product of blocks, and a partial $n$-cocycle $w$ related to $\af$, any two globalizations of $w$ are cohomologous. More generally, arbitrary glo\-ba\-li\-za\-tions of cohomologous partial $n$-cocycles are also cohomologous. This results in \cref{H^n(G_A)-cong-H^n(G_B)} which establishes an isomorphism between the partial cohomology group $H^n(G,\A)$ and the global one $H^n(G,\U{\M (\B )}),$ where  $\U{\M (\B })$ stands for the unit group of the multiplier ring $\M(\B)$ of $\B$. \cref{sec-exm} serves as a demonstration of our technique. In \cref{exm-w-of-Z_3} we give an explicit construction of a globalization of an arbitrary partial $2$-cocycle associated with a ``shift'' partial action of a group of order $3$ on the direct product of $2$ copies of a commutative unital ring. In \cref{H^2(Z_3_A)-triv} we also show (independently from the result of \cref{exm-w-of-Z_3}) that the corresponding partial and global $2$-cohomology groups are isomorphic.

\section{Background on globalization and cohomology of partial actions}\label{sec-background}

In all what follows $G$ will stand for an arbitrary group whose identity element will be denoted by $1,$ and by a ring we shall mean an associative ring, which is not unital in general. Nevertheless,  our main attention will be paid to partial actions on  commutative  and unital rings.

In this section we recall a couple of concepts around partial actions.
\begin{defn}[see \cite{DE}]\label{def:ParAc}
Let  $\A$ be a ring.  A {\it partial action} $\af$ of $G$ on $\A$ is a collection of  two-sided ideals $\D_g \sst \A$ $(g \in G)$ and ring isomorphisms $\af_g:\D_{g\m}\to\D_g$ such that 

\begin{enumerate}
 \item $\D_1 = \A$ and $\af_1$ is the identity automorphism of $\A$;\label{D_1-is-A}
 \item for all $g,h\in G$: $\af_g(\D_{g\m}\cap\D_h)=\D_g\cap\D_{gh}$;\label{af(D_g-inv-cap-D_h)}
 \item for all $g,h\in G$ and $a\in\D_{h\m}\cap\D_{h\m g\m}$: $\af_g\circ\af_h(a)=\af_{gh}(a)$.\label{af_g-circ-af_h}
\end{enumerate}

\end{defn} An equivalent form to state \cref{D_1-is-A,af(D_g-inv-cap-D_h),af_g-circ-af_h} is as follows:

\begin{enumerate}
 \item $\af_1={\rm id}_\A$;
 \item[(iv)] for all $g,h\in G$ and $a\in A$: if $\af_h(a)$ and $\af_g\circ\af_h(a)$ are defined, then $\af_{gh}(a)$ is defined and $\af_g\circ\af_h(a)=\af_{gh}(a)$.
\end{enumerate}


Partial actions can be obtained as restrictions of \textit{global} ones, i.e. those satisfying $\D_g=\A$ for all $g\in G$, as follows. Let $\bt$ be a global action of $G$ on a ring $\B$ and $\A$ a two-sided ideal in $\B$. Then setting $\D_g=\A\cap\bt_g(\A)$ and denoting by $\af_g$ the restriction of $\bt_g$ to $\D_{g\m}$ for all $g\in G$, we readily see that $\af=\{\af_g:\D _{g\m}\to\D_g\mid g\in G\}$ is a partial action of $G$ on $\A$, called the {\it restriction} of $\bt$ to $\A$, and $\alpha$ is said to be an {\it admissible restriction} of $\bt$ if $\B=\sum_{g\in G}\bt_g(\A)$. Clearly, if $\B\neq\sum_{g\in G}\bt_g(\A)$, then replacing $\B$ by $\sum_{g\in G}\bt_g(\A)$, the partial action $\af$ can be viewed as an admissible restriction. Partial actions isomorphic to restrictions of global ones are called {\it globalizable}. The notion of an isomorphism of partial action is defined as follows. 

\begin{defn}[see p. 17 from \cite{AbadieTwo} and Definition 4 from \cite{DN2}]\label{def:MorfParAc} 
Let $\A$ and $\A'$ be rings and   $\af=\{\af_g:\D_{g\m}\to\D _g\mid g\in G\}$, $\af'=\{\af'_g:\D'_{g\m}\to\D'_g\mid g\in G\}$ be partial actions of $G$ on $\A$ and $\A'$, respectively. A {\it morphism} $(\A,\af)\to(\A',\af')$ of partial actions is a ring homomorphism $\f:\A\to\A'$ such that for any $g\in G$ and $a\in\D_{g\m}$ the next two conditions are satisfied:
\begin{enumerate}
 \item $\f(\D_g)\sst\D'_g$;\label{f(D_g)-sst-D'g}
 \item $\f(\af_g(a))=\af'_g(\f(a))$.\label{f(af_g(a))=af'_g(f(a))}
\end{enumerate}
We say that a morphism $\f:(\A,\af)\to(\A',\af')$ of partial actions is an {\it isomorphism}\footnote{This was called {\it equivalence} in \cite[Definition 4.1]{DE}.} if $\f:\A\to\A'$ is an isomorphism of rings and $\f(\D_g)=\D'_g$ for each $g\in G$.
\end{defn}

By \cite[Theorem 4.5]{DE} a partial action $\alpha$ on a unital ring $\A$ is globalizable exactly when each ideal $\D_g$ is a unital ring, i.e. $\D_g$ is generated by an idempotent which is central in $\A$, and which will be denoted by $1_g$. In order to guarantee the uniqueness of a globalization one considers the following.

\begin{defn}[Definition 4.2 from \cite{DE}]\label{def:envelopeaction} 
A global action $\bt $ of $G$ on a ring $\B$ is said to be an {\it enveloping} action for the partial action $\af$ of $G$ on a ring $\A$ if $\af $ is isomorphic to an admissible restriction of $\bt$. 
\end{defn}

By the above mentioned \cite[Theorem~4.5]{DE}, an enveloping action $\bt$ for a globalizable partial action of $G$ on a unital ring $\A$ is unique up to an isomorphism. Denote by $\cF=\cF(G,\A)$ the ring of functions from $G$ to $\A$, i.e. $\cF$ is the Cartesian product of copies of $\A$ indexed by the elements of $G$. Note that by the proof of \cite[Theorem~4.5]{DE}, the ring under the global action  is a subring $\B$ of $\cF$, and consequently  $\B$ is commutative if and only if $\A$ is.

Every ring is a semigroup with respect to multiplication, and if in \cref{def:ParAc} we assume that $\A$ is a (multiplicative) semigroup and the maps $\af_g$ are isomorphisms of semigroups satisfying \cref{D_1-is-A,af(D_g-inv-cap-D_h),af_g-circ-af_h}, then we obtain the concept of a partial action of $G$ on a semigroup (see~\cite{DN}). Furthermore, the concept of a morphism of partial actions on semigroups is obtained from \cref{def:MorfParAc} by assuming that $\f:\A\to\A'$ is a homomorphism of semigroups satisfying \cref{f(D_g)-sst-D'g,f(af_g(a))=af'_g(f(a))}. 

Partial cohomology was defined in \cite{DK} as follows. Let $\af=\{\af_g:\D_{g\m}\to\D_g\mid g \in G\}$ be a partial action of $G$ on a commutative monoid $\A$.  Assume  that each ideal $\D _g$ is unital, i.e. $\D_g$ is generated by an  idempotent $1_g =1_g^\A$. In this case we shall say that $\af $ is a {\it unital} partial action. Then $\D_g \cap \D_h=\D_g\D_h$, for all $g,h\in G,$ so the properties \cref{af(D_g-inv-cap-D_h),af_g-circ-af_h} from \cref{def:ParAc} can be replaced by
\begin{enumerate}
 \item[\mylabel{af(D_g-inv-D_h)}{(ii')}] $\af_g(\D_{g\m}\D_h)=\D_g\D_{gh}$;
 \item[\mylabel{af_g-circ-af_h-on-product}{(iii')}] $\af_g\circ\af_h=\af_{gh}$ on $\D_{h\m}\D_{h\m g\m}$.
\end{enumerate}
Note also that \ref{af_g-circ-af_h-on-product} implies a more general equality 
\begin{align}\label{product-ideals}
 \af_x(\D_{x\m}\D_{y_1}\dots\D_{y_n})=\D_x\D_{xy_1}\dots\D_{xy_n},
\end{align}
for any $x,y_1,\dots,y_n\in G$, which easily follows by observing that $\D_{x\m}\D_{y_1}\dots\D_{y_n}=\D_{x\m}\D_{y_1}\dots\D_{x\m}\D_{y_n}$.

\begin{defn}[see \cite{DK}]\label{defn-p-mod}
A commutative monoid $\A$ with a unital partial action $\af$ of $G$ on $\A$ will be called {\it a (unital) partial $G$-module}. A {\it morphism} of (unital) partial $G$-modules $\f:(\A,\af)\to(\A',\af')$ is a  morphism of partial actions such that its restriction to each $\D_g$ is a homomorphism of monoids $\D_g\to\D'_g$, $g\in G$.
\end{defn}

For simplicity, we shall often omit $\af$ from the pair $(\A,\af)$, if no confusion arises.

\begin{defn}[see \cite{DK}]\label{defn-cochain}
 Let $\A$ be a partial $G$-module and $n$ a positive integer. {\it An $n$-cochain} of $G$ with values in $\A$ is a function $f:G^n\to\A$, such that $f(x_1,\dots,x_n)$ is an invertible element of the ideal $\D_{(x_1,\dots,x_n)}=\D_{x_1}\D_{x_1x_2}\dots \D_{x_1\dots x_n}$ for any $(x_1,\dots,x_n)\in G^n$. By {\it a $0$-cochain} we shall mean an invertible element of $\A,$ i.e. $a\in\U\A$, where  $\U\A$ stands for the group of invertible elements of $\A$.
\end{defn}

Denote the set of $n$-cochains by $C^n(G,\A)$. It is an abelian group under the pointwise multiplication. Indeed, its identity is $e_n$ which is the $n$-cochain defined by 
$$
 e_n(x_1,\dots,x_n)=1_{(x_1,\dots,x_n)}:=1_{x_1}1_{x_1x_2}\dots 1_{x_1\dots x_n},
$$
and the inverse of $f\in C^n(G,\A)$ is $f\m(x_1,\dots,x_n)=f(x_1,\dots,x_n)\m$, where $f(x_1,\dots,x_n)\m$ means the inverse of $f(x_1,\dots,x_n)$ in $\D_{(x_1,\dots,x_n)}$.

The multiplicative form of the classical coboundary homomorphism now can be adapted to our context by replacing the global action by a partial one, and taking inverse elements in the corresponding ideals, as follows.

\begin{defn}[see \cite{DK}]\label{defn-cobound-hom}
 Let $(\A,\af)$ be a partial $G$-module and $n$ a positive integer. For any $f\in C^n(G,\A)$ and $x_1,\dots,x_{n+1}\in G$ define
\begin{align}\label{delta^n-for-n>0}
(\dl^nf)(x_1,\dots,x_{n+1})&=\af_{x_1}(1_{x\m_1}f(x_2,\dots,x_{n+1}))\notag\\
&\quad\cdot\prod_{i=1}^nf(x_1,\dots,x_ix_{i+1},\dots,x_{n+1})^{(-1)^i}\notag\\
&\quad\cdot f(x_1,\dots,x_n)^{(-1)^{n+1}}. 
\end{align}
If $n=0$ and $a$ is an invertible element of $\A$, we set 
\begin{align}\label{delta^0-def}
 (\dl^0a)(x)=\af_x(1_{x\m}a)a\m.
\end{align}
\end{defn}

According to \cite[Proposition~1.5]{DK} the coboundary map $\dl^n$ is a homomorphism $C^n(G,\A)\to C^{n+1}(G,\A)$ of abelian groups, such that 
\begin{align}\label{delta^2=0}
 \dl^{n+1}\dl^nf=e_{n+2}
\end{align} 
for any $f\in C^n(G,\A)$. As in the classical case one defines the abelian groups of {\it partial $n$-cocycles, $n$-co\-boun\-da\-ries and $n$-cohomologies} of $G$ with va\-lu\-es in $\A$ by setting $Z^n(G,\A)=\ker{\dl^n}$, $B^n(G,\A)=\im{\dl^{n-1}}$ and $H^n(G,\A)=\ker{\dl^n}/\im{\dl^{n-1}}$, $n\ge 1$ ($H^0(G,\A)=Z^0(G,\A)=\ker{\dl^0}$). Then two partial $n$-cocycles which represent the same element of $H^n(G,\A)$ are called {\it cohomologous}.

Taking $n=0$, we see that
\begin{align*}
 H^0(G,\A)&=Z^0(G,\A)=\{a\in\U\A\mid\forall x\in G:\ \af_x(1_{x\m}a)=1_xa\},\\
 B^1(G,\A)&=\{f\in C^1(G,\A)\mid\exists a\in\U\A\ \forall x\in G:\ f(x)=\af_x(1_{x\m}a)a\m\}. 
\end{align*} 
Notice that $H^0(G,\A)$ is exactly the subgroup of $\alpha$-invariants of $\U\A$, as defined (for the case of rings) in \cite[p. 79]{DFP}. In order to relate partial cohomology to twisted partial actions, consider the cases $n=1$ and $n=2$. In the first case we have
$$
 (\dl^1 f)(x,y)=\af_x(1_{x\m}f(y))f(xy)\m f(x)
$$
with $f\in C^1(G,\A)$, so that 
\begin{align*}
 Z^1(G,\A)&=\{f\in C^1(G,\A)\mid\forall x,y\in G:\ \af_x(1_{x\m}f(y))f(x)=1_xf(xy)\},\\
 B^2(G,\A)&=\{f\in C^2(G,\A)\mid \exists g\in C^1(G,\A)\ \forall x,y\in G:\ f(x,y)=\af_x(1_{x\m}g(y))g(xy)\m g(x)\},
\end{align*}
and for $n=2$ 
$$
 (\dl^2f)(x,y,z)=\af_x(1_{x\m}f(y,z))f(xy,z)\m f(x,yz) f(x,y)\m,
$$
with $f\in C^2(G,\A)$, and 
$$
 Z^2(G,\A)=\{f\in C^2(G,\A)\mid\forall x,y,z \in G:\ \af_x(1_{x\m}f(y,z))f(x,yz)=f(x,y)f(xy,z)\}.
$$
Now, a unital twisted partial action (see~\cite[Def.~2.1]{DES1}) of $G$ on a commutative ring $\A$ splits into two parts: a unital partial action of $G$ on $\A$, and a twisting which, in our terminology, is a $2$-cocycle with values in the partial $G$-module $\A$. Furthermore, the concept of equivalent unital twisted partial actions from~\cite[Def. 6.1]{DES2} is exactly the notion of equivalence of partial $2$-cocycles.  

We shall use multipliers in order to define globalization of partial cocycles, and for this purpose we remind the reader that the {\it multiplier ring} of an associative not necessarily unital ring $\A$ is the set 
$$
\M(\A)=\{(R,L)\in\End( _\A\A)\times\End(\A_\A):(aR)b = a(Lb)\mbox{ for all } a,b\in\A\}
$$ 
with component-wise addition and multiplication (see \cite{AHdR} or \cite{DE} for more details). Here we use the right-hand side notation for homomorphisms of left $\A$-modules, whereas for homomorphisms of right modules the usual notation is used. Thus given $R\in\End( _\A\A)$,  $L\in\End(\A_\A)$ and $a\in\A$, we write $a\mapsto aR$ and $a\mapsto La$. For a multi\-plier $u=(R,L)\in \M(\A)$ and an element $a\in\A$ we set $au=aR$ and $ua=La$, so that the associativity equality $(au)b=a(ub)$ always holds with $a,b\in\A$.

Notice that 
\begin{align}\label{e-comm-with-u}
eu = ue
\end{align} 
for any $u\in\M(\A)$ and any central idempotent $e\in\A$.  For 
$$
eu=(e^2)u=e(eu)=(eu)e=e(ue)=(ue)e=ue.
$$

Any $a\in\A$ determines a multiplier $u_a$ by setting $u_ab = ab$ and $bu_a= ba$, $b\in\A$, so that $a\mapsto u_a$ gives the canonical homomorphism $\A\to\M(\A)$, which is an isomorphism if $\A$ has $1_\A$ (in this case the inverse isomorphism is given by $\M(\A)\ni u\mapsto u1_\A=1_\A u\in\A$). According to \cite{DE} a ring $\A$ is said to be {\it non-degenerate} if the canonical map $\A \to\M(\A)$ is injective. This is guaranteed if $\A$ is left (or right) {\it $s$-unital}, i.e. for any $a\in\A$ one has $a\in\A a$ (respectively, $a\in a\A$).

Furthermore, given a ring isomorphism $\phi:\A\to\A'$, the map $\M(\A)\ni u\mt\phi u\phi\m\in\M(\A')$, where $\phi u\phi\m=(\phi\m R\phi,\phi L \phi\m)$, $u=(R,L)$, is an isomorphism of rings. In particular, an automorphism $\phi$ of $\A$ gives rise to an automorphism 
\begin{align}\label{conjugation-by-phi} 
u\mt\phi u\phi\m 
\end{align} 
of $\M(\A)$.

We shall also use the following.
\begin{rem}[see Remark 5.2 from \cite{DK2} and Lemma 3.1 from \cite{DK3}]\label{M(A)-comm} 
If $\A$ is a commutative idempotent ring, then $\M(\A)$ is also commutative and for each $w\in \M(\A)$ and $a\in\A$ one has $aw=wa$.
\end{rem} 


\section{The notion of a globalization of a partial cocycle and its relation with an extendibility property}\label{notion-glob}

In this section we introduce the concept of a globalization of a partial $n$-cocycle with values in a commutative unital ring $\A$ and show that a partial $n$-cocycle $w$ is globalizable, provided that an extendibility property for $w$ holds. We start with a general auxiliary result which does not involve partial actions.

Let $G$ be a group and $\A$ a commutative unital ring. For $f\in\cF=\cF(G,\A)$ denote by $f|_t$ the value $f(t)$ and define $\beta_x:\cF\to\cF$ by 
\begin{align}\label{beta_x(f)_t}
\beta_x(f)|_t=f(x\m t), 
\end{align}
where $x,t\in G$. Then $\beta$ is a global action of $G$ on $\cF$ which was used in \cite{DE} to deal with the globalization problem for partial actions on unital rings.

Let $\wtl w:G^n\to\U\A$ be a function, i.e. $\wtl w$ is an element of the group $C^n(G,\U\A)$ of global (classical) $n$-cochains of $G$ with values in $\U\A$. Define $u:G^n\to \U\cF$ by
\begin{align}\label{u-def-n>0}
 u(x_1,\dots,x_n)|_t&=\wtl w(t\m, x_1,\dots,x_{n-1})^{(-1)^n}\wtl w(t\m x_1, x_2, \dots,x_n)\notag\\
 &\quad\cdot\prod_{i=1}^{n-1}\wtl w(t\m,x_1,\dots,x_ix_{i+1},\dots,x_n)^{(-1)^i},\ n>0.
\end{align}

We proceed with a technical fact which will be used in the main result of this section.

\begin{lem}\label{u-is-n-cocycle}
 The $n$-cochain $u$ is an $n$-cocycle with respect to the action $\beta$ of $G$ on $\U\cF$, i.e. $u\in Z^n(G,\U\cF)$.  
\end{lem}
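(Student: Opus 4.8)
Since $\beta$ is a \emph{global} action of $G$ on the unital ring $\cF$, every ideal in the defining data equals $\cF$ itself and every $1_g$ is the constant function $1$. Hence the partial coboundary \cref{delta^n-for-n>0}, applied to the $\beta$-module $\U\cF$, specializes to the ordinary inhomogeneous multiplicative coboundary
\begin{align*}
(\dl^nu)(x_1,\dots,x_{n+1})&=\beta_{x_1}(u(x_2,\dots,x_{n+1}))\\
&\quad\times\prod_{i=1}^nu(x_1,\dots,x_ix_{i+1},\dots,x_{n+1})^{(-1)^i}u(x_1,\dots,x_n)^{(-1)^{n+1}}.
\end{align*}
It therefore suffices to evaluate this at an arbitrary $t\in G$ and to show that the result equals $1\in\U\A$.

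The key step is a factorization of $u$. Let $\dl_{\mathrm{cl}}$ denote the classical coboundary of $G$-cochains with values in $\U\A$ regarded as a trivial $G$-module, and set $D=\dl_{\mathrm{cl}}\wtl w\in C^{n+1}(G,\U\A)$. Matching the three groups of factors in \cref{u-def-n>0} against those of $D$ evaluated with front variable $t\m$, and comparing exponents, the definition of $u$ reads exactly
\[
u(x_1,\dots,x_n)|_t=\wtl w(x_1,\dots,x_n)\,D(t\m,x_1,\dots,x_n)\m .
\]
Thus $u=c\cdot d$ in the abelian group $C^n(G,\U\cF)$, where $c(x_1,\dots,x_n)|_t=\wtl w(x_1,\dots,x_n)$ is independent of $t$ and $d(x_1,\dots,x_n)|_t=D(t\m,x_1,\dots,x_n)\m$. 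Establishing this identity is the one genuinely computational point, and it is where I expect to have to be most careful: the bookkeeping of the signs $(-1)^i$ and the reindexing caused by the merging of adjacent arguments must line up precisely.

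Because $\dl^n$ is a homomorphism of abelian groups, $\dl^nu=\dl^nc\cdot\dl^nd$, and I would treat the two factors separately. The constant functions form a $\beta$-invariant subring of $\cF$ isomorphic to $\A$ with trivial $G$-action, since $\beta_x$ fixes every constant function; consequently $\dl^nc$ is just the constant-function image of $\dl_{\mathrm{cl}}\wtl w=D$, that is $(\dl^nc)(x_1,\dots,x_{n+1})|_t=D(x_1,\dots,x_{n+1})$. For $d$, the sole effect of the leading $\beta_{x_1}$ is to turn the front variable $t\m$ into $(x_1\m t)\m=t\m x_1$; expanding the remaining factors of \cref{delta^n-for-n>0} then expresses $(\dl^nd)|_t$ as a product of values of $D$ with front variables $t\m x_1$ and $t\m$.

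Finally I would recognize the product $(\dl^nc\cdot\dl^nd)|_t$ as being precisely $(\dl_{\mathrm{cl}}D)(t\m,x_1,\dots,x_{n+1})$, the classical coboundary of the $(n+1)$-cochain $D$ evaluated with front variable $t\m$. Since $D=\dl_{\mathrm{cl}}\wtl w$, the classical relation $\dl_{\mathrm{cl}}\dl_{\mathrm{cl}}=e$, the trivial-coefficient analogue of the identity $\dl^{n+1}\dl^nf=e_{n+2}$ recalled above, forces this to be $1$ for every $t$. Hence $\dl^nu=e_{n+1}$, i.e.\ $u\in Z^n(G,\U\cF)$. The only real obstacle is organizational: arranging the factorization $u=c\cdot d$ with the correct signs so that the two coboundaries recombine into a single classical double coboundary, after which the vanishing is immediate from $\dl\dl=0$.
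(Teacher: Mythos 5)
Your proof is correct and follows essentially the same route as the paper's: your factorization $u(x_1,\dots,x_n)|_t=\wtl w(x_1,\dots,x_n)\,D(t\m,x_1,\dots,x_n)\m$ with $D=\tl\dl^n\wtl w$ is exactly the paper's key observation, and both arguments conclude by recognizing $(\dl^nu)(x_1,\dots,x_{n+1})|_t$ as the double coboundary $(\tl\dl^{n+1}\tl\dl^n\wtl w)(t\m,x_1,\dots,x_{n+1})=1_\A$. Your packaging of $u$ as a product $c\cdot d$ of two cochains, handled via the homomorphism property of $\dl^n$, is merely a cosmetic reorganization of the paper's direct substitution and regrouping of factors.
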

\begin{proof}
We need to show that the function
\begin{align}\label{delta^n-u}
 \beta_{x_1}(u(x_2,\dots,x_{n+1}))\prod_{i=1}^n u(x_1,\dots,x_ix_{i+1},\dots,x_{n+1})^{(-1)^i}u(x_1,\dots,x_n)^{(-1)^{n+1}}
\end{align}
is the identity, i.e. it equals $1_{\cF}$ for any $x_1,\dots,x_{n+1}\in G$. Evaluating \cref{delta^n-u} at $t$ and using \cref{beta_x(f)_t}, we get
\begin{align}\label{delta^n-u-without-beta}
 u(x_2,\dots,x_{n+1})|_{x\m_1 t}\prod_{i=1}^n u(x_1,\dots,x_ix_{i+1},\dots,x_{n+1})^{(-1)^i}|_tu(x_1,\dots,x_n)^{(-1)^{n+1}}|_t.
\end{align}
Denote by  $\tl\dl^n:C^n(G,\U\A)\to C^{n+1}(G,\U\A)$ the coboundary operator which corresponds to the trivial $G$-module, i.e.
\begin{align}\label{tilde-delta}
 (\tl\dl^n\wtl w)(x_1,\dots,x_{n+1})&=\wtl w(x_2,\dots,x_{n+1})\notag\\
 &\quad\cdot\prod_{i=1}^n\wtl w(x_1,\dots,x_ix_{i+1},\dots,x_{n+1})^{(-1)^i}\notag\\
 &\quad\cdot\wtl w(x_1,\dots,x_n)^{(-1)^{n+1}}.
\end{align}
We see from \cref{u-def-n>0} that
$$
 u(x_1,\dots,x_n)|_t=\wtl w(x_1,\dots,x_n)(\tl\dl^n\wtl w)(t\m,x_1,\dots,x_n)\m.
$$
Therefore, \cref{delta^n-u-without-beta} becomes
\begin{align*}
&\wtl w(x_2,\dots,x_{n+1})(\tl\dl^n\wtl w)(t\m x_1,x_2,\dots,x_{n+1})\m\\
&\cdot\prod_{i=1}^n \wtl w(x_1,\dots,x_ix_{i+1},\dots,x_{n+1})^{(-1)^i}\prod_{i=1}^n(\tl\dl^n\wtl w)(t\m,x_1,\dots,x_ix_{i+1},\dots,x_{n+1})^{(-1)^{i+1}}\\
&\cdot\wtl w(x_1,\dots,x_n)^{(-1)^{n+1}}(\tl\dl^n\wtl w)(t\m,x_1,\dots,x_n)^{(-1)^n}.
\end{align*}
Regrouping the factors and using \cref{tilde-delta}, we obtain
\begin{align*}
&(\tl\dl^n\wtl w)(x_1,\dots,x_{n+1})(\tl\dl^n\wtl w)(t\m x_1,x_2,\dots,x_{n+1})\m\\
&\cdot\prod_{i=1}^n(\tl\dl^n\wtl w)(t\m,x_1,\dots,x_ix_{i+1},\dots,x_{n+1})^{(-1)^{i+1}}(\tl\dl^n\wtl w)(t\m,x_1,\dots,x_n)^{(-1)^n},
\end{align*}
which is $(\tl\dl^{n+1}\tl\dl^n\wtl w)(t\m,x_1,\dots,x_n)=1_\A$.
\end{proof}

Let now $\af$ be a unital partial action of $G$ on $\A$. Then
\begin{align}\label{f(a)|_t=af_t-inv(1_ta)}
 \f(a)|_t=\af_{t\m}(1_ta),
\end{align}
where $t\in G$ and $a\in \A$, defines an embedding of $\A$ into $\cF$, and $(\bt,\B)$ is an enveloping action for $(\af,\A)$, where $\B=\sum_{g\in G}\bt_g(\f(\A))$ (see the proof of \cite[Theorem 4.5]{DE}). Since $(\bt,\B)$ is unique up to an isomorphism, it follows by \cite[Theorem~3.1]{DdRS} that $\B$ is left $s$-unital. Hence there is a canonical embedding of $\B$ into the multiplier ring $\M(\B)$ and, moreover, $\B$ is commutative because $\A$ is. In addition, $\B$ is idempotent because $\B$ is left $s$-unital, which implies that $\M(\B)$ is commutative thanks to \cref{M(A)-comm}. Observe that the global action $\bt$ of $G$ on $\B$ can be extended by \cref{conjugation-by-phi} to a global action $\bt^*$ of $G$ on $\M(\B)$ by setting  
\begin{align}\label{gamma^*}
 \bt^*_g(u)=\bt_g u\bt\m_g =(\bt\m_g R\bt_g,\bt_g L\bt\m_g), 
\end{align}
where $u = (R,L)\in\M(\B)$ and $g\in G$.  Moreover, the commutativity of $\M(\B)$ permits us to consider the group of units $\U{\M(\B)}$ as a $G$-module via $\bt^*$.

\begin{defn}\label{def:glob_cocycle}  
Let $\af=\{\af_g:\D_{g\m}\to\D_g\mid g\in G\}$ be a unital partial action of $G$ on a commutative ring $\A$ and $w\in Z^n(G,\A)$. Denote by $\bt$ the enveloping action of $G$ on $\B$ and by $\f:\A\to\B$ the embedding which transforms $\af$ into an admissible restriction of $\bt$. A {\it globalization} of $w$ is a (classical) $n$-cocycle $u\in Z^n(G,\U{\M(\B)})$, where $G$ acts on  $\U{\M(\B)}$ via $\bt^*$, such that
\begin{align}\label{w-is-restr-of-u}
\f(w(x_1,\dots,x_n))=\f(1_{(x_1,\dots,x_n)})u(x_1,\dots,x_n),
\end{align}
for any $x_1,\dots,x_n\in G$. If $n=0$, then by $1_{(x_1,\dots,x_n)}$ we mean $1_\A$ in \cref{w-is-restr-of-u}.
\end{defn}
Observe from \cref{e-comm-with-u} that \cref{w-is-restr-of-u} implies
$$
 \f(w(x_1,\dots,x_n))=u(x_1,\dots,x_n)\f(1_{(x_1,\dots,x_n)}).
$$ 

 
It is readily seen that if $\B$ contains $1_\B$, then the isomorphism $\M(\B)\cong\B$ transforms $\bt^*$ into $\bt$, and the globalization $u$ is an $n$-cocycle with values in $\U\B$. 

\begin{prop}\label{0-cocycle-is-globalizable}
 Observe that any partial $0$-cocycle $w$ is globalizable, and its glo\-ba\-li\-za\-tion is the constant function $u\in\U\cF$ with $u|_t=w$ for all $t\in G$. Moreover, such $u$ is unique.
\end{prop}
\begin{proof}
	Indeed, \cref{w-is-restr-of-u} reduces to $\f(w)=\f(1_\A)u$, which is the partial $0$-cocycle identity for $w$ by \cref{f(a)|_t=af_t-inv(1_ta)}. Moreover, $u$ is an (invertible) multiplier of $\B$, as
	\begin{align*}
		\bt_g(\f(a))|_tu|_t&=\f(a)|_{g\m t}w=\af_{t\m g}(1_{g\m t}a)w=\af_{t\m g}(1_{g\m t}a)\cdot 1_{t\m g}w\\
		&=\af_{t\m g}(1_{g\m t}aw)=\bt_g(\f(aw))|_t
	\end{align*}
	thanks to \cref{f(a)|_t=af_t-inv(1_ta),beta_x(f)_t} and the $0$-cocycle identity for $w$. Applying $\bt_xu\bt\m_x$ to an arbitrary $\bt_y(\f(a))\in\B$ and evaluating the result at any $t\in G$, we obtain by \cref{beta_x(f)_t}
	\begin{align*}
		(\bt_xu\bt\m_x)(\bt_y(\f(a)))|_t&=\bt_xu\bt_{x\m y}(\f(a))|_t=(u\bt_{x\m y}(\f(a)))|_{x\m t}\\
		&=u|_{x\m t}\bt_{x\m y}(\f(a))|_{x\m t}=w\f(a)|_{y\m t}\\
		&=w\bt_y(\f(a))|_t=u|_t\bt_y(\f(a))|_t,
	\end{align*} 
	so that $\bt_xu\bt\m_x$ coincides with $u$ as a multiplier on $\B$, i.e. $u$ is a (global) $0$-cocycle with respect to the action $\bt^*$ of $G$ on $\M(\B)$.
	
	Now if the restrictions of $u_1,u_2\in H^0(G,\U{\M(\B)})$ to the ideal $\f(\A)$ coincide, then $\f(1_\A)u_1=\f(1_\A)u_2$. Applying $\bt_x$ to this equality and using the $0$-cocycle identity for $u_i$ which means that $\bt_xu_i\bt\m_x=u_i$, $i=1,2$, one gets $\bt_x(\f(1_\A))u_1=\bt_x(\f(1_\A))u_2$ for all $x\in G$. Consequently, $\bt_x(\f(a))u_1=\bt_x(\f(a))u_2$ for all $x\in G$ and $a\in\A$. It follows that $u_1=u_2$, as $\B=\sum_{g\in G}\bt_g(\A)$. In particular, this holds for any two globalizations of the same $w\in H^0(G,\A)$.
\end{proof}

\begin{cor}\label{H^0(G_A)-cong-H^0(G_M(B))}
	We have $H^0(G,\A)\cong H^0(G,\U{\M(\B)})$.
\end{cor}
\begin{proof}
	By \cref{0-cocycle-is-globalizable} there is an injective map from $H^0(G,\A)$ to $H^0(G,\U{\M(\B)})$ sending $w\in H^0(G,\A)$ to its globalization $u\in H^0(G,\U{\M(\B)})$, which is readily seen to be a group homomorphism. It follows from the uniqueness of the globalization that this map is also surjective, since any $u\in H^0(G,\U{\M(\B)})$ is the globalization of its restriction to $\f(\A)$. 
\end{proof}

Given an arbitrary $n>0$, as in the case $n=2$ (see \cite[Theorem 4.1]{DES2}), we are able to reduce the globalization problem for partial $n$-cocycles to an extendibility property.

\begin{thrm}\label{w-glob-iff-exists-tilde-w} 
Let $\af=\{\af_g:\D_{g\m}\to\D_g\mid g\in G\}$ be a unital partial action of $G$ on a commutative ring $\A$ and $w\in Z^n(G,\A)$. Then $w$ is glo\-ba\-li\-za\-ble if and only if there exists a function $\wtl w:G^n\to\U\A$ which satisfies the equalities
\begin{align}\label{cocycle-ident-for-tilde-w}
 \af_{x_1}\left(1_{x\m_1}\wtl w(x_2,\dots,x_{n+1})\right)&\prod_{i=1}^n\wtl w(x_1,\dots,x_ix_{i+1},\dots,x_{n+1})^{(-1)^i}\notag\\
 & \cdot\wtl w(x_1,\dots,x_n)^{(-1)^{n+1}} = 1_{x_1},
\end{align} and 
\begin{align}\label{w-is-restr-of-tilde-w}
 w(x_1,\dots,x_n)=1_{(x_1,\dots,x_n)}\wtl w(x_1,\dots,x_n), 
\end{align} 
for all $x_1,\dots,x_{n+1}\in G$.
\end{thrm}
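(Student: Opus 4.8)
The plan is to prove both implications by linking the partial cocycle $w$ to the global object $u \in Z^n(G,\U\cF)$ constructed in \cref{u-def-n>0} from an extension $\wtl w$, using the embedding $\f:\A\to\B$ of \cref{f(a)|_t=af_t-inv(1_ta)} as the bridge. The key computational identity I would establish first is a formula relating the value of $\f(1_{(x_1,\dots,x_n)})$ and $\f(w(x_1,\dots,x_n))$ to the function $u$ evaluated pointwise. Concretely, for any $n$-cochain $f$ of the partial module $\A$ and any $\wtl w$ extending it, I would compute $\f(f(x_1,\dots,x_n))|_t = \af_{t\m}(1_t f(x_1,\dots,x_n))$ and compare it with $u(x_1,\dots,x_n)|_t$ as given in \cref{u-def-n>0}. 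The point is that \cref{w-is-restr-of-u} for the candidate globalization $u$ should be shown equivalent, via \cref{f(a)|_t=af_t-inv(1_ta)}, to the restriction condition \cref{w-is-restr-of-tilde-w}, at least after multiplying by the appropriate idempotent $\f(1_{(x_1,\dots,x_n)})$.

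For the direction "$\wtl w$ exists $\impl$ $w$ globalizable", I would take $\wtl w$ satisfying \cref{cocycle-ident-for-tilde-w,w-is-restr-of-tilde-w}, form $u\in C^n(G,\U\cF)$ by \cref{u-def-n>0}, and verify three things. First, \cref{u-is-n-cocycle} already gives that $u\in Z^n(G,\U\cF)$ as a cochain over $\cF$ with the action $\beta$; I would then argue that the relevant values of $u$ actually land in $\U{\M(\B)}$ and that $u$ is $\bt^*$-invariant in the sense required for $Z^n(G,\U{\M(\B)})$, exactly as in \cref{0-cocycle-is-globalizable} for the case $n=0$. Second, I would check the restriction equality \cref{w-is-restr-of-u}: evaluating $\f(1_{(x_1,\dots,x_n)})u(x_1,\dots,x_n)$ at $t$ and using \cref{f(a)|_t=af_t-inv(1_ta),beta_x(f)_t}, the factors of $\wtl w$ coming from \cref{u-def-n>0} should collapse, by the partial cocycle identity \cref{cocycle-ident-for-tilde-w} and the extension property \cref{w-is-restr-of-tilde-w}, to $\af_{t\m}(1_t w(x_1,\dots,x_n)) = \f(w(x_1,\dots,x_n))|_t$. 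This is where \cref{cocycle-ident-for-tilde-w}, which is the "more global" cocycle identity, does the essential work of forcing the extra $\wtl w$-factors to cancel against each other after translation by $t\m$.

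For the converse "$w$ globalizable $\impl$ $\wtl w$ exists", I would start from a globalization $u\in Z^n(G,\U{\M(\B)})$ and recover $\wtl w:G^n\to\U\A$ by inverting the construction \cref{u-def-n>0}, roughly by extracting $\wtl w(x_1,\dots,x_n)$ from $u$ evaluated at a suitable base point of $G$ (the analogue of reading off $\wtl w$ from the $t=1$ slice), then correcting so that the values lie in $\U\A$ rather than $\U\cF$. The restriction identity \cref{w-is-restr-of-u} should translate, via $\f$, directly into \cref{w-is-restr-of-tilde-w}, while the global cocycle condition $u\in Z^n$ should translate into the partial identity \cref{cocycle-ident-for-tilde-w}; concretely I expect the computation in the proof of \cref{u-is-n-cocycle}, read backwards and specialized to an appropriate value of $t$, to supply precisely \cref{cocycle-ident-for-tilde-w}.

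The main obstacle I anticipate is the bookkeeping in this converse direction: ensuring that the function $\wtl w$ one reads off from $u$ really takes invertible values in $\A$ (not merely in $\M(\B)$ or $\cF$), and that the idempotents $1_{x\m_1}$, $1_{(x_1,\dots,x_n)}$ insert themselves correctly so that the translated $\bt^*$-action on $\U{\M(\B)}$ matches the $\af_{x_1}(1_{x\m_1}\,\cdot\,)$ appearing in \cref{cocycle-ident-for-tilde-w}. Matching the twist $\bt^*_g = \bt_g(\cdot)\bt\m_g$ on multipliers against the partial translation $\af_{x_1}(1_{x\m_1}\,\cdot\,)$ on $\A$ — that is, checking that restricting a $\beta$- (or $\bt^*$-) cocycle identity to the slice that lands in $\A$ produces the partial coboundary operator $\dl^n$ rather than the trivial one $\tl\dl^n$ — is the delicate point, and I would handle it by carefully tracking the idempotent factors through \cref{f(a)|_t=af_t-inv(1_ta)} as was done in the worked case $n=0$ in \cref{0-cocycle-is-globalizable}.
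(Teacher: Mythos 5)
Your proposal is correct and follows essentially the same route as the paper: for sufficiency it builds $u$ by \cref{u-def-n>0}, invokes \cref{u-is-n-cocycle}, verifies \cref{w-is-restr-of-u} pointwise via \cref{f(a)|_t=af_t-inv(1_ta)}, and establishes the multiplier property of $u(x_1,\dots,x_n)^{\pm 1}$ from \cref{cocycle-ident-for-tilde-w} as in the $n=0$ case; for necessity it recovers $\wtl w$ by cutting $u$ down by $\f(1_\A)$ (your ``$t=1$ slice'' reading, which the paper formalizes as $\f(\wtl w(x_1,\dots,x_n))=\f(1_\A)u(x_1,\dots,x_n)$) and derives \cref{cocycle-ident-for-tilde-w} by applying the global cocycle identity for $u$ to the idempotent $\f(1_{x_1})$ via \cref{phi(af_g(a1_g-inv))}. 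The only cosmetic difference is that the paper checks \cref{w-is-restr-of-u} using the partial cocycle identity $\dl^n w=e_{n+1}$ together with \cref{w-is-restr-of-tilde-w}, while you attribute the collapse to \cref{cocycle-ident-for-tilde-w}; these amount to the same computation after multiplying by $1_{(t\m,x_1,\dots,x_n)}$.
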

\begin{proof}
We shall assume that $n>0$, as $n=0$ was considered in \cref{0-cocycle-is-globalizable}. 

Suppose that $w\in Z^n(G,\A)$ is globalizable. Denote by $(\bt,\B)$ an enveloping action of $(\af,\A)$ and let $\bt^*$ be the corresponding  action of $G$ on $\M(\B)$ (see \cref{gamma^*}). Let $u\in Z^n(G,\U{\M(\B)})$ be a globalization of $w$ and  define $\wtl w(x_1,\dots,x_n)\in\U\A$ by 
\begin{align}\label{tilde-w-defn}
 \f(\wtl w(x_1,\dots,x_n))=\f(1_\A)u(x_1,\dots,x_n)=u(x_1,\dots,x_n)\f(1_\A).
\end{align}
Evidently, $\wtl w(x_1,\dots,x_n)\in\U\A$, as $u(x_1,\dots,x_n)$ is an invertible multiplier, and $\f(\wtl w(x_1,\dots,x_n)\m)=\f(1_\A)u\m(x_1,\dots,x_n)=u\m(x_1,\dots,x_n)\f(1_\A)$. Then \cref{w-is-restr-of-tilde-w} clearly holds by \cref{w-is-restr-of-u}, and for \cref{cocycle-ident-for-tilde-w} notice first that 
\begin{align}\label{phi(1_g)-in-terms-of-1_A}  
\f (1_g) = \bt _g (\f(1_{\A})) \f (1_{\A} ),
\end{align}
and consequently (and in fact more generally),
\begin{align}\label{phi(af_g(a1_g-inv))}
 \f(\af_g(1_{g\m}a))=\bt_g(\f(a))\f(1_\A),
\end{align}
for all $g\in G$ and $a\in\A$ (see \cite[p. 79]{DFP}). The (global) $n$-cocycle identity for $u$ is of the form
\begin{align}\label{n-cocycle-ident-for-u}
&\bt^*_{x_1}(u(x_2,\dots,x_{n+1}))
\prod_{i=1}^n u(x_1,\dots,x_ix_{i+1},\dots,x_{n+1})^{(-1)^i}\notag\\
&\cdot u(x_1,\dots,x_n)^{(-1)^{n+1}}=1_{\M(\B)}. 
\end{align}
Applying the first multiplier in \cref{n-cocycle-ident-for-u} to $\f(1_{x_1})$ and using \cref{phi(af_g(a1_g-inv)),gamma^*,tilde-w-defn}, we obtain 
\begin{align*}
\bt^*_{x_1}(u(x_2,\dots,x_{n+1}))\f(1_{x_1})&=(\bt_{x_1}u(x_2,\dots,x_{n+1})\bt\m _{x_1})(\bt_{x_1}(\f(1_\A))\f(1_\A))\\
&=(\bt_{x_1}(u(x_2,\dots,x_{n+1})\f(1_\A)))\f(1_\A)\\
&=(\bt_{x_1}[\f(\wtl w(x_2,\dots,x_{n+1}))])\f(1_\A)\\ 
&=\f(\af_{x_1}( 1_{x\m_1}\wtl w(x_2,\dots,x_{n+1}))).
\end{align*} 
Then applying both sides of \cref{n-cocycle-ident-for-u} to $\f(1_{x_1})$ and using axioms of a multiplier, we readily see that  \cref{cocycle-ident-for-tilde-w} is a consequence of \cref{n-cocycle-ident-for-u}.

Suppose now that there exists $\wtl w:G^n\to\U\A$ such that \cref{cocycle-ident-for-tilde-w,w-is-restr-of-tilde-w} hold. Let $(\beta,\B)$ be the globalization of $(\af,\A)$, with $\beta$, $\B\sst\cF=\cF(G,\A)$ and $\f:\A\to\B$ as described above. In particular, it follows from \cref{product-ideals} that for arbitrary $t,x_1,\dots,x_n$:
\begin{align}\label{phi(e_n)|_t=e_(n+1)}
 \f(1_{(x_1,\dots,x_n)})|_t=1_{(t\m,x_1,\dots,x_n)}.
\end{align}

Taking our $\wtl w$, define $u:G^n\to\U{\cF}$ by formula \cref{u-def-n>0}. We are going to show that $u$ is a globalization of $w$. By \cref{u-is-n-cocycle} one has $u\in Z^n(G,\U{\cF})$. We now check \cref{w-is-restr-of-u}. By \cref{f(a)|_t=af_t-inv(1_ta)}
$$
 \f(w(x_1,\dots,x_n))|_t=\af_{t\m}(1_tw(x_1,\dots,x_n)),
$$
 which by the partial $n$-cocycle identity for $w$ equals
\begin{align*}
 w(t\m x_1, x_2,\dots,x_n)&\prod_{i=1}^{n-1}w(t\m, x_1,\dots,x_ix_{i+1},\dots,x_n)^{(-1)^i}\\
 &\cdot w(t\m,x_1,\dots,x_{n-1})^{(-1)^n}.
\end{align*}
In view of \cref{w-is-restr-of-tilde-w,u-def-n>0,phi(e_n)|_t=e_(n+1)} the latter is
\begin{align*}
1_{(t\m,x_1,\dots,x_n)}u(x_1,\dots,x_n)|_t=\f(1_{(x_1,\dots,x_n)})|_t u(x_1,\dots,x_n)|_t,
\end{align*} 
for arbitrary $t,x_1,\dots,x_n\in G$, proving \cref{w-is-restr-of-u}. 

We proceed with a proof that $u(x_1,\dots,x_n)$ and $u(x_1,\dots,x_n)\m$ are multipliers of $\B$. Notice first that using \cref{cocycle-ident-for-tilde-w} for $(t\m, x_1,\dots,x_n)$  we obtain from \cref{u-def-n>0} that
\begin{align}  
\af_{t\m}(1_t\wtl w(x_1,\dots,x_n))&=1_{t\m}\wtl w(t\m x_1, x_2,\dots,x_n)\notag\\ 
&\quad\cdot\prod_{i=1}^{n-1}\wtl w(t\m,x_1,\dots,x_i x_{i+1},\dots,x_n)^{(-1)^i}\notag\\
&\quad\cdot\wtl w(t\m,x_1,\dots,x_{n-1})^{(-1)^n}\notag\\
&=1_{t\m}u(x_1,\dots,x_n)|_t.\label{af(1_t-inv-tilde-w)} 
\end{align} 
Then by \cref{f(a)|_t=af_t-inv(1_ta)}
\begin{align*}
 u(x_1,\dots,x_n)|_t\f(a)|_t=\af_{t\m}(1_t\wtl w(x_1,\dots,x_n))\af_{t\m}(1_ta),
\end{align*} 
so that
\begin{align}\label{u-phi(a)=phi(a-tilde-w)}
 u(x_1,\dots,x_n)\f(a)=\f(a\wtl w(x_1,\dots,x_n)),
\end{align} 
for all $x_1,\dots,x_n\in G$ and $a\in \A$. Equalities \cref{u-phi(a)=phi(a-tilde-w),u-def-n>0} readily imply 
\begin{align}\label{u-inv-phi(a)=f(a-tilde-w-inv)}
 u(x_1,\dots,x_n)\m\f(a)=\f(a\wtl w(x_1,\dots,x_n)\m).
\end{align} 
Furthermore, applying the $n$-cocycle identity for $u$ to  $(t\m, x_1,\dots x_n)$ we see that 
\begin{align*} 
\bt_{t\m}(u(x_1,\dots,x_n))\f(a)&=u(t\m x_1, x_2,\dots,x_n)\\ 
&\quad\cdot\prod_{i=1}^{n-1}u(t\m,x_1,\dots,x_ix_{i+1},\dots,x_{n+1})^{(-1)^i}\\
&\quad \cdot u(t\m, x_1,\dots,x_{n-1})^{(-1)^n}\f(a),
\end{align*} 
 which belongs to $\f(\A)$  thanks to \cref{u-phi(a)=phi(a-tilde-w),u-inv-phi(a)=f(a-tilde-w-inv)}. Thus $\bt_{t\m}(u(x_1,\dots, x_n))\f(\A)\sst\f(\A)$, which yields $u(x_1,\dots,x_n)\bt_t(\f(\A))\sst\bt_t(\f(\A))$. Since $\B=\sum_{t\in G}\bt_t(\f(\A ))$, it follows that $u(x_1,\dots,x_n)\B\sst\B$, and hence $u(x_1,\dots,x_n)\in\M(\B)$. Similarly, $u(x_1,\dots,x_n)\m\in\M(\B)$, as desired.
 
 	It remains to see that $u\in Z^n(G,\U{\M(\B)})$. Observe that $\M(\cF)\cong\cF$ and $\bt^*$ coincides with $\bt$ up to this isomorphism, so the $n$-cocycle identity for $u$ as an element of $C^n(G,\U{\M(\B)})$ reduces to the $n$-cocycle identity for $u$ as an element of $C^n(G,\U\cF)$. 
\end{proof}

Note that taking $t=1$ in \cref{af(1_t-inv-tilde-w)} we obtain $u(x_1,\dots,x_n)|_1=\wtl w(x_1,\dots,x_n)$.

\section{From \texorpdfstring{$w$}{w} to \texorpdfstring{$w'$}{w'}}\label{sec-lemmas}

Our next purpose is to show that $\wtl w$ in \cref{w-glob-iff-exists-tilde-w} exists, provided that $\A$ is a product of blocks, and we need first some technical preparation for this fact, which we do in the present section. 

Suppose that $\A=\prod_{\lb\in\Lb}\A_\lb$,  where $\A_\lb$ is an indecomposable unital ring, called a {\it block}. So far, we do not assume that $\A$ is commutative. We identify the unity element of $\A_{\mu}$, $\mu\in\Lb$, with the centrally primitive idempotent $1_{\A_\mu}$ of $\A$ which is the function $\Lb\to\bigcup_{\lb \in\Lb}\A_\lb$ whose value at $\mu$ is the identity of $\A_\mu$ and the value at any $\lb\neq\mu$ is the zero of $\A_{\lb}$. Then $\A_\mu$ is identified with the ideal of $\A$ generated by the  idempotent $1_{\A_\mu}$. Denote by $\pr_\mu$ the projection of $\A$ onto $\A_\mu$, namely, $\pr_\mu(a)=1_{\A_\mu}a$. Thus, any $a\in \A$ is identified with the set of its projections $\{\pr_\lb(a)\}_{\lb\in\Lb}$, and we write $a=\prod_{\lb\in\Lb}\pr_\lb(a)$ in this situation. If there exists $\Lb_1\sst\Lb$, such that $\pr_\lb(a)=0 _{\A}$ for all $\lb\in\Lb\setminus\Lb_1$, then we shall also write $a=\prod_{\lb\in\Lb_1}\pr_\lb(a)$, and such elements $a$ form an ideal in $\A$ which we denote by $\prod_{\lb\in\Lb_1}\A_\lb$.

Since $\A_\lb$ is indecomposable, the only central idempotents of $\A_\lb$ are $0_\A$ and $1_{\A_\lb}$. Hence, for any central idempotent $e$ of $\A$ the projection $\pr_\lb(e)$ is either $0_\A$, or $1_{\A_\lb}$. In particular, 
\begin{align}\label{eA=prod_(lb-in-Lb')A_lb}
 e\A=\prod_{\lb\in\Lb_1}\A_\lb,
\end{align}
where $\Lb_1=\{\lb\in\Lb\mid \pr_\lb(e)=1_{\A_\lb}\}$. Thus, the unital ideals of $\A$ are exactly the products of blocks $\A_\lb$ over all $\Lb_1\sst\Lb$.

\begin{lem}\label{isom-principal-ideals}
 Let $I=\prod_{\lb\in\Lb_1}\A_\lb$ and $J=\prod_{\lb\in\Lb_2}\A_\lb$ be unital ideals of $\A$ and $\varphi:I\to J$ an isomorphism. Then there exists a bijection $\s:\Lb_1\to\Lb_2$, such that $\f(\pr_\lb(a))=\pr_{\s(\lb)}(\f(a))$ for all $a\in I$ and $\lb\in\Lb_1$.
\end{lem}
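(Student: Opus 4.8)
The plan is to recover the block decomposition of $I$ and $J$ from purely ring-theoretic data — namely the primitive idempotents, which any ring isomorphism must preserve — and then read off $\s$ and the intertwining identity. Since each $\A_\lb$ is indecomposable, its only idempotents are $0_\A$ and $1_{\A_\lb}$; hence an idempotent of $I=\prod_{\lb\in\Lb_1}\A_\lb$ is exactly a family $(e_\lb)_{\lb\in\Lb_1}$ with $e_\lb\in\{0_\A,1_{\A_\lb}\}$, and so is determined by its support $S\sst\Lb_1$. First I would verify that the \emph{primitive} idempotents of $I$ (the nonzero $e$ admitting no decomposition $e=e_1+e_2$ into two nonzero orthogonal idempotents) are precisely the $1_{\A_\lb}$, $\lb\in\Lb_1$: an idempotent whose support has size $\geq 2$ splits off a single coordinate, whereas $1_{\A_\lb}$ cannot split because $\A_\lb$ has no nontrivial idempotents. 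This argument uses only one coordinate at a time, so it is insensitive to $\Lb_1$ being infinite.

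Next I would exploit that a ring isomorphism $\f:I\to J$ preserves idempotents, since $\f(e)^2=\f(e^2)=\f(e)$, and also preserves their orthogonality and sums; consequently $\f$ carries primitive idempotents bijectively onto primitive idempotents (if $\f(e)=f_1+f_2$ nontrivially then $e=\f\m(f_1)+\f\m(f_2)$ nontrivially, and symmetrically via $\f\m$). Thus for each $\lb\in\Lb_1$ the element $\f(1_{\A_\lb})$ is a primitive idempotent of $J$, hence equals $1_{\A_\mu}$ for a unique $\mu\in\Lb_2$; setting $\s(\lb)=\mu$ defines $\s:\Lb_1\to\Lb_2$. Applying the same reasoning to $\f\m$ yields the inverse map, so $\s$ is a bijection. (One may also note in passing that $\f$ is automatically unital, as $\f(1_I)$ fixes every element of $J=\f(I)$.)

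Finally, the desired identity is immediate from multiplicativity once $\s$ is available: for $a\in I$ and $\lb\in\Lb_1$,
\[
 \f(\pr_\lb(a))=\f(1_{\A_\lb}a)=\f(1_{\A_\lb})\f(a)=1_{\A_{\s(\lb)}}\f(a)=\pr_{\s(\lb)}(\f(a)),
\]
where I use $\pr_\mu(b)=1_{\A_\mu}b$ and $\f(1_{\A_\lb})=1_{\A_{\s(\lb)}}$.

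The only genuinely technical point — and hence the main obstacle — is the identification of primitive idempotents with singleton supports in the possibly infinite product, together with the observation that primitivity is an isomorphism invariant; everything else is formal. I would therefore phrase ``primitive'' in a form manifestly preserved by ring isomorphisms (no nontrivial orthogonal splitting), so that the transfer of primitive idempotents across $\f$ is transparent and the construction of $\s$ requires no further work.
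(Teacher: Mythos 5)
Your proposal is correct and follows essentially the same route as the paper's proof, which likewise identifies $\{1_{\A_\lb}\}_{\lb\in\Lb_1}$ and $\{1_{\A_\lb}\}_{\lb\in\Lb_2}$ as the indecomposable (primitive) idempotents of $I$ and $J$, observes that the isomorphism $\f$ must match them up bijectively to define $\s$, and concludes via $\f(\pr_\lb(a))=\f(1_{\A_\lb}a)=1_{\A_{\s(\lb)}}\f(a)=\pr_{\s(\lb)}(\f(a))$. You simply make explicit the details the paper leaves implicit, namely the characterization of primitive idempotents by singleton supports in the (possibly infinite) product and the invariance of primitivity under ring isomorphisms.
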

\begin{proof}
 Note that $\{1_{\A_\lb}\}_{\lb\in\Lb_1}$ and $\{1_{\A_\lb}\}_{\lb\in\Lb_2}$ are the sets of centrally primitive idempotents of $I$ and $J$, respectively. Since $\f$ is an isomorphism, $\f(1_{\A_\lb})=1_{\A_{\s(\lb)}}$ for some bijection $\s:\Lb_1\to\Lb_2$. Then 
 $$
  \f(\pr_\lb(a))=\f(1_{\A_\lb}a)=1_{\A_{\s(\lb)}}\f(a)=\pr_{\s(\lb)}(\f(a)).
 $$
\end{proof}

Let $\alpha=\{\alpha_x:\cD_{x\m}\to\cD_x\mid x\in G\}$ be a unital partial action of $G$ on $\A$. By the observation above each ideal $\cD_x$ is a product of  blocks, and  $\alpha_x$ maps a block of $\cD_{x\m}$ onto some block of $\cD_x.$   As in~\cite{DES2} we call $\alpha$ {\it transitive}, when for any pair $\lb',\lb''\in\Lb$ there exists $x\in G$, such that $\A_{\lb'}\sst\cD_{x\m}$ and $\alpha_x(\A_{\lb'})=\A_{\lb''}\sst\cD_x$. 

In all what follows, if otherwise is not stated, we assume that $\alpha$ is transitive. Then we may fix $\lb_0\in\Lb$, so that each $\A_\lb$ is $\alpha_x(\A_{\lb_0})$ for some $x\in G$ with $\A_{\lb_0}\sst\cD_{x\m}$. Observe that, whenever $\A_{\lb_0}\sst\cD_{x\m}\cD_{x'\m}$ and $\alpha_x(\A_{\lb_0})=\alpha_{x'}(\A_{\lb_0})$, it follows that 
$\A_{\lb_0}\sst\cD_{ (x')\m x}$ and $\alpha_{x\m x'}(\A_{\lb_0})=\A_{\lb_0}$. Hence, introducing as in~\cite{DES2} the subgroup
$$
 H=\{x\in G\mid \A_{\lb_0}\sst\cD_{x\m}\mbox{ and }\alpha_x(\A_{\lb_0})=\A_{\lb_0}\}
$$
and choosing a left transversal $\Lb'$ of $H$ in $G$, one may identify $\Lb$ with a subset of $\Lb'$, namely, $\lb\in\Lb$ corresponds to (a unique) $g\in\Lb'$, such that $\A_{\lb_0}\sst\cD_{g\m}$ and $\alpha_g(\A_{\lb_0})=\A_{\lb}$. Assume, moreover, that $\Lb'$ contains the identity element $1$ of $G$. Then $\lb_0$ is identified with $1$ and thus
\begin{align}\label{A_g=af(A_1)}
\A_g=\alpha_g(\A_1)\mbox{ for }g\in\Lb\sst\Lb'.
\end{align}

Given $x\in G$, we use the notation $\bar x$ from~\cite{DES2} for the element of $\Lb'$ such that $x\in\bar xH$. We recall the following useful fact.
\begin{lem}[Lemma 5.1 from~\cite{DES2}]\label{Lb-and-Lb'}
 Given $x\in G$ and $g\in\Lb'$, one has
 \begin{enumerate}
  \item $g\in\Lb\iff \A_1\sst\cD_{g\m}$;\label{g-in-Lb-iff-A_1-in-D_g-inv}
  \item if $g\in\Lb$, then $\ol{xg}\in\Lb\iff \A_g\sst\cD_{x\m}$, and in this situation $\af_x(\A_g)=\A_{\ol{xg}}$.\label{ol-xg-in-Lb-iff-A_g-in-D_x-inv}
 \end{enumerate}
\end{lem}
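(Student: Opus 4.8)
The plan is to deduce both parts from a single coset-invariance observation together with the partial-action axioms \cref{af(D_g-inv-cap-D_h),af_g-circ-af_h} (the statement is \cite[Lemma 5.1]{DES2}, but a direct argument is short). The key preliminary fact I would establish is that, for any $y\in G$ with $\A_1\sst\cD_{y\m}$ and any $h\in H$, one has $\A_1\sst\cD_{(yh)\m}$ and $\af_{yh}(\A_1)=\af_y(\A_1)$. Indeed, $h\in H$ means $\A_1\sst\cD_{h\m}$ and $\af_h(\A_1)=\A_1\sst\cD_{y\m}$, so for $a\in\A_1$ both $\af_h(a)$ and $\af_y(\af_h(a))$ exist; the composition axiom \cref{af_g-circ-af_h} then gives that $\af_{yh}(a)$ exists and equals $\af_y(\af_h(a))$, and letting $a$ range over $\A_1$ yields the claim. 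Hence both the validity of $\A_1\sst\cD_{y\m}$ and the resulting block $\af_y(\A_1)$ depend only on the coset $yH$; combined with the remark preceding the lemma, this confirms that the identification $\Lb\hookrightarrow\Lb'$ is well defined and that every $g\in\Lb$ satisfies $\A_1\sst\cD_{g\m}$.

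Part \cref{g-in-Lb-iff-A_1-in-D_g-inv} then follows quickly: the forward implication is the definition of the identification, while if $g\in\Lb'$ has $\A_1\sst\cD_{g\m}$, then (since $\af_g$ sends blocks to blocks) $\af_g(\A_1)$ is some block $\A_\lb$, so $g$ lies in the coset of the representative of $\lb$ and, being an element of $\Lb'$, must itself be that representative, i.e. $g\in\Lb$.

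For part \cref{ol-xg-in-Lb-iff-A_g-in-D_x-inv} fix $g\in\Lb$, so $\A_1\sst\cD_{g\m}$ and $\A_g=\af_g(\A_1)$. If $\A_g\sst\cD_{x\m}$, then for $a\in\A_1$ the elements $\af_g(a)\in\A_g$ and $\af_x(\af_g(a))$ exist, so \cref{af_g-circ-af_h} gives $\A_1\sst\cD_{(xg)\m}$ with $\af_{xg}(\A_1)=\af_x(\A_g)$; coset-invariance and part \cref{g-in-Lb-iff-A_1-in-D_g-inv} then show $\ol{xg}\in\Lb$ and $\A_{\ol{xg}}=\af_{\ol{xg}}(\A_1)=\af_{xg}(\A_1)=\af_x(\A_g)$. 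I expect the reverse implication to be the one real obstacle, because the composition axiom points the wrong way; the remedy is to invoke \cref{af(D_g-inv-cap-D_h)} directly. Assuming $\ol{xg}\in\Lb$, part \cref{g-in-Lb-iff-A_1-in-D_g-inv} and coset-invariance give $\A_1\sst\cD_{(xg)\m}$, hence $\A_1\sst\cD_{g\m}\cap\cD_{(xg)\m}$, and applying $\af_g$ together with \cref{af(D_g-inv-cap-D_h)} (noting $g(xg)\m=x\m$) yields
\[
\A_g=\af_g(\A_1)\sst\af_g\bigl(\cD_{g\m}\cap\cD_{(xg)\m}\bigr)=\cD_g\cap\cD_{x\m}\sst\cD_{x\m}.
\]
The asserted equality $\af_x(\A_g)=\A_{\ol{xg}}$ then follows from the case already treated.
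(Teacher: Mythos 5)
Your proof is correct. Note, however, that this paper does not actually prove the statement: it is imported verbatim as Lemma~5.1 from~\cite{DES2}, so there is no in-paper proof to compare against; your argument is a self-contained derivation from the axioms of \cref{def:ParAc} and the setup preceding \cref{Lb-and-Lb'}, and every step checks out. Your preliminary coset-invariance claim is sound: axiom (iv) gives $\A_1\sst\cD_{(yh)\m}$ and $\af_{yh}(\A_1)=\af_y(\af_h(\A_1))=\af_y(\A_1)$ for $h\in H$, and since $H$ is a group the claim applied with $h\m$ yields the converse, so both conditions genuinely depend only on the coset $yH$; combined with the uniqueness observation stated just before the subgroup $H$ is introduced, this settles part \cref{g-in-Lb-iff-A_1-in-D_g-inv} (the fact that $\af_g$ carries blocks to blocks is justified by \cref{isom-principal-ideals}, as the paper notes). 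You also correctly identified the one genuinely nontrivial point: in the reverse implication of part \cref{ol-xg-in-Lb-iff-A_g-in-D_x-inv} the composition axiom runs in the wrong direction, and your use of \cref{af(D_g-inv-cap-D_h)} with $h=(xg)\m$, giving $\af_g(\cD_{g\m}\cap\cD_{(xg)\m})=\cD_g\cap\cD_{x\m}$ since $g(xg)\m=x\m$, is exactly the right remedy; the remaining equality $\af_x(\A_g)=\A_{\ol{xg}}$ then follows from the forward direction together with \cref{A_g=af(A_1)}, as you say.
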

 Notice that taking $g=1$ in \cref{ol-xg-in-Lb-iff-A_g-in-D_x-inv}, one gets $\ol x\in\Lb\iff\A_1\sst\D_{x\m}$. Then using \cref{ol-xg-in-Lb-iff-A_g-in-D_x-inv} once again, we see that for any $g\in\Lb$
\begin{align}\label{A_g-sst-D_x-iff-A_1-sst-D_g-inv-x}
 \A_g\sst\cD_x\iff\ol{x\m g}\in\Lb\iff\A_1\sst\D_{g\m x}.
\end{align}
In particular, $\A_{\ol{x}}\sst\D_x$ for all $x\in G$, such that $\ol{x}\in\Lb$.

For any $g\in\Lb$ and $a\in \A$ define
\begin{align}\label{0_g-def}
 \0_g(a)=\alpha_g(\pr_1(a)).
\end{align}
Note that by \cref{g-in-Lb-iff-A_1-in-D_g-inv} of \cref{Lb-and-Lb'} the block $\A_1$ is a subset of $\cD_{g\m}$, so $\alpha_g(\pr_1(a))$ makes sense and belongs to $\A_g$. Thus, $\0_g$ is a correctly defined homomorphism\footnote{Observe that this $\0$ differs from the one introduced in~\cite{DES2}. More precisely, denoting $\0$ from~\cite{DES2} by $\0'$, we may write $\0'_g(a)=\0_{g\m}(a)+1_\A-1_{\A_{g\m}}$ for $g\m\in\Lb$.} $\A\to \A_g$. Clearly, 
\begin{align}\label{0_g(a)=0_g(1_xa)}
 \0_g(a)=\0_g(1_xa) 
\end{align}
for any $x\in G$, such that $\A_1\sst\cD_x$. In particular, this holds for $x\in H$ and for $x=g\m$.

Observe also that 
\begin{align}\label{0_g=pr_g-circ-alpha_g}
 \0_g(a)=\0_g(1_{g\m}a)=\pr_g(\alpha_g(1_{g\m}a)) 
\end{align}
in view of \cref{isom-principal-ideals}. It follows that $\0_g(\alpha_{g\m}(1_ga))=\pr_g(1_ga)=\pr_g(a)$, as $\A_g\sst\cD_g$. Therefore,
\begin{align}\label{a=prod-0_g-circ-af_g-inv}
 a=\prod_{g\in\Lb}\0_g(\alpha_{g\m}(1_ga)).
\end{align}

In what follows in this section, we assume $\A$ to be commutative, so that $(\A,\af)$ is a partial $G$-module.
\begin{lem}\label{w=prod-0_g}
 Let $n>0$ and $w\in Z^n(G,\A)$. Then
 \begin{align}
  w(x_1,\dots,x_n)&=\prod_{g\in\Lb}\0_g[w(g\m x_1,x_2,\dots,x_n)\notag\\
 &\quad \quad \cdot\prod_{k=1}^{n-1}w(g\m,x_1,\dots,x_kx_{k+1},\dots,x_n)^{(-1)^k}\notag\\
 &\quad \quad\cdot w(g\m,x_1,\dots,x_{n-1})^{(-1)^n}].\label{w=prod_g-0_g-of-w}
 \end{align}
\end{lem}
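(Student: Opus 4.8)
The plan is to combine the decomposition \eqref{a=prod-0_g-circ-af_g-inv} with the partial $n$-cocycle identity for $w$. Write $B_g$ for the expression appearing inside the brackets $\0_g[\,\cdots\,]$ on the right-hand side of \eqref{w=prod_g-0_g-of-w}. Applying \eqref{a=prod-0_g-circ-af_g-inv} to the element $w(x_1,\dots,x_n)\in\A$ gives
\begin{align*}
 w(x_1,\dots,x_n)=\prod_{g\in\Lb}\0_g\bigl(\af_{g\m}(1_g w(x_1,\dots,x_n))\bigr),
\end{align*}
so it suffices to prove, for each fixed $g\in\Lb$, that $\0_g(\af_{g\m}(1_g w(x_1,\dots,x_n)))=\0_g(B_g)$.

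First I would rewrite $\af_{g\m}(1_g w(x_1,\dots,x_n))$ by means of the cocycle condition. This is exactly the computation already performed in the proof of \cref{w-glob-iff-exists-tilde-w}, with the $t\m$ there specialized to $g\m$ (that is, $t=g$): evaluating $\dl^n w=e_{n+1}$ at the $(n+1)$-tuple $(g\m,x_1,\dots,x_n)$ and solving for the leading factor yields
\begin{align*}
 \af_{g\m}(1_g w(x_1,\dots,x_n))=1_{(g\m,x_1,\dots,x_n)}\,B_g.
\end{align*}
Here the bookkeeping of signs and index shifts is identical to that in \cref{w-glob-iff-exists-tilde-w}, so no new work is needed for this step.

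It then remains to apply the homomorphism $\0_g$ and dispose of the idempotent $1_{(g\m,x_1,\dots,x_n)}=1_{g\m}1_{g\m x_1}\cdots 1_{g\m x_1\cdots x_n}$. The key observation is that the leading factor $w(g\m x_1,x_2,\dots,x_n)$ of $B_g$ lies in $\D_{(g\m x_1,x_2,\dots,x_n)}=\D_{g\m x_1}\D_{g\m x_1x_2}\cdots\D_{g\m x_1\cdots x_n}$, hence so does $B_g$; therefore the subproduct $1_{g\m x_1}\cdots 1_{g\m x_1\cdots x_n}$, being the identity of this ideal, fixes $B_g$, and we get $1_{(g\m,x_1,\dots,x_n)}B_g=1_{g\m}B_g$. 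Since $g\in\Lb$ forces $\A_1\sst\D_{g\m}$ by \cref{g-in-Lb-iff-A_1-in-D_g-inv} of \cref{Lb-and-Lb'}, I may invoke \eqref{0_g(a)=0_g(1_xa)} with $x=g\m$ to drop the last factor $1_{g\m}$, so that
\begin{align*}
 \0_g\bigl(\af_{g\m}(1_g w(x_1,\dots,x_n))\bigr)=\0_g(1_{g\m}B_g)=\0_g(B_g).
\end{align*}
Taking the product over $g\in\Lb$ then yields \eqref{w=prod_g-0_g-of-w}.

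I expect the last step to be the main obstacle: one must ensure that the idempotent factor $1_{(g\m,x_1,\dots,x_n)}$ produced by the cocycle identity can be removed without altering $\0_g(B_g)$. The support argument above --- that $B_g$ already lies in $\D_{g\m x_1}\cdots\D_{g\m x_1\cdots x_n}$ because of its leading factor, together with $\A_1\sst\D_{g\m}$ --- is precisely what renders this factor harmless. Everything else is a direct substitution.
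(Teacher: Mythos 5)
Your proof is correct and follows essentially the same route as the paper: both apply the decomposition \cref{a=prod-0_g-circ-af_g-inv} to $w(x_1,\dots,x_n)$ and then solve the cocycle identity $(\dl^n w)(g\m,x_1,\dots,x_n)=1_{(g\m,x_1,\dots,x_n)}$ for the leading factor $\af_{g\m}(1_g w(x_1,\dots,x_n))$. The only difference is bookkeeping: the paper drops the idempotent $1_{(g\m,x_1,\dots,x_n)}$ silently (it is redundant, since $B_g$ already lies in $\U{\D_{(g\m,x_1,\dots,x_n)}}$ --- note the factors $w(g\m,\dots)^{\pm1}$ supply the missing $\D_{g\m}$ component, so in fact $1_{(g\m,x_1,\dots,x_n)}B_g=B_g$), whereas you remove it explicitly via the support of the leading factor together with \cref{0_g(a)=0_g(1_xa)}, which is a harmless and slightly more careful variant of the same argument.
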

\begin{proof}
 By \cref{a=prod-0_g-circ-af_g-inv}
$$
 w(x_1,\dots,x_n)=\prod_{g\in\Lb}\0_g(\alpha_{g\m}(1_gw(x_1,\dots,x_n))).
$$
As $w\in Z^n(G,\A)$, one has
\begin{align*}
  1_{(g\m,x_1,\dots,x_n)}&=(\dl^nw)(g\m,x_1\dots,x_n)\\
 &=\alpha_{g\m}(1_gw(x_1,\dots,x_n))w(g\m x_1,x_2,\dots,x_n)^{-1}\\
 &\quad\cdot\prod_{k=1}^{n-1}w(g\m,x_1,\dots,x_kx_{k+1},\dots,x_n)^{(-1)^{k-1}}\\
 &\quad\cdot w(g\m,x_1,\dots,x_{n-1})^{(-1)^{n-1}}.
\end{align*}
Hence,
\begin{align*}
 \alpha_{g\m}(1_gw(x_1,\dots,x_n))&=w(g\m x_1,x_2,\dots,x_n)\\
 &\quad \cdot\prod_{k=1}^{n-1}w(g\m,x_1,\dots,x_kx_{k+1},\dots,x_n)^{(-1)^k}\\
 &\quad\cdot w(g\m,x_1,\dots,x_{n-1})^{(-1)^n}.
\end{align*}
\end{proof}

Given $x\in G$, denote by $\eta(x)$ the element $x\m\bar x\in H$. Let $n>0$ and $g\in\Lb'$. Define $\eta_n^g:G^n\to H$ by
\begin{align}\label{eta_n^g-def}
 \eta_n^g(x_1,\dots,x_n)=\eta(x\m_n \ol{x\m_{n-1}\dots x\m_1 g})
\end{align}
and $\tau_n^g:G^n\to H^n$ by
\begin{align}\label{tau_n^g-def}
\tau_n^g(x_1,\dots,x_n)=(\eta_1^g(x_1),\eta_2^g(x_1,x_2),\dots,\eta_n^g(x_1,\dots,x_n)). 
\end{align}
 Observe that 
\begin{align}\label{prod-of-etas}
 \eta_1^g(x_1)\eta_2^g(x_1,x_2)\dots\eta_n^g(x_1,\dots,x_n)=\eta(x\m_n\dots x\m_1g)=\eta_1^g(x_1\dots x_n).
\end{align}
 
We shall also need the functions $\s_{n,i}^g:G^n\to G^{n+1}$, $0\le i\le n$, defined by
\begin{align}
 \s_{n,0}^g(x_1,\dots,x_n)&=(g\m,x_1,\dots,x_n),\label{sigma_n0-def}\\
 \s_{n,i}^g(x_1,\dots,x_n)&=(\tau_i^g(x_1,\dots,x_i),(\ol{x\m_i\dots x\m_1 g})\m,x_{i+1},\dots,x_n),\ \ 0<i<n,\label{sigma_ni-def}\\
 \s_{n,n}^g(x_1,\dots,x_n)&=(\tau_{n}^g(x_1,\dots,x_{n}),(\ol{x\m_n\dots x\m_1 g})\m).\label{sigma_nn-def}
\end{align}
In the formulas above we may allow $n$ to be equal to zero, meaning that $\s_{0,0}^g=g\m\in G$.

\begin{defn}\label{defn-of-w'-and-eps}
	 With any $n>0$ and $w\in C^n(G,\A)$ we shall associate 
	\begin{align}
	w'(x_1,\dots,x_n)&=1_{(x_1,\dots,x_n)}\prod_{g\in\Lb}\0_g\circ w\circ\tau_n^g(x_1,\dots,x_n),\label{w'-def}\\
	\e(x_1,\dots,x_{n-1})&=1_{(x_1,\dots,x_{n-1})}\prod_{g\in\Lb}\0_g\left(\prod_{i=0}^{n-1}w\circ\s_{n-1,i}^g(x_1,\dots,x_{n-1})^{(-1)^i}\right).\label{eps-def}
	\end{align}
\end{defn}
 
\begin{lem}\label{w'-and-e-are-cochains}
  Let $n>0$, $w\in C^n(G,\A)$ and $w',\e$ be as in \cref{defn-of-w'-and-eps}. Then $w'\in C^n(G,\A)$ and $\e\in C^{n-1}(G,\A)$.
\end{lem}
\begin{proof}
  Notice by \cref{tau_n^g-def,prod-of-etas}  that
$$
 w\circ\tau_n^g(x_1,\dots,x_n)\in\U{\cD_{\eta(x\m_1 g)}\cD_{\eta(x\m_2 x\m_1 g)}\dots\cD_{\eta(x\m_n\dots x\m_1 g)}}.
$$
Since $\eta(x\m_k\dots x\m_1 g)\in H$, then $\A_1\sst\cD_{\eta(x\m_k\dots x\m_1 g)}$, $1\le k\le n$, so 
$$
 \pr_1\circ w\circ\tau_n^g(x_1,\dots,x_n)\in\U{\A_1}
$$
and hence by \cref{0_g-def}
$$
 \0_g\circ w\circ\tau_n^g(x_1,\dots,x_n)=\alpha_g\circ\pr_1\circ w\circ\tau_n^g(x_1,\dots,x_n)\in\U{\A_g}.
$$
Therefore, the product of the values of $\0_g$ on the right-hand side of \cref{w'-def} belongs to $\U\A$ and thus $w'\in C^n(G,\A)$.

To prove that $\e\in C^{n-1}(G,\A)$ for $n>1,$ observe first that the right-hand side of \cref{eps-def} depends only on $\0_g$ with $g\in\Lb$ satisfying 
\begin{align}\label{A_g-in-D_(x_1_dots_x_n)}
 \A_g\sst\cD_{(x_1,\dots,x_{n-1})}
\end{align}
(if there is no such $g$, then $\cD_{(x_1,\dots,x_{n-1})}$ is zero and thus $\e(x_1,\dots,x_{n-1})$ is automatically invertible in this ideal). Now 
$$
 \prod_{i=0}^{n-1}w\circ\s_{n-1,i}^g(x_1,\dots,x_{n-1})\in\U{\cD_{(g\m,x_1,\dots,x_{n-1})}\cD_{\eta(x\m_1 g)}\dots\cD_{\eta(x\m_{n-1}\dots x\m_1 g)}}.
$$
 As above, $\A_1\sst\cD_{\eta(x\m_k\dots x\m_1 g)}$, $1\le k\le n-1$, because $\eta(x\m_k\dots x\m_1 g)\in H$. Moreover, by \cref{A_g-sst-D_x-iff-A_1-sst-D_g-inv-x} condition \cref{A_g-in-D_(x_1_dots_x_n)} is equivalent to $\A_1\sst\D_{(g\m,x_1,\dots,x_{n-1})}$. The rest of the proof now follows as for $w'$. If $n=1$, then  
\begin{align}\label{e-for-n=1}
 \e=\prod_{g\in\Lb}\0_g(w(g\m))\in\U\A,
\end{align}
as $\cD_{g\m}\supseteq \A_1$ by \cref{g-in-Lb-iff-A_1-in-D_g-inv} of \cref{Lb-and-Lb'}.
\end{proof}

The following notation will be used in the results below.
\begin{align}
 \Pi(l,m)&=\prod_{k=l,i=m}^{n-1}w\circ\s_{n-1,i}^g(x_1,\dots,x_kx_{k+1},\dots,x_n)^{(-1)^{k+i}}\notag\\
 &\quad\cdot \prod_{i=m}^{n-1}w\circ\s_{n-1,i}^g(x_1,\dots,x_{n-1})^{(-1)^{n+i}},\label{Pi(l_m)-def}
\end{align}
 where $1\le l\le n-1$ and $0\le m\le n-1$. 

\begin{lem}\label{w'-cohom-w-base-0}
 For all $w\in Z^1(G,\A)$ and  $x\in G$ we have:
 \begin{align}\label{delta-e-alpha-inv-w-inv-n=1}
  (\dl^0\e)(x)\alpha_x(1_{x\m}\e)\m w(x)\m= \prod_{g\in\Lb}\0_g(w(g\m x)\m).  
 \end{align}
 Moreover, for $n>1$, $w\in Z^n(G,\A)$ and $x_1,\dots,x_n\in G$:
 \begin{align}
  &(\dl^{n-1}\e)(x_1,\dots,x_n)\alpha_{x_1}(1_{x\m_1}\e(x_2,\dots,x_n))\m w(x_1,\dots,x_n)\m\notag\\
  &= 1_{(x_1,\dots,x_n)}\prod_{g\in\Lb}\0_g(w(g\m x_1,x_2,\dots,x_n)\m\Pi(1,1)).\label{delta-e-alpha-inv-w-inv}
 \end{align}
\end{lem}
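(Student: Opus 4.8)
The plan is to reduce both identities, after a single cancellation, to a computation carried out inside the block homomorphisms $\0_g$, and then to feed in \cref{w=prod_g-0_g-of-w}. Throughout I use that $\A$ is a product of the orthogonal blocks $\A_g$ ($g\in\Lb$) and that each $\0_g$ is a multiplicative homomorphism $\A\to\A_g$ by \cref{0_g-def}, so that a product $\prod_{g\in\Lb}\0_g(\,\cdot\,)$ behaves componentwise and $\0_g(c)\m=\0_g(c\m)$ whenever $\pr_1(c)$ is invertible in $\A_1$.

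I would first settle the base case $n=1$. Here $\e\in\U\A$ is a $0$-cochain, so by \cref{delta^0-def} one has $(\dl^0\e)(x)=\af_x(1_{x\m}\e)\e\m$; multiplying by $\af_x(1_{x\m}\e)\m$ removes the first factor and the surviving idempotent $1_x$ is absorbed by $w(x)\m\in\U{\D_x}$, leaving $\e\m w(x)\m$. Expanding $\e$ by \cref{e-for-n=1} and, via \cref{w=prod_g-0_g-of-w} with $n=1$ (empty middle product, last factor $w(g\m)\m$), writing $w(x)\m=\prod_{g\in\Lb}\0_g(w(g\m)w(g\m x)\m)$, the two products over $\Lb$ multiply componentwise to $\prod_{g\in\Lb}\0_g(w(g\m)\m w(g\m)w(g\m x)\m)=\prod_{g\in\Lb}\0_g(w(g\m x)\m)$, which is \cref{delta-e-alpha-inv-w-inv-n=1}.

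For $n>1$ I would start from \cref{delta^n-for-n>0} applied to $\dl^{n-1}\e$: its leading factor is exactly $\af_{x_1}(1_{x\m_1}\e(x_2,\dots,x_n))$, so multiplication by its inverse deletes it (the residual idempotent $1_{(x_1,\dots,x_n)}$ being absorbed later by $w(x_1,\dots,x_n)\m$) and leaves $\prod_{i=1}^{n-1}\e(x_1,\dots,x_ix_{i+1},\dots,x_n)^{(-1)^i}\e(x_1,\dots,x_{n-1})^{(-1)^n}$. Substituting the defining formula \cref{eps-def} for every $\e$-factor and collapsing by the homomorphism and orthogonality properties of the $\0_g$, this becomes $\prod_{g\in\Lb}\0_g(P_g)$ with $P_g=\prod_{i=1}^{n-1}\big(\prod_{j=0}^{n-1}w\circ\s_{n-1,j}^g(x_1,\dots,x_ix_{i+1},\dots,x_n)^{(-1)^j}\big)^{(-1)^i}\big(\prod_{j=0}^{n-1}w\circ\s_{n-1,j}^g(x_1,\dots,x_{n-1})^{(-1)^j}\big)^{(-1)^n}$. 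Comparing with \cref{Pi(l_m)-def}, the part of $P_g$ with $j\ge1$ is precisely $\Pi(1,1)$, while its $j=0$ part, using $\s_{n-1,0}^g=(g\m,-)$ from \cref{sigma_n0-def}, equals $\prod_{i=1}^{n-1}w(g\m,x_1,\dots,x_ix_{i+1},\dots,x_n)^{(-1)^i}w(g\m,x_1,\dots,x_{n-1})^{(-1)^n}$.

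Finally I would incorporate the outstanding factor $w(x_1,\dots,x_n)\m$ by inverting \cref{w=prod_g-0_g-of-w}, writing it as $\prod_{g\in\Lb}\0_g(R_g)$ with $R_g=w(g\m x_1,x_2,\dots,x_n)\m\prod_{k=1}^{n-1}w(g\m,x_1,\dots,x_kx_{k+1},\dots,x_n)^{(-1)^{k+1}}w(g\m,x_1,\dots,x_{n-1})^{(-1)^{n+1}}$. Multiplying $\prod_{g\in\Lb}\0_g(P_g)$ by $\prod_{g\in\Lb}\0_g(R_g)$ componentwise, the leading $w(g\m x_1,x_2,\dots,x_n)\m$ of $R_g$ is exactly the factor demanded by \cref{delta-e-alpha-inv-w-inv}, and the remaining factors of $R_g$ annihilate the $j=0$ part of $P_g$ term by term, since $(-1)^{k+1}$ and $(-1)^{n+1}$ are the negatives of $(-1)^i$ and $(-1)^n$; what survives inside $\0_g$ is $w(g\m x_1,x_2,\dots,x_n)\m\Pi(1,1)$, giving the claim. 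I expect the only delicate point to be the bookkeeping of the idempotent supports — justifying the first cancellation and the identities $\0_g(c)\m=\0_g(c\m)$ — which is legitimate because every argument fed to $\0_g$ lies in an ideal ($\D_{g\m}$, or some $\D_{\eta(\cdots)}$ with $\eta(\cdots)\in H$) containing $\A_1$, by \cref{g-in-Lb-iff-A_1-in-D_g-inv} of \cref{Lb-and-Lb'}, so that $\pr_1$ of it is invertible in $\A_1$ and $\0_g$ restricts to a group homomorphism into $\U{\A_g}$.
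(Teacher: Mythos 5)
Your proof is correct and follows essentially the same route as the paper's: after cancelling the leading factor $\af_{x_1}(1_{x\m_1}\e(x_2,\dots,x_n))$, the substitution of \cref{eps-def} yields exactly $\prod_{g\in\Lb}\0_g(\Pi(1,0))$, and your cancellation of the $j=0$ part of $P_g$ against the factors of $R_g$ is just the paper's identification of the $i=0$ factors of $\Pi(1,0)$ with the middle and last factors of \cref{w=prod_g-0_g-of-w}, performed with $w(x_1,\dots,x_n)\m$ pushed inside the product over $g$ rather than with $w(x_1,\dots,x_n)$ factored out. Your explicit justification of the support bookkeeping (invertibility of $\pr_1$ of the arguments via \cref{g-in-Lb-iff-A_1-in-D_g-inv} of \cref{Lb-and-Lb'}) is a point the paper leaves implicit, having settled it in \cref{w'-and-e-are-cochains}.
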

\begin{proof}
 Indeed, by \cref{delta^0-def,w=prod_g-0_g-of-w,e-for-n=1} we see that
 \begin{align*}
 (\dl^0\e)(x)\alpha_x(1_{x\m}\e)\m w(x)\m&= \e\m w(x)\m\\ 
 &= \prod_{g\in\Lb} \0_g (w(g\m)\m w(g\m x)\m w(g\m))\\
 &= \prod_{g\in\Lb}\0_g (w(g\m x)\m).
 \end{align*}

For \cref{delta-e-alpha-inv-w-inv} observe from \cref{eps-def,Pi(l_m)-def,delta^n-for-n>0} that
 \begin{align*}
  &(\dl^{n-1}\e)(x_1,\dots,x_n)\alpha_{x_1}(1_{x\m_1}\e(x_2,\dots,x_n))\m\\
  &= \left(\prod_{k=1}^{n-1}\e(x_1,\dots,x_kx_{k+1},\dots,x_n)^{(-1)^k}\right)\e(x_1,\dots,x_{n-1})^{(-1)^n}\\
  &= \prod_{g\in\Lb}\0_g(\Pi(1,0)).
 \end{align*}
Now in \cref{w=prod_g-0_g-of-w} one has
\begin{align*}
 w(g\m,x_1,\dots,x_kx_{k+1},\dots,x_n)^{(-1)^k}&=w\circ\s_{n-1,0}^g(x_1,\dots,x_kx_{k+1},\dots,x_n)^{(-1)^{k+0}},\\
 w(g\m,x_1,\dots,x_{n-1})^{(-1)^n}&=w\circ\s_{n-1,0}^g(x_1,\dots,x_{n-1})^{(-1)^{n+0}},
\end{align*}
 which are the factors of $\Pi(1,0)$ corresponding to $i=0$ and $1\le k\le n-1$. Hence,
$$
 \prod_{g\in\Lb}\0_g(\Pi(1,0))=w(x_1,\dots,x_n)\prod_{g\in\Lb}\0_g(w(g\m x_1,x_2,\dots,x_n)\m\Pi(1,1)).
$$
\end{proof}

\begin{lem}\label{w'-cohom-w-base-1}
 For all $n>1$, $w\in Z^n(G,\A)$, $g\in\Lb$ and $x_1,\dots,x_n\in G$:
 \begin{align}
  w(g\m x_1,x_2,\dots,x_n)\m\Pi(1,1)&=\alpha_{\eta_1^g(x_1)}(1_{\eta_1^g(x_1)\m}w\circ\s_{n-1,0}^{\ol{x\m_1 g}}(x_2,\dots,x_n))\m\notag\\
  &\quad\cdot w(\tau_1^g(x_1),(\ol{x\m_1 g})\m x_2,x_3,\dots,x_n)\m\Pi(2,2)\notag\\
  &\quad\cdot \prod_{i=1}^{n-1}w\circ\s_{n-1,i}^g(x_1x_2,x_3,\dots,x_n)^{(-1)^{i+1}}.\label{w-Pi(1_1)}
 \end{align}
\end{lem}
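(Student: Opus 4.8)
The plan is to apply the $n$-cocycle identity for $w$ to a single well-chosen $(n+1)$-tuple and then to match the resulting factors against three natural pieces of $\Pi(1,1)$. Throughout I write $h=\ol{x\m_1 g}$ and $\zeta=\eta_1^g(x_1)=\tau_1^g(x_1)$. Since $g\in\Lb\sst\Lb'$ we have $\ol g=g$, so by \cref{eta_n^g-def}
\begin{align*}
 \zeta=\eta(x\m_1 g)=(x\m_1 g)\m\,\ol{x\m_1 g}=g\m x_1 h\in H,
\end{align*}
which gives the key simplification $\zeta h\m=g\m x_1$. Directly from \cref{sigma_ni-def,sigma_nn-def} I record that, for any $z_2,\dots,z_{n-1}\in G$,
\begin{align*}
 \s_{n-1,1}^g(x_1,z_2,\dots,z_{n-1})=(\zeta,h\m,z_2,\dots,z_{n-1}),
\end{align*}
and from \cref{sigma_n0-def} that $w\circ\s_{n-1,0}^h(x_2,\dots,x_n)=w(h\m,x_2,\dots,x_n)$.

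Next I would regroup $\Pi(1,1)$ combinatorially. Splitting the double product in \cref{Pi(l_m)-def} according to $k=1$ versus $k\ge2$, and then splitting the $k\ge2$ factors and the ``drop-last'' factors according to $i=1$ versus $i\ge2$, one gets
\begin{align*}
 \Pi(1,1)=\left(\prod_{i=1}^{n-1}w\circ\s_{n-1,i}^g(x_1x_2,x_3,\dots,x_n)^{(-1)^{i+1}}\right)P\,\Pi(2,2),
\end{align*}
where the first factor is exactly the last product on the right-hand side of \cref{w-Pi(1_1)}, and
\begin{align*}
 P=\prod_{k=2}^{n-1}w\circ\s_{n-1,1}^g(x_1,\dots,x_kx_{k+1},\dots,x_n)^{(-1)^{k+1}}\,w\circ\s_{n-1,1}^g(x_1,\dots,x_{n-1})^{(-1)^{n+1}}
\end{align*}
gathers the $i=1$ factors with $k\ge2$ together with the $i=1$ ``drop-last'' factor. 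As the $k=1$ slice and $\Pi(2,2)$ already appear unchanged on the right-hand side of \cref{w-Pi(1_1)}, it then suffices to prove
\begin{align}\label{plan-core-ident}
 w(g\m x_1,x_2,\dots,x_n)\m\,P=\alpha_\zeta(1_{\zeta\m}w\circ\s_{n-1,0}^h(x_2,\dots,x_n))\m\,w(\zeta,h\m x_2,x_3,\dots,x_n)\m.
\end{align}

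To establish \cref{plan-core-ident} I would write out the $n$-cocycle identity $\dl^n w=e_{n+1}$ (see \cref{delta^n-for-n>0}) evaluated at the $(n+1)$-tuple $(\zeta,h\m,x_2,x_3,\dots,x_n)$. Using the first paragraph, its leading factor is $\alpha_\zeta(1_{\zeta\m}w\circ\s_{n-1,0}^h(x_2,\dots,x_n))$; its $i=1$ factor is $w(\zeta h\m,x_2,\dots,x_n)\m=w(g\m x_1,x_2,\dots,x_n)\m$ by $\zeta h\m=g\m x_1$; its $i=2$ factor is $w(\zeta,h\m x_2,x_3,\dots,x_n)$; and its factors with index $3\le j\le n$ together with its final factor, after the reindexing $k=j-1$ and the identity $\s_{n-1,1}^g(x_1,\dots)=(\zeta,h\m,x_2,\dots)$, are precisely the factors of $P$. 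Hence the cocycle identity reads
\begin{align*}
 \alpha_\zeta(1_{\zeta\m}w\circ\s_{n-1,0}^h(x_2,\dots,x_n))\,w(\zeta,h\m x_2,x_3,\dots,x_n)\,w(g\m x_1,x_2,\dots,x_n)\m\,P=1_{(\zeta,h\m,x_2,\dots,x_n)},
\end{align*}
and solving for $w(g\m x_1,x_2,\dots,x_n)\m\,P$ yields \cref{plan-core-ident}.

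The routine part is the index bookkeeping, and this is where I expect the only real friction: one must check that the regrouping of $\Pi(1,1)$ matches the enumeration of the factors of $\dl^n w$ sign-for-sign, the critical point being the shift $j=k+1$ relating the ``merge'' factors $w(\zeta,h\m,\dots,x_{j-1}x_j,\dots)^{(-1)^j}$ of the cocycle identity to the factors $w\circ\s_{n-1,1}^g(x_1,\dots,x_kx_{k+1},\dots)^{(-1)^{k+1}}$ of $P$. One must also confirm that all partial products lie in, and are invertible in, the single ideal $\cD_{(\zeta,h\m,x_2,\dots,x_n)}$, so that cancellation against its identity $1_{(\zeta,h\m,x_2,\dots,x_n)}$ is legitimate. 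The case $n=2$ (where $\Pi(2,2)$ is the empty product) is a quick sanity check of the signs. Notably, only the $i=1$ instance of the recursion for $\tau$ and $\s$ is needed here, not its full form.
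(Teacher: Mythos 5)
Your proposal is correct and takes essentially the same route as the paper: both proofs evaluate the partial $n$-cocycle identity $\dl^n w=e_{n+1}$ at the tuple $\s_{n,1}^g(x_1,\dots,x_n)=(\tau_1^g(x_1),(\ol{x\m_1 g})\m,x_2,\dots,x_n)$, identify the leading, $i=1$ and $i=2$ factors, and cancel the remaining factors (your $P$) against the $i=1$, $k\ge 2$ and drop-last $i=1$ factors of $\Pi(1,1)$, leaving the $k=1$ slice and $\Pi(2,2)$. The only difference is presentational: you pre-factor $\Pi(1,1)$ and reduce to a core identity, whereas the paper multiplies the expanded cocycle identity by $\Pi(1,1)$ directly and handles the absorbed idempotents in a closing remark.
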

\begin{proof}
  Since $w$ is a partial $n$-cocycle, one has that (see \cref{eta_n^g-def,tau_n^g-def,sigma_ni-def}) 
\begin{align}
 (\dl^nw)\circ\s_{n,1}^g(x_1,\dots,x_n)&=(\dl^nw)(\tau_1^g(x_1),(\ol{x\m_1 g})\m,x_2,\dots,x_n)\notag\\
 &=(\dl^nw)(\eta_1^g(x_1),(\ol{x\m_1 g})\m,x_2,\dots,x_n)\notag\\
 &=(\dl^nw)(g\m x_1\cdot\ol{x\m_1 g},(\ol{x\m_1 g})\m,x_2,\dots,x_n)\label{delta^nw(g-inv-x_1-ol(x_1-inv-g)_ol(x_1-inv-g)-inv_...)}\\
 &=1_{g\m x_1\cdot\ol{x\m_1 g}}1_{(g\m x_1,x_2,\dots,x_n)}.\notag
\end{align}
  Applying \cref{delta^n-for-n>0}, we expand \cref{delta^nw(g-inv-x_1-ol(x_1-inv-g)_ol(x_1-inv-g)-inv_...)} as follows:
\begin{align*}
 1_{g\m x_1\cdot\ol{x\m_1 g}}1_{(g\m x_1,x_2,\dots,x_n)}&=\alpha_{g\m x_1\cdot\ol{x\m_1 g}}(1_{{(\ol{x\m_1 g})\m x\m_1 g}}w((\ol{x\m_1 g})\m,x_2,\dots,x_n))\\
 &\quad\cdot w(g\m x_1,x_2,\dots,x_n)\m\\
 &\quad\cdot w(g\m x_1\cdot\ol{x\m_1 g},(\ol{x\m_1 g})\m x_2,x_3,\dots,x_n)\\
 &\quad\cdot \prod_{k=2}^{n-1}w(g\m x_1\cdot\ol{x\m_1 g},(\ol{x\m_1 g})\m,x_2,\dots,x_kx_{k+1},\dots x_n)^{(-1)^{k+1}}\\
 &\quad\cdot w(g\m x_1\cdot\ol{x\m_1 g},(\ol{x\m_1 g})\m,x_2,\dots,x_{n-1})^{(-1)^{n+1}}.
\end{align*}
Using our notation \cref{eta_n^g-def,sigma_ni-def,sigma_nn-def,eta_n^g-def,tau_n^g-def}, we conclude that
\begin{align}
 1_{\eta_1^g(x_1)}w(g\m x_1,\dots,x_n)\m&=\alpha_{\eta_1^g(x_1)}(1_{\eta_1^g(x_1)\m}w\circ\s_{n-1,0}^{\ol{x\m_1 g}}(x_2,\dots,x_n))\m\label{af_(eta_1^g(x_1))(1_(eta_1^g(x_1)-inv...)}\\
 &\quad\cdot w(\tau_1^g(x_1),(\ol{x\m_1 g})\m x_2,x_3,\dots,x_n)\m\label{w(tau_1^g(x_1)_ol(x_1-inv-g)-inv-x_2...)}\\
 &\quad\cdot \prod_{k=2}^{n-1}w\circ\s_{n-1,1}^g(x_1,\dots,x_kx_{k+1},\dots x_n)^{(-1)^k}\label{prod-w-circ-s^g_(n-1_1)^(-1)^k}\\
 &\quad\cdot w\circ\s_{n-1,1}^g(x_1,x_2,\dots,x_{n-1})^{(-1)^n},\label{w-circ-s^g_(n-1_1)^(-1)^n}
\end{align}
 the lines \cref{prod-w-circ-s^g_(n-1_1)^(-1)^k,w-circ-s^g_(n-1_1)^(-1)^n} being the inverses of the factors of $\Pi(1,1)$, which correspond to $i=1$ and $2\le k\le n-1$. Thus, after the multiplication of the right-hand side of equality \cref{af_(eta_1^g(x_1))(1_(eta_1^g(x_1)-inv...),w(tau_1^g(x_1)_ol(x_1-inv-g)-inv-x_2...),prod-w-circ-s^g_(n-1_1)^(-1)^k,w-circ-s^g_(n-1_1)^(-1)^n} by $\Pi(1,1)$, they will be reduced, and at their place we shall have the factors of $\Pi(1,1)$ which correspond to $k=1$, and the factors of $\Pi(2,2)$ (i.e. those of $\Pi(1,1)$ with indexes $2\le i,k\le n-1$), giving the right-hand side of \cref{w-Pi(1_1)}. It remains to note that $1_{\eta_1^g(x_1)}\Pi(1,1)=\Pi(1,1)$  and the idempotents which appear in the cancellations are absorbed by the element \cref{w(tau_1^g(x_1)_ol(x_1-inv-g)-inv-x_2...)}, except $1_{x\m_1 g}$ which is absorbed by the element in the right-hand side of \cref{af_(eta_1^g(x_1))(1_(eta_1^g(x_1)-inv...)}.     
\end{proof}

\begin{lem}\label{w'-cohom-w-step}
For all $1<j<n$, $w\in Z^n(G,\A)$, $g\in\Lb$ and $x_1,\dots,x_n\in G$:
\begin{align}
 &w(\tau_{j-1}^g(x_1,\dots,x_{j-1}),(\ol{x\m_{j-1}\dots x\m_1 g})\m x_j,x_{j+1},\dots,x_n)\m\Pi(j,j)\notag\\
 &=\alpha_{\eta_1^g(x_1)}(1_{\eta_1^g(x_1)\m}w\circ\s_{n-1,j-1}^{\ol{x\m_1 g}}(x_2,\dots,x_n))^{(-1)^j}\notag\\
 &\quad\cdot w(\tau_j^g(x_1,\dots,x_j),(\ol{x\m_j\dots x\m_1 g})\m x_{j+1},x_{j+2},\dots,x_n)\m\Pi(j+1,j+1)\notag\\
 &\quad\cdot \prod_{i=j}^{n-1}w\circ\s_{n-1,i}^g(x_1,\dots,x_jx_{j+1},\dots,x_n)^{(-1)^{i+j}}\notag\\
 &\quad\cdot \prod_{s=1}^{j-1}w\circ\s_{n-1,j-1}^g(x_1,\dots,x_sx_{s+1},\dots,x_n)^{(-1)^{s+j}}\label{w-Pi(j_j)}
\end{align}
(here by $\Pi(n,n)$ we mean the identity element $1_\A$).
\end{lem}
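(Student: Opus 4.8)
The plan is to follow the proof of \cref{w'-cohom-w-base-1} almost verbatim, that lemma being exactly the case $j=1$. The device is to apply the partial $n$-cocycle identity $\dl^nw=e_{n+1}$ to the $(n+1)$-tuple $\s_{n,j}^g(x_1,\dots,x_n)$, which for $1<j<n$ is $(\tau_j^g(x_1,\dots,x_j),(\ol{x\m_j\dots x\m_1g})\m,x_{j+1},\dots,x_n)$ by \cref{sigma_ni-def}. Writing $y_1,\dots,y_{n+1}$ for its entries, the evaluation $(\dl^nw)\circ\s_{n,j}^g(x_1,\dots,x_n)$ is a product of idempotents, which is later absorbed.

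First I would record the group-theoretic collapses. From \cref{eta_n^g-def,tau_n^g-def} and the fact that $\ol{\cdot}$ is constant on left cosets of $H$ one gets $\eta_k^g(x_1,\dots,x_k)=(\ol{x\m_{k-1}\dots x\m_1g})\m x_k\ol{x\m_k\dots x\m_1g}$, whence the two splice points of $\s_{n,j}^g$ collapse to $y_jy_{j+1}=(\ol{x\m_{j-1}\dots x\m_1g})\m x_j$ and $y_{j+1}y_{j+2}=(\ol{x\m_j\dots x\m_1g})\m x_{j+1}$, while $\eta_{k+1}^g(x_1,\dots,x_{k+1})=\eta_k^{\ol{x\m_1g}}(x_2,\dots,x_{k+1})$ gives $(y_2,\dots,y_{n+1})=\s_{n-1,j-1}^{\ol{x\m_1g}}(x_2,\dots,x_n)$. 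Consequently, in the expansion \cref{delta^n-for-n>0} of $(\dl^nw)\circ\s_{n,j}^g$, the leading term $\af_{y_1}(1_{y\m_1}w(y_2,\dots,y_{n+1}))$ is $\af_{\eta_1^g(x_1)}(1_{\eta_1^g(x_1)\m}w\circ\s_{n-1,j-1}^{\ol{x\m_1g}}(x_2,\dots,x_n))$, the factor $i=j$ is the left-hand $w$-value of \cref{w-Pi(j_j)}, and the factor $i=j+1$ is $w(\tau_j^g(x_1,\dots,x_j),(\ol{x\m_j\dots x\m_1g})\m x_{j+1},x_{j+2},\dots,x_n)$.

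Next I would isolate the factor $i=j$, rewrite it as the inverse $w$-value on the left of \cref{w-Pi(j_j)}, and transfer the remaining factors of \cref{delta^n-for-n>0} to the right. The factors $i=1,\dots,j-1$, where both merged entries lie inside $\tau_j^g$ (so that $\eta_i^g\eta_{i+1}^g=\eta_i^g$ evaluated with $x_i$ replaced by $x_ix_{i+1}$), assemble into $\prod_{s=1}^{j-1}w\circ\s_{n-1,j-1}^g(x_1,\dots,x_sx_{s+1},\dots,x_n)^{(-1)^{s+j}}$. Then, decomposing $\Pi(j,j)$ by \cref{Pi(l_m)-def} into its $k=j$ slice $\prod_{i=j}^{n-1}w\circ\s_{n-1,i}^g(x_1,\dots,x_jx_{j+1},\dots,x_n)^{(-1)^{i+j}}$, its $i=j$ column (including the $i=j$ term of the single product), and $\Pi(j+1,j+1)$, I would cancel the $i=j$ column against the tail factors $i=j+2,\dots,n$ and the last factor of \cref{delta^n-for-n>0} — all of which are values of $w\circ\s_{n-1,j}^g$ with two tail arguments merged, respectively with $x_n$ dropped. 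What survives is precisely the right-hand side of \cref{w-Pi(j_j)}; the case $j=n-1$ is covered by the convention $\Pi(n,n)=1_\A$.

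The main obstacle is this last reorganization carried out \emph{with the correct signs and idempotents}: one must reconcile the exponents $(-1)^i$ coming from \cref{delta^n-for-n>0} with the exponents $(-1)^{k+i}$, $(-1)^{n+i}$, $(-1)^{i+j}$ and $(-1)^{s+j}$ in \cref{Pi(l_m)-def} and \cref{w-Pi(j_j)}, and verify that each idempotent produced in the cancellations is absorbed by a neighbouring $w$-value — using \cref{A_g-sst-D_x-iff-A_1-sst-D_g-inv-x} to locate the ideals — exactly as in the closing lines of the proof of \cref{w'-cohom-w-base-1}, the genuinely delicate one being $1_{\eta_1^g(x_1)}$, absorbed by the leading $\af_{\eta_1^g(x_1)}(\dots)$ term.
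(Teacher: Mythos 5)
Your proposal is correct and takes essentially the same route as the paper: both apply the partial $n$-cocycle identity to the tuple $\s_{n,j}^g(x_1,\dots,x_n)$, use the $\eta$-collapse identities to recognize the leading term as $\af_{\eta_1^g(x_1)}\bigl(1_{\eta_1^g(x_1)\m}\,w\circ\s_{n-1,j-1}^{\ol{x\m_1 g}}(x_2,\dots,x_n)\bigr)$ and the $i=j$, $i=j+1$ factors as the two displayed $w$-values, assemble the $s=1,\dots,j-1$ factors into $w\circ\s_{n-1,j-1}^g$, and cancel the $w\circ\s_{n-1,j}^g$ tail factors against the $i=j$ part of $\Pi(j,j)$, leaving exactly the $k=j$ slice and $\Pi(j+1,j+1)$. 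The only cosmetic difference is in the idempotent bookkeeping: the paper keeps $1_{\eta_1^g(x_1\dots x_j)}$ explicit and absorbs it via $1_{\eta_1^g(x_1\dots x_j)}\Pi(j,j)=\Pi(j,j)$, while you emphasize absorption by the leading factor; both work.
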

\begin{proof}
 We use the same idea as in the proof of \cref{w'-cohom-w-base-1}:
 \begin{align}
  &(\dl^nw)\circ\s_{n,j}^g(x_1,\dots,x_n)\notag\\
  &=(\dl^nw)(\tau_j^g(x_1,\dots,x_j),(\ol{x\m_j\dots x\m_1g})\m,x_{j+1},\dots,x_n)\notag\\
  &=(\dl^nw)(\eta_1^g(x_1),\eta_2^g(x_1,x_2),\dots,\eta_j^g(x_1,\dots,x_j),(\ol{x\m_j\dots x\m_1g})\m,x_{j+1},\dots,x_n)\notag\\
  &=(\dl^nw)(g\m x_1\ol{x\m_1 g},(\ol{x\m_1 g})\m x_2\ol {x\m_2x\m_1 g},\dots,(\ol{x\m_{j-1}\dots x\m_1g})\m x_j \ol{x\m_j\dots x\m_1g},\notag\\
  &\quad\quad(\ol{x\m_j\dots x\m_1g})\m,x_{j+1},\dots,x_n)\label{delta^nw(g-inv-x_1-ol(x_1-inv-g)_ol(x_1-inv-g)-inv-x_2-ol...)}\\
  &=1_{g\m x_1\ol{x\m_1 g}}\, 1_{g\m x_1x_2\ol {x\m_2x\m_1 g}}\dots 1_{g\m x_1\dots x_j\ol{x\m_j\dots x\m_1g}}1_{(g\m x_1\dots x_j,x_{j+1},\dots,x_n)}.\notag
 \end{align}
Expanding \cref{delta^nw(g-inv-x_1-ol(x_1-inv-g)_ol(x_1-inv-g)-inv-x_2-ol...)}, we obtain by \cref{delta^n-for-n>0}
\begin{align*}
 &1_{g\m x_1\ol{x\m_1 g}} \, 1_{g\m x_1x_2\ol {x\m_2x\m_1 g}}\dots 1_{g\m x_1\dots x_j\ol{x\m_j\dots x\m_1g}}1_{(g\m x_1\dots x_j,x_{j+1},\dots,x_n)}\\
 &=\alpha_{g\m x_1\ol{x\m_1 g}}(1_{(\ol{x\m_1 g})\m x\m_1g}w((\ol{x\m_1 g})\m x_2\ol {x\m_2x\m_1 g},\dots,\\
 &\quad\quad(\ol{x\m_{j-1}\dots x\m_1g})\m x_j \ol{x\m_j\dots x\m_1g},(\ol{x\m_j\dots x\m_1g})\m,x_{j+1},\dots,x_n))\\
 &\quad\cdot w(g\m x_1x_2\ol {x\m_2x\m_1 g},\dots,(\ol{x\m_{j-1}\dots x\m_1g})\m x_j \ol{x\m_j\dots x\m_1g},\\
 &\quad\quad(\ol{x\m_j\dots x\m_1g})\m,x_{j+1},\dots,x_n)\m\\
 &\quad\cdot\prod_{s=2}^{j-2}w(g\m x_1\ol{x\m_1 g},\dots,(\ol{x\m_{s-1}\dots x\m_1g})\m x_sx_{s+1} \ol{x\m_{s+1}\dots x\m_1g},\dots ,\\
 &\quad\quad(\ol{x\m_{j-1}\dots x\m_1g})\m x_j \ol{x\m_j\dots x\m_1g},(\ol{x\m_j\dots x\m_1g})\m,x_{j+1},\dots, x_n)^{(-1)^s}\\
 &\quad\cdot w(g\m x_1\ol{x\m_1 g},\dots , (\ol{x\m_{j-1}\dots x\m_1g})\m x_{j-1}x_j \ol{x\m_j\dots x\m_1g},\\
 &\quad\quad(\ol{x\m_j\dots x\m_1g})\m,x_{j+1},\dots, x_n)^{(-1)^{j-1}}\\
 &\quad\cdot w(g\m x_1\ol{x\m_1 g},\dots , (\ol{x\m_{j-2}\dots x\m_1g})\m x_{j-1} \ol{x\m_{j-1}\dots x\m_1g},\\
 &\quad\quad(\ol{x\m_{j-1}\dots x\m_1g})\m x_j,x_{j+1},\dots,x_n)^{(-1)^j}\\
 &\quad\cdot w(g\m x_1\ol{x\m_1 g},\dots , (\ol{x\m_{j-1}\dots x\m_1g})\m x_j \ol{x\m_j\dots x\m_1g},\\
 &\quad\quad(\ol{x\m_j\dots x\m_1g})\m x_{j+1},x_{j+2},\dots,x_n)^{(-1)^{j+1}}\\
 &\quad\cdot\prod_{t=j+1}^{n-1}w(g\m x_1\ol{x\m_1 g},\dots,(\ol{x\m_{j-1}\dots x\m_1g})\m x_j \ol{x\m_j\dots x\m_1g},\\
 &\quad\quad(\ol{x\m_j\dots x\m_1g})\m,x_{j+1},\dots,x_tx_{t+1},\dots,x_n)^{(-1)^{t+1}}\\
 &\quad\cdot w(g\m x_1\ol{x\m_1 g},\dots,(\ol{x\m_{j-1}\dots x\m_1g})\m x_j \ol{x\m_j\dots x\m_1g},\\
 &\quad\quad(\ol{x\m_j\dots x\m_1g})\m,x_{j+1},\dots,x_{n-1})^{(-1)^{n+1}}.
 \end{align*}
 We rewrite this in our shorter notation \cref{eta_n^g-def,sigma_nn-def,sigma_ni-def,eta_n^g-def,tau_n^g-def}:
 \begin{align}
 &1_{\eta_1^g(x_1)}1_{\eta_1^g(x_1x_2)}\dots 1_{\eta_1^g(x_1\dots x_j)}1_{(g\m x_1\dots x_j,x_{j+1},\dots,x_n)}\notag\\
 &=\alpha_{\eta_1^g(x_1)}(1_{\eta_1^g(x_1)\m}w\circ\s_{n-1,j-1}^{\ol{x\m_1 g}}(x_2,\dots,x_n))\notag\\
 &\quad\cdot w\circ\s_{n-1,j-1}^g(x_1x_2,x_3,\dots,x_n)\m\label{w-circ-sigma_n-1_j-1(x_1x_2_...)}\\
 &\quad\cdot\prod_{s=2}^{j-2}w\circ\s_{n-1,j-1}^g(x_1,\dots,x_sx_{s+1},\dots,x_n)^{(-1)^s}\label{prod-w-circ-sigma_n-1_j-1(x_1_..._x_sx_s+1_...)}\\
 &\quad \cdot w\circ\s_{n-1,j-1}^g(x_1,\dots,x_{j-1}x_j,\dots,x_n)^{(-1)^{j-1}}\label{w-circ-sigma_n-1_j-1(x_1_..._x_j-1x_j_...)}\\
 &\quad\cdot w(\tau_{j-1}^g(x_1,\dots,x_{j-1}),(\ol{x\m_{j-1}\dots x\m_1g})\m x_j,x_{j+1},\dots,x_n)^{(-1)^j}\notag\\
 &\quad\cdot w(\tau_j^g(x_1,\dots,x_j),(\ol{x\m_j\dots x\m_1g})\m x_{j+1},x_{j+2},\dots,x_n)^{(-1)^{j+1}}\notag\\
 &\quad\cdot\prod_{t=j+1}^{n-1}w\circ\s_{n-1,j}^g(x_1,\dots,x_tx_{t+1},\dots,x_n)^{(-1)^{t+1}}\notag\\
 &\quad\cdot w\circ\s_{n-1,j}^g(x_1,\dots,x_{n-1})^{(-1)^{n+1}}.\notag
 \end{align}
 Note that the factors \cref{w-circ-sigma_n-1_j-1(x_1x_2_...),w-circ-sigma_n-1_j-1(x_1_..._x_j-1x_j_...)} may be included into the product \cref{prod-w-circ-sigma_n-1_j-1(x_1_..._x_sx_s+1_...)}, permitting thus $s$ to run from $1$ to $j-1$ in \cref{prod-w-circ-sigma_n-1_j-1(x_1_..._x_sx_s+1_...)}. It follows that
 \begin{align}
 &1_{\eta_1^g(x_1\dots x_j)}w(\tau_{j-1}^g(x_1,\dots,x_{j-1}),(\ol{x\m_{j-1}\dots x\m_1g})\m x_j,x_{j+1},\dots,x_n)\m\label{1_eta_1^g(x_1_dots_x_j)-w(...)}\\
 &=\alpha_{\eta_1^g(x_1)}(1_{\eta_1^g(x_1)\m}w\circ\s_{n-1,j-1}^{\ol{x\m_1 g}}(x_2,\dots,x_n))^{(-1)^j}\notag\\
 &\quad\cdot\prod_{s=1}^{j-1}w\circ\s_{n-1,j-1}^g(x_1,\dots,x_sx_{s+1},\dots,x_n)^{(-1)^{s+j}}\notag\\
 &\quad\cdot w(\tau_j^g(x_1,\dots,x_j),(\ol{x\m_j\dots x\m_1g})\m x_{j+1},x_{j+2},\dots,x_n)\m\notag\\
 &\quad\cdot\prod_{t=j+1}^{n-1}w\circ\s_{n-1,j}^g(x_1,\dots,x_tx_{t+1},\dots,x_n)^{(-1)^{t+j+1}}\label{prod-w-circ-sigma^g_n-1_j(x_1_dots_x_tx_t+1_dots_x_n)}\\
 &\quad\cdot w\circ\s_{n-1,j}^g(x_1,\dots,x_{n-1})^{(-1)^{n+j+1}}.\label{1_eta_1^g-w(tau_j-1_...)}
 \end{align}
 The lines  \cref{prod-w-circ-sigma^g_n-1_j(x_1_dots_x_tx_t+1_dots_x_n),1_eta_1^g-w(tau_j-1_...)}  are the inverses of the factors of $\Pi(j,j)$ corresponding to $i=j$ and $j+1\le k\le n-1$. Therefore,  multiplication by $\Pi(j,j)$ replaces  these two lines by the factors of $\Pi(j,j)$ with $j\le i\le n-1$ and $k=j$, and, whenever $j<n-1$, there will also appear all the factors of $\Pi(j+1,j+1)$, giving the right-hand side of equality \cref{w-Pi(j_j)}.  Finally, the left-hand side of \cref{w-Pi(j_j)} coincides with  \cref{1_eta_1^g(x_1_dots_x_j)-w(...)} multiplied by $\Pi(j,j)$, as $1_{\eta_1^g(x_1\dots x_j)}\Pi(j,j)=\Pi(j,j)$.
\end{proof}

\begin{lem}\label{w'-cohom-w-final-step}
 For all $w\in Z^1(G,\A)$, $g\in\Lb$ and $x\in G$:
 \begin{align}\label{w-Pi(n_n)-n=1}
  1_{\eta_1^g(x)}w(g\m x)\m=\alpha_{\eta_1^g(x)}(1_{\eta_1^g(x)\m}w((\ol{x\m g})\m)\m) (w\circ\tau_1^g)(x)\m.
 \end{align}
 Moreover, for all $n>1$, $w\in Z^n(G,\A)$, $g\in\Lb$ and $x_1,\dots,x_n\in G$:
 \begin{align}
  &w(\tau_{n-1}^g(x_1,\dots,x_{n-1}),(\ol{x\m_{n-1}\dots x\m_1 g})\m x_n)\m\notag\\
  &=\alpha_{\eta_1^g(x_1)}(1_{\eta_1^g(x_1)\m}w\circ\s_{n-1,n-1}^{\ol{x\m_1 g}}(x_2,\dots,x_n))^{(-1)^n}\notag\\
  &\quad\cdot\prod_{s=1}^{n-1}w\circ\s_{n-1,n-1}^g(x_1,\dots,x_sx_{s+1},\dots,x_n)^{(-1)^{s+n}}\notag\\
  &\quad\cdot w\circ\tau_n^g(x_1,\dots,x_n)\m.\label{w-Pi(n_n)}
 \end{align}
\end{lem}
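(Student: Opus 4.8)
The plan is to treat both displayed identities as the terminal, $j=n$, instance of the recursion established in \cref{w'-cohom-w-base-1,w'-cohom-w-step}, the only structural difference being that $\s_{n,n}^g$ ends in $(\ol{x\m_n\dots x\m_1 g})\m$ with no trailing variables, so that $\Pi(n,n)=1_\A$ and the telescoping terminates in the factor $w\circ\tau_n^g(x_1,\dots,x_n)\m$ occurring in the defining formula \cref{w'-def} of $w'$. Concretely, for $n>1$ I would start from the partial $n$-cocycle identity $(\dl^nw)\circ\s_{n,n}^g(x_1,\dots,x_n)=1_{(\s_{n,n}^g(x_1,\dots,x_n))}$, write out $\s_{n,n}^g=(\eta_1^g(x_1),\dots,\eta_n^g(x_1,\dots,x_n),(\ol{x\m_n\dots x\m_1 g})\m)$ via \cref{eta_n^g-def,tau_n^g-def,sigma_nn-def}, and expand the left-hand side with \cref{delta^n-for-n>0}.

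The heart of the computation is the same bar-telescoping used before: unwinding $\eta_k^g(x_1,\dots,x_k)=(\ol{x\m_{k-1}\dots x\m_1 g})\m x_k\ol{x\m_k\dots x\m_1 g}$ (with $\ol g=g$ since $g\in\Lb'$), one checks that for $1\le i\le n-1$ the product $\eta_i^g\eta_{i+1}^g=(\ol{x\m_{i-1}\dots x\m_1 g})\m x_ix_{i+1}\ol{x\m_{i+1}\dots x\m_1 g}$, so the $i$-th factor $w(y_1,\dots,y_iy_{i+1},\dots,y_{n+1})$ of the expansion becomes $w\circ\s_{n-1,n-1}^g(x_1,\dots,x_ix_{i+1},\dots,x_n)$; the $\af$-factor, after deleting $y_1=\eta_1^g(x_1)$, becomes $\af_{\eta_1^g(x_1)}(1_{\eta_1^g(x_1)\m}w\circ\s_{n-1,n-1}^{\ol{x\m_1 g}}(x_2,\dots,x_n))$, exactly as the first line of \cref{w'-cohom-w-base-1}; the $i=n$ factor uses $\eta_n^g(x_1,\dots,x_n)(\ol{x\m_n\dots x\m_1 g})\m=(\ol{x\m_{n-1}\dots x\m_1 g})\m x_n$ and so equals the term $w(\tau_{n-1}^g(x_1,\dots,x_{n-1}),(\ol{x\m_{n-1}\dots x\m_1 g})\m x_n)$ sitting on the left of \cref{w-Pi(n_n)}; and the last factor is $w(y_1,\dots,y_n)=w\circ\tau_n^g(x_1,\dots,x_n)$.

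It then remains to solve the cocycle identity for this $i=n$ factor. Since it appears with exponent $(-1)^n$, I would isolate it and raise to the power $(-1)^{n+1}$ to obtain its inverse on the left; a routine sign count converts the exponent on the $\af_{\eta_1^g(x_1)}(\cdots)$ term into $(-1)^n$, on each $w\circ\s_{n-1,n-1}^g(x_1,\dots,x_sx_{s+1},\dots,x_n)$ into $(-1)^{s+n}$, and on $w\circ\tau_n^g(x_1,\dots,x_n)$ into $-1$, giving precisely \cref{w-Pi(n_n)}. The case $n=1$ is the degenerate version of this: $\s_{1,1}^g(x)=(\eta_1^g(x),(\ol{x\m g})\m)$ with $\eta_1^g(x)(\ol{x\m g})\m=g\m x$, so the $1$-cocycle identity directly reads $\af_{\eta_1^g(x)}(1_{\eta_1^g(x)\m}w((\ol{x\m g})\m))\,w(g\m x)\m\,(w\circ\tau_1^g)(x)=1_{\eta_1^g(x)}1_{g\m x}$, and solving for $w(g\m x)\m$, together with $\af_{\eta_1^g(x)}(a)\m=\af_{\eta_1^g(x)}(a\m)$, yields \cref{w-Pi(n_n)-n=1}.

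The main obstacle is, as in the preceding two lemmas, the idempotent bookkeeping rather than the algebraic skeleton. The cocycle identity produces on its right the idempotent $1_{\eta_1^g(x_1)}1_{\eta_1^g(x_1x_2)}\dots 1_{\eta_1^g(x_1\dots x_n)}1_{g\m x_1\dots x_n}$ (its partial products being computed through \cref{prod-of-etas}), and one must check that after the reductions exactly the correct idempotent survives on each side. The delicate point, absent in \cref{w'-cohom-w-step} where the trailing factor $\Pi(j,j)$ performed the reduction via $1_{\eta_1^g(x_1\dots x_j)}\Pi(j,j)=\Pi(j,j)$, is the factor $1_{\eta_1^g(x_1\dots x_n)}$: here it is the ideal $\cD_{\eta_1^g(x_1)}\cD_{\eta_1^g(x_1x_2)}\dots\cD_{\eta_1^g(x_1\dots x_n)}$ of the terminal factor $w\circ\tau_n^g(x_1,\dots,x_n)\m$, together with the $s=n-1$ factor $w\circ\s_{n-1,n-1}^g(x_1,\dots,x_{n-1}x_n)$, that carries it, so that the two sides are reconciled on the common ideal exactly in the spirit of the closing remarks of \cref{w'-cohom-w-base-1,w'-cohom-w-step}; the residual idempotent $1_{x\m_1 g}$ is, as there, swallowed by the $\af_{\eta_1^g(x_1)}$-term.
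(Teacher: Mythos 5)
Your proposal is correct and follows essentially the paper's own route: the paper likewise obtains \cref{w-Pi(n_n)-n=1} by expanding $(\dl^1w)\circ\s_{1,1}^g(x)=(\dl^1w)(\eta_1^g(x),(\ol{x\m g})\m)=1_{\eta_1^g(x)}1_{g\m x}$ and solving, and obtains \cref{w-Pi(n_n)} by expanding $(\dl^nw)\circ\s_{n,n}^g(x_1,\dots,x_n)=1_{\eta_1^g(x_1)}1_{\eta_1^g(x_1x_2)}\dots 1_{\eta_1^g(x_1\dots x_n)}1_{g\m x_1\cdots x_n}$ exactly as in \cref{w'-cohom-w-step} and isolating the factor with exponent $(-1)^n$ (it explicitly ``skips the details''). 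Your factor identifications via \cref{eta_n^g=eta_n-1^ol(x-inv-g),eta_n^g-with-x_ix_i+1-glued,eta_n^g-eta_n+1^g,prod-of-etas}, your sign count, and your accounting of where $1_{\eta_1^g(x_1\dots x_n)}$ is absorbed all check out and in fact supply the bookkeeping the paper omits.
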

\begin{proof}
 For \cref{w-Pi(n_n)-n=1} write
 \begin{align*}
 1_{\eta_1^g(x)}1_{g\m x}&=(\dl^1w) \circ\s_{1,1}^g(x)\\
 &=(\dl^1w)(\tau_1^g(x),(\ol{x\m g})\m)\\
 &=(\dl^1w) (\eta_1^g(x),(\ol{x\m g})\m)\\
 &=\alpha_{\eta_1^g(x)}(1_{\eta_1^g(x)\m}w((\ol{x\m g})\m))w(g\m x)\m (w\circ\tau_1^g)(x).
 \end{align*}

 To get \cref{w-Pi(n_n)}, analyze the proof of \cref{w'-cohom-w-step} (we skip the details):
\begin{align*}
 &1_{\eta_1^g(x_1)}1_{\eta_1^g(x_1x_2)}\dots 1_{\eta_1^g(x_1\dots x_n)}1_{g\m x_1 \cdot \ldots \cdot x_n}\\
 &=(\dl^nw)\circ\s_{n,n}^g(x_1,\dots,x_n)\\  
 &=\alpha_{\eta_1^g(x_1)}(1_{\eta_1^g(x_1)\m}w\circ\s_{n-1,n-1}^{\ol{x\m_1 g}}(x_2,\dots,x_n))\\
 &\quad\cdot\prod_{s=1}^{n-1}w\circ\s_{n-1,n-1}^g(x_1,\dots,x_sx_{s+1},\dots,x_n)^{(-1)^s}\\
 &\quad\cdot w(\tau_{n-1}^g(x_1,\dots,x_{n-1}),(\ol{x\m_{n-1}\dots x\m_1 g})\m x_n)^{(-1)^n}\\
 &\quad\cdot w\circ\tau_n^g(x_1,\dots,x_n)^{(-1)^{n+1}}.
 \end{align*}
\end{proof}

\begin{lem}\label{w'-cohom-w-recursion}
 For all $n>0$, $w\in Z^n(G,\A)$ and $x_1,\dots,x_n\in G$:
 \begin{align}
  &(\dl^{n-1}\e)(x_1,\dots,x_n)\alpha_{x_1}(1_{x\m_1}\e(x_2,\dots,x_n))\m w(x_1,\dots,x_n)\m\notag\\
  &=\prod_{g\in\Lb}\0_g\circ\alpha_{\eta_1^g(x_1)}\left(1_{\eta_1^g(x_1)\m}\prod_{j=0}^{n-1}w\circ\s_{n-1,j}^{\ol{x\m_1 g}}(x_2,\dots,x_n)^{(-1)^{j+1}}\right)\notag\\
  &\quad\cdot w'(x_1\dots,x_n)\m.\label{delta-e-alpha-inv-w-inv=prod-w'-inv}
 \end{align}
\end{lem}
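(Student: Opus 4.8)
The plan is to iterate the four preceding lemmas, each of which unfolds the term $w(g\m x_1,\dots,x_n)\m\Pi(1,1)$ sitting inside $\0_g$. By \cref{w'-cohom-w-base-0} the left-hand side of \cref{delta-e-alpha-inv-w-inv=prod-w'-inv} already equals $\prod_{g\in\Lb}\0_g(w(g\m x_1,x_2,\dots,x_n)\m\Pi(1,1))$ for $n>1$ (and $\prod_{g\in\Lb}\0_g(w(g\m x)\m)$ for $n=1$), so the entire computation can be carried out inside the argument of a single $\0_g$. For $n=1$ I would simply insert the idempotent $1_{\eta_1^g(x)}$ — which costs nothing under $\0_g$, since $\eta_1^g(x)\in H$ forces $\A_1\sst\cD_{\eta_1^g(x)}$ so that \cref{0_g(a)=0_g(1_xa)} applies — and then rewrite the bracket by \cref{w'-cohom-w-final-step}.

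For $n>1$ I would apply \cref{w'-cohom-w-base-1}, then \cref{w'-cohom-w-step} for $j=2,\dots,n-1$, and finally \cref{w'-cohom-w-final-step}, in that order. Writing $T_j$ for the carried quantity $w(\tau_{j-1}^g(x_1,\dots,x_{j-1}),(\ol{x\m_{j-1}\dots x\m_1 g})\m x_j,x_{j+1},\dots,x_n)\m\Pi(j,j)$ (so $T_1=w(g\m x_1,\dots,x_n)\m\Pi(1,1)$ and $T_n$ carries $\Pi(n,n)=1_\A$), each lemma has the shape $T_j=A_{j-1}T_{j+1}L_j$, where $A_{j-1}=\alpha_{\eta_1^g(x_1)}(1_{\eta_1^g(x_1)\m}w\circ\s_{n-1,j-1}^{\ol{x\m_1 g}}(x_2,\dots,x_n))^{(-1)^j}$ is precisely the $(j-1)$-st factor of the target product $\prod_{j'=0}^{n-1}(\cdots)^{(-1)^{j'+1}}$, and $L_j$ collects surplus factors $w\circ\s_{n-1,i}^g(\dots)$; the last step additionally releases the tail $w\circ\tau_n^g(x_1,\dots,x_n)\m$. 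Since $\A$ is commutative, composing these gives $T_1=\bigl(\prod_{j=0}^{n-1}A_j\bigr)\,w\circ\tau_n^g(x_1,\dots,x_n)\m\,\prod_{j=1}^{n}L_j$.

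The crux is to show $\prod_{j=1}^nL_j$ contributes only idempotents. Setting $W_{i,k}=w\circ\s_{n-1,i}^g(x_1,\dots,x_kx_{k+1},\dots,x_n)$, inspection of the four lemmas shows that each $W_{i,k}$ with $1\le k\le i\le n-1$ occurs exactly twice in $\prod_jL_j$ — once with exponent $(-1)^{i+k}$ (from the $i$-indexed products) and once with exponent $(-1)^{i+k+1}$ (from the $s$-indexed products, with $s=k$ and $j-1=i$) — so that $\prod_jL_j=\prod_{1\le k\le i\le n-1}1_{(\s_{n-1,i}^g(x_1,\dots,x_kx_{k+1},\dots,x_n))}$. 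This sign bookkeeping, together with the subsequent handling of the surviving idempotents, is the one genuinely delicate point; everything else is a homomorphism computation.

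It remains to assemble. I would apply $\0_g$ (a homomorphism onto the block $\A_g$), pull $\alpha_{\eta_1^g(x_1)}$ out of $\prod_jA_j$ (legitimate, the $1_{\eta_1^g(x_1)\m}$ factors combining), and reassemble over $g\in\Lb$ using that the blocks are pairwise orthogonal, so that a product of $\0_g$-values becomes a componentwise product. This produces the first factor of \cref{delta-e-alpha-inv-w-inv=prod-w'-inv} times $\prod_g\0_g(w\circ\tau_n^g(x_1,\dots,x_n)\m)$, with an extra idempotent $\0_g(1_{(\s_{n-1,i}^g(\dots))})$ in each component. Since the left-hand side of \cref{delta-e-alpha-inv-w-inv=prod-w'-inv} lies in $\cD_{(x_1,\dots,x_n)}$ (it carries the factor $w(x_1,\dots,x_n)\m$), I would multiply the identity by $1_{(x_1,\dots,x_n)}$: this leaves the left side unchanged, kills every component with $\A_g$ not contained in $\cD_{(x_1,\dots,x_n)}$, and on the surviving components gives $\A_g\sst\cD_{(x_1,\dots,x_n)}$, hence $\A_1\sst\cD_{g\m x_1\cdots x_m}$ for all $m$; as the partial products defining $\cD_{(\s_{n-1,i}^g(\dots))}$ reduce (via \cref{prod-of-etas}) either to elements of $H$ or to such $g\m x_1\cdots x_m$, every leftover idempotent $\0_g(1_{(\s_{n-1,i}^g(\dots))})$ equals $1_{\A_g}$ and disappears. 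Finally $1_{(x_1,\dots,x_n)}\prod_g\0_g(w\circ\tau_n^g(x_1,\dots,x_n)\m)=w'(x_1,\dots,x_n)\m$ by \cref{w'-def} (using that $\0_g$ is a homomorphism, so $\0_g(w\circ\tau_n^g(\dots)\m)$ inverts $\0_g(w\circ\tau_n^g(\dots))$ in $\A_g$), which yields exactly \cref{delta-e-alpha-inv-w-inv=prod-w'-inv}.
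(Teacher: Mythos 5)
Your proposal is correct and follows essentially the same route as the paper: rewrite the left-hand side via \cref{w'-cohom-w-base-0}, unfold inside each $\0_g$ by iterating \cref{w'-cohom-w-base-1}, \cref{w'-cohom-w-step} and \cref{w'-cohom-w-final-step}, and cancel the two families of surplus factors --- your pairwise matching of $W_{i,k}$ with exponents $(-1)^{i+k}$ and $(-1)^{i+k+1}$ is precisely the paper's change of index $j'=j-1$ followed by swapping the order of the double product. Your explicit handling of the leftover idempotents (multiplying by $1_{(x_1,\dots,x_n)}$ and reducing the partial products of $\s_{n-1,i}^g(\dots)$ to elements of $H$ or to $g\m x_1\cdots x_m$) makes precise a cancellation the paper leaves implicit, but this is a refinement of, not a departure from, the paper's argument.
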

\begin{proof}
 If $n=1$, then the result follows from \cref{delta-e-alpha-inv-w-inv-n=1,w-Pi(n_n)-n=1,w'-def,0_g(a)=0_g(1_xa)} and the fact that $\eta_1^g(x)\in H$.
 
 Let $n>1$. Using the recursion whose base is \cref{w-Pi(1_1)}, an intermediate step is \cref{w-Pi(j_j)} and the final step is \cref{w-Pi(n_n)}, we have
 \begin{align}
  &w(g\m x_1,\dots,x_n)\m\Pi(1,1)\notag\\
  &=\alpha_{\eta_1^g(x_1)}\left(1_{\eta_1^g(x_1)\m}\prod_{j=0}^{n-1}w\circ\s_{n-1,j}^{\ol{x\m_1 g}}(x_2,\dots,x_n)^{(-1)^{j+1}}\right)\notag\\
  &\quad\cdot w\circ\tau_n^g(x_1,\dots,x_n)\m\notag\\
  &\quad\cdot\prod_{j=1}^{n-1}\prod_{i=j}^{n-1}w\circ\s_{n-1,i}^g(x_1,\dots,x_jx_{j+1},\dots,x_n)^{(-1)^{i+j}}\label{prod_j=1^n-1-i=j^n-1}\\
  &\quad\cdot\prod_{j=2}^n\prod_{s=1}^{j-1}w\circ\s_{n-1,j-1}^g(x_1,\dots,x_sx_{s+1},\dots,x_n)^{(-1)^{s+j}}.\label{prod_j=1^n-1-s=1^j-1}
 \end{align}
 After the change of indexes $j'=j-1$ the product \cref{prod_j=1^n-1-s=1^j-1} becomes
 $$
 \prod_{j'=1}^{n-1}\prod_{s=1}^{j'}w\circ\s_{n-1,j'}^g(x_1,\dots,x_sx_{s+1},\dots,x_n)^{(-1)^{s+j'+1}}.
 $$
 Now switching the order in this double product, we come to
 $$
 \prod_{s=1}^{n-1}\prod_{j'=s}^{n-1}w\circ\s_{n-1,j'}^g(x_1,\dots,x_sx_{s+1},\dots,x_n)^{(-1)^{s+j'+1}}.
 $$
 The latter is exactly the inverse of \cref{prod_j=1^n-1-i=j^n-1}. Hence,
 \begin{align*}
  &w(g\m x_1,x_2,\dots,x_n)\m\Pi(1,1)\\
  &=\alpha_{\eta_1^g(x_1)}\left(1_{\eta_1^g(x_1)\m}\prod_{j=0}^{n-1}w\circ\s_{n-1,j}^{\ol{x\m_1 g}}(x_2,\dots,x_n)^{(-1)^{j+1}}\right)\\
  &\quad\cdot w\circ\tau_n^g(x_1,\dots,x_n)\m.
  \end{align*}
 It remains to substitute this into \cref{delta-e-alpha-inv-w-inv} and to apply \cref{w'-def}.
\end{proof}

\begin{lem}\label{apply-af-to-prod-and-switch-with-0}
For all $x\in G$ and $a:\Lb'\to\A$ one has
\begin{align}\label{af_x(1_x-inv-prod-0_g)=1_x-prod-0_g-circ-af}
 \af_x\left(1_{x\m}\prod_{g\in\Lb}\0_g(a(g))\right)=1_x\prod_{g\in\Lb}\0_g\circ\af_{\eta_1^g(x)}\left(1_{\eta_1^g(x)\m}a\left(\ol{x\m g}\right)\right).
\end{align}
\end{lem}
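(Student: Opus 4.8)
The plan is to establish \cref{af_x(1_x-inv-prod-0_g)=1_x-prod-0_g-circ-af} by comparing the two sides block by block, i.e.\ by showing that their projections $\pr_h$ onto each $\A_h$, $h\in\Lb$, agree. The left-hand side is a value of $\af_x$, hence lies in $\cD_x$, while the right-hand side carries an explicit factor $1_x$; therefore both sides are supported on the blocks $\A_h\sst\cD_x$, and on the remaining blocks both projections vanish. So it suffices to fix $h\in\Lb$ with $\A_h\sst\cD_x$ and match $\pr_h$ of the two sides.

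First I would record the indexing bijection. By \cref{ol-xg-in-Lb-iff-A_g-in-D_x-inv} of \cref{Lb-and-Lb'}, applied with $x$ replaced by $x\m$, each such $h$ yields $g:=\ol{x\m h}\in\Lb$ with $\af_{x\m}(\A_h)=\A_g$, whence $\A_g\sst\cD_{x\m}$ and $\af_x(\A_g)=\A_h$, i.e.\ $\ol{xg}=h$; this makes $h\mapsto\ol{x\m h}$ a bijection onto $\{g\in\Lb\mid\A_g\sst\cD_{x\m}\}$. I would also note, straight from \cref{eta_n^g-def}, that $\eta_1^h(x)=\eta(x\m h)=h\m x\,\ol{x\m h}=h\m xg\in H$, so that $h\cdot\eta_1^h(x)=xg$.

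Next come the two parallel computations. Writing $b=\prod_{g\in\Lb}\0_g(a(g))$, so that $\pr_g(b)=\af_g(\pr_1(a(g)))$ by \cref{0_g-def}, the block-permutation property \cref{isom-principal-ideals} applied to $\af_x\colon\cD_{x\m}\to\cD_x$ gives $\pr_h(\af_x(1_{x\m}b))=\af_x(\pr_g(1_{x\m}b))=\af_x(\af_g(\pr_1(a(g))))$ with $g=\ol{x\m h}$, and then $\af_x\circ\af_g=\af_{xg}$ on $\A_1$ reduces this to $\af_{xg}(\pr_1(a(g)))$. For the right-hand side, set $\eta=\eta_1^h(x)\in H$; since $\af_\eta(\A_1)=\A_1$, \cref{isom-principal-ideals} gives $\pr_1\circ\af_\eta=\af_\eta\circ\pr_1$ on $\A_1$, so $\pr_1(\af_\eta(1_{\eta\m}a(g)))=\af_\eta(\pr_1(a(g)))$ and hence $\pr_h$ of the right-hand side equals $\af_h(\af_\eta(\pr_1(a(g))))=\af_h\circ\af_{h\m xg}(\pr_1(a(g)))=\af_{xg}(\pr_1(a(g)))$. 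The two projections coincide, which is the assertion.

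The step that requires care is the legitimacy of the composition rule \ref{af_g-circ-af_h-on-product} on the block $\A_1$ in both computations, that is, checking that $\A_1$ lies in the relevant intersection of domains. In each case one inclusion is immediate ($\A_1\sst\cD_{g\m}$ because $g\in\Lb$, resp.\ $\A_1\sst\cD_{\eta\m}$ because $\eta\in H$), while the other, $\A_1\sst\cD_{(xg)\m}$, I would deduce from $\ol{xg}=h\in\Lb$ via \cref{g-in-Lb-iff-A_1-in-D_g-inv} of \cref{Lb-and-Lb'}. Beyond this, the only real difficulty is the index bookkeeping—keeping the transversal element $g=\ol{x\m h}$ and the $H$-component $\eta_1^h(x)=h\m xg$ properly aligned throughout.
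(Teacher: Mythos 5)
Your argument is correct and is essentially the paper's own proof in different bookkeeping: both hinge on the bijection $g\leftrightarrow\ol{xg}$ from \cref{Lb-and-Lb'}, on the block-permutation property of \cref{isom-principal-ideals}, and on the composition axiom applied on $\A_1$, the paper merely establishing the factorwise identity $\0_g\circ\af_{\eta_1^g(x)}\bigl(1_{\eta_1^g(x)\m}a\bigl(\ol{x\m g}\bigr)\bigr)=\af_x\circ\0_{\ol{x\m g}}\bigl(a\bigl(\ol{x\m g}\bigr)\bigr)$ and then reindexing the product (with a final blockwise check), where you instead compare $\pr_h$ of the two sides directly and reduce both to $\af_{xg}(\pr_1(a(g)))$. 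One small citation slip: the inclusion $\A_1\sst\cD_{(xg)\m}$ does not follow from \cref{g-in-Lb-iff-A_1-in-D_g-inv} of \cref{Lb-and-Lb'} (which only yields $\A_1\sst\cD_{h\m}$, and $xg$ need not lie in $\Lb'$), but from \cref{A_g-sst-D_x-iff-A_1-sst-D_g-inv-x} with $x$ replaced by $x\m$, giving $\A_g\sst\cD_{x\m}\iff\ol{xg}\in\Lb\iff\A_1\sst\cD_{g\m x\m}$ --- a one-line fix that does not affect the argument.
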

\begin{proof}
 First of all observe using \cref{ol-xg-in-Lb-iff-A_g-in-D_x-inv} of \cref{Lb-and-Lb'} that
\begin{align}\label{1_x-prod_g-in-Lambda}
 1_x\prod_{g\in\Lb}c_g=\prod_{g\in\Lb,\A_g\sst\cD_x}c_g=\prod_{g, \; \ol{x\m g}\in\Lb}c_g,
\end{align}
where $c_g$ is an arbitrary element of $\A_g$. Thus, in the right-hand side of \cref{af_x(1_x-inv-prod-0_g)=1_x-prod-0_g-circ-af} we may replace the condition $g\in\Lb$ by a stronger one $g,\ol{x\m g}\in\Lb$. Notice also from \cref{A_g-sst-D_x-iff-A_1-sst-D_g-inv-x,0_g(a)=0_g(1_xa)} that we may put $1_{g\m x}$ inside of $\0_g$ in the right-hand side of \cref{af_x(1_x-inv-prod-0_g)=1_x-prod-0_g-circ-af}. 

Now
\begin{align}\label{1_(g-inv-x)af_(eta_1^g)=af_(g-inv-x)-circ-af_(ol(g-inv-x))}
 1_{g\m x}\af_{\eta_1^g(x)}\left(1_{\eta_1^g(x)\m}a\left(\ol{x\m g}\right)\right)=\alpha_{g\m x_1}\circ\alpha_{\ol{x\m_1 g}}\left(1_{(\ol{x\m g})\m}1_{\eta_1^g(x)\m}a\left(\ol{x\m g}\right)\right),
\end{align}
and denoting the argument of $\af_{\ol{x\m_1 g}}$ in \cref{1_(g-inv-x)af_(eta_1^g)=af_(g-inv-x)-circ-af_(ol(g-inv-x))} by $b=b(g,x)$, we deduce from \cref{0_g=pr_g-circ-alpha_g} that
\begin{align*}
 \0_g\circ\af_{g\m x}\circ\af_{\ol{x\m g}}(b)&=\pr_g\circ\af_g\left(1_{g\m}\af_{g\m x}\circ\af_{\ol{x\m g}}(b)\right)\\
  &=\pr_g\circ\af_g\circ\af_{g\m}\circ\af_x\left(1_{x\m}1_{x\m g}\af_{\ol{x\m g}}(b)\right)\\
  &=\pr_g\circ\af_x\left(1_{x\m}1_{x\m g}\af_{\ol{x\m g}}(b)\right).
\end{align*}
As $\ol{x\ol{x\m g}}=g\in\Lb$, by \cref{ol-xg-in-Lb-iff-A_g-in-D_x-inv} of \cref{Lb-and-Lb'} we have $\A_{\ol{x\m g}}\sst\cD_{x\m}$ and $\af_x\left(\A_{\ol{x\m g}}\right)=\A_g$. Moreover, $\A_{\ol{x\m g}}\sst\D_{x\m g}$ by \cref{A_g-sst-D_x-iff-A_1-sst-D_g-inv-x}. Hence, in view of \cref{isom-principal-ideals,0_g=pr_g-circ-alpha_g}
\begin{align*}
 \pr_g\circ\af_x\left(1_{x\m}1_{x\m g}\af_{\ol{x\m g}}(b)\right)&=\af_x\circ\pr_{\ol{x\m g}}\left(1_{x\m}1_{x\m g}\af_{\ol{x\m g}}(b)\right)\\
 &=\af_x\circ\pr_{\ol{x\m g}}\circ\af_{\ol{x\m g}}(b)\\
 &=\af_x\circ\0_{\ol{x\m g}}(b),
\end{align*} 
and consequently
$$ 
\0_g\circ\af_{\eta_1^g(x)}\left(1_{\eta_1^g(x)\m}a\left(\ol{x\m g}\right)\right)=\alpha_x\circ\0_{\ol{x\m g}}(b)=\alpha_x\circ\0_{\ol{x\m g}}\left(a\left(\ol{x\m g}\right)\right).
$$
Here we used \cref{0_g(a)=0_g(1_xa)} to remove $1_{\eta_1^g(x)\m}$ and $1_{(\ol{x\m g})\m}$ from $b$. It follows that the right-hand side of \cref{af_x(1_x-inv-prod-0_g)=1_x-prod-0_g-circ-af} is 
\begin{align}
\prod_{g,\ol{x\m g}\in\Lb}\alpha_x\circ\0_{\ol{x\m g}}\left(a\left(\ol{x\m g}\right)\right)=\alpha_x\left(\prod_{g,\ol{x\m g}\in\Lb}\0_{\ol{x\m g}}\left(a\left(\ol{x\m g}\right)\right)\right),\label{af_x(prod-0_(x-inv-g)(1_(ol-x-inv-g)a(ol-x-inv-g)))}
\end{align}
which is verified by checking the projection of each side of the latter equality onto an arbitrary block $\A_t$, $t\in\Lb$. Let $g'=\ol{x\m g}\in\Lb$. Then $g=\ol{x\ol{x\m g}}=\ol{xg'}\in\Lb$, so \cref{af_x(prod-0_(x-inv-g)(1_(ol-x-inv-g)a(ol-x-inv-g)))} becomes, in view of \cref{1_x-prod_g-in-Lambda},
$$
\alpha_x\left(\prod_{g',\ol{xg'}\in\Lb}\0_{g'}\left(a\left(g'\right)\right)\right)=\alpha_x\left(1_{x\m}\prod_{g'\in\Lb}\0_{g'}\left(a\left(g'\right)\right)\right),
$$
proving \cref{af_x(1_x-inv-prod-0_g)=1_x-prod-0_g-circ-af}.
\end{proof}

\begin{lem}\label{alpha_eta_1^g(x_1)}
For all $n>0$, $w\in Z^n(G,\A)$ and $x_1,\dots,x_n\in G$:
\begin{align}
 &1_{(x_1\dots,x_n)}\prod_{g\in\Lb}\0_g\circ\alpha_{\eta_1^g(x_1)}\left(1_{\eta_1^g(x_1)\m}\prod_{j=0}^{n-1}w\circ\s_{n-1,j}^{\ol{x\m_1 g}}(x_2,\dots,x_n)^{(-1)^j}\right)\notag\\
 &=\alpha_{x_1}\left(1_{x\m_1}\e(x_2,\dots,x_n)\right).\label{prod_g-alpha_eta_1^g=alpha_x_1}
\end{align}
\end{lem}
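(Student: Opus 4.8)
The plan is to recognize \cref{prod_g-alpha_eta_1^g=alpha_x_1} as essentially an application of \cref{apply-af-to-prod-and-switch-with-0} to a carefully chosen function $a\colon\Lb'\to\A$, with the only extra work being the bookkeeping of idempotents. Set
$$
 a(g)=\prod_{i=0}^{n-1}w\circ\s_{n-1,i}^g(x_2,\dots,x_n)^{(-1)^i}
$$
(which is defined for every $g\in\Lb'$, since the functions $\tau_i^g$, $\eta_i^g$ and $\ol{\cdot}$ are defined through the transversal $\Lb'$), and put $P=\prod_{g\in\Lb}\0_g(a(g))$. Then \cref{eps-def} says precisely that $\e(x_2,\dots,x_n)=1_{(x_2,\dots,x_n)}P$, so the right-hand side of \cref{prod_g-alpha_eta_1^g=alpha_x_1} is $\af_{x_1}(1_{x\m_1}1_{(x_2,\dots,x_n)}P)$.

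The first step is to strip off the idempotent. Since $1_{x\m_1}$ is idempotent I factor $1_{x\m_1}1_{(x_2,\dots,x_n)}P=(1_{x\m_1}1_{(x_2,\dots,x_n)})(1_{x\m_1}P)$, and both factors lie in $\cD_{x\m_1}$, on which $\af_{x_1}$ is a ring isomorphism. Applying \cref{product-ideals} with $x=x_1$ and $(y_1,\dots,y_{n-1})=(x_2,x_2x_3,\dots,x_2\dots x_n)$ gives $\af_{x_1}(\cD_{x\m_1}\cD_{(x_2,\dots,x_n)})=\cD_{(x_1,\dots,x_n)}$, and since a ring isomorphism sends the identity idempotent of an ideal to that of its image, $\af_{x_1}(1_{x\m_1}1_{(x_2,\dots,x_n)})=1_{(x_1,\dots,x_n)}$. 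Hence, by multiplicativity of $\af_{x_1}$ on $\cD_{x\m_1}$, the right-hand side of \cref{prod_g-alpha_eta_1^g=alpha_x_1} equals $1_{(x_1,\dots,x_n)}\af_{x_1}(1_{x\m_1}P)$.

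The second step is to expand $\af_{x_1}(1_{x\m_1}P)$ by \cref{apply-af-to-prod-and-switch-with-0}, applied to this $a$ and to $x=x_1$, which yields
\begin{align*}
 \af_{x_1}(1_{x\m_1}P)=1_{x_1}\prod_{g\in\Lb}\0_g\circ\af_{\eta_1^g(x_1)}\left(1_{\eta_1^g(x_1)\m}a\left(\ol{x\m_1 g}\right)\right),
\end{align*}
where $a(\ol{x\m_1 g})=\prod_{j=0}^{n-1}w\circ\s_{n-1,j}^{\ol{x\m_1 g}}(x_2,\dots,x_n)^{(-1)^j}$ is exactly the argument appearing on the left-hand side of \cref{prod_g-alpha_eta_1^g=alpha_x_1}. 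Multiplying this identity by $1_{(x_1,\dots,x_n)}$ and using that $1_{x_1}$ is a factor of $1_{(x_1,\dots,x_n)}$, so that $1_{(x_1,\dots,x_n)}1_{x_1}=1_{(x_1,\dots,x_n)}$, absorbs the stray $1_{x_1}$ and produces precisely the left-hand side of \cref{prod_g-alpha_eta_1^g=alpha_x_1}, completing the argument.

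I expect this lemma to be routine once \cref{apply-af-to-prod-and-switch-with-0} is in hand; the only delicate points are purely organizational. First, one must treat $a$ as a function on all of $\Lb'$ rather than on $\Lb$, since \cref{apply-af-to-prod-and-switch-with-0} evaluates it at $\ol{x\m_1 g}\in\Lb'$, which need not lie in $\Lb$. Second, one should check that the empty-tuple conventions handle the base case $n=1$ uniformly: there $(x_2,\dots,x_n)$ is empty, $1_{(x_2,\dots,x_n)}=1_\A$, and $P=\prod_{g\in\Lb}\0_g(w(g\m))=\e$ as in \cref{e-for-n=1}, while \cref{product-ideals} degenerates to $\af_{x_1}(1_{x\m_1})=1_{x_1}=1_{(x_1)}$, so the displayed chain of equalities remains valid.
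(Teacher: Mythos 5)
Your proof is correct and is essentially the paper's own argument: the paper likewise defines $a(g)=\prod_{j=0}^{n-1}w\circ\s_{n-1,j}^g(x_2,\dots,x_n)^{(-1)^j}$ on $\Lb'$, applies \cref{apply-af-to-prod-and-switch-with-0} with $x=x_1$, and uses $1_{(x_1,\dots,x_n)}=1_{(x_1,\dots,x_n)}1_{x_1}$ together with $1_{(x_1,\dots,x_n)}=\af_{x_1}\left(1_{x\m_1}1_{(x_2,\dots,x_n)}\right)$ and \cref{eps-def}. The only difference is cosmetic: you run the chain from the right-hand side to the left, while the paper goes left to right, and your explicit checks (the \cref{product-ideals} justification and the $n=1$ degenerate case) are points the paper leaves implicit.
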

\begin{proof}
Let us fix $n$, $w$ and $x_2,\dots,x_n$. For arbitrary $g\in\Lb'$ define
$$
 a(g)=\prod_{j=0}^{n-1}w\circ\s_{n-1,j}^g(x_2,\dots,x_n)^{(-1)^j}.
$$
Then the left-hand side of \cref{prod_g-alpha_eta_1^g=alpha_x_1} equals
$$
1_{(x_1\dots,x_n)}\prod_{g\in\Lb}\0_g\circ\alpha_{\eta_1^g(x_1)}\left(1_{\eta_1^g(x_1)\m}a(\ol{x\m_1 g})\right).
$$
Since $1_{(x_1\dots,x_n)}=1_{(x_1\dots,x_n)}1_{x_1}$, then applying \cref{apply-af-to-prod-and-switch-with-0}, we transform this into
$$
1_{(x_1\dots,x_n)}\af_{x_1}\left(1_{x\m_1}\prod_{g\in\Lb}\0_g\left(\prod_{j=0}^{n-1}w\circ\s_{n-1,j}^g(x_2,\dots,x_n)^{(-1)^j}\right)\right).
$$
Rewriting $1_{(x_1,\dots,x_n)}$ as $\alpha_{x_1}\left(1_{x\m_1}1_{(x_2,\dots,x_n)}\right)$ and using \cref{eps-def}, we come to the right-hand side of \cref{prod_g-alpha_eta_1^g=alpha_x_1}.
\end{proof}

\begin{thrm}\label{w'-cohom-w}
 Let $\A$ be a direct product of commutative unital indecomposable rings with a structure of a (unital) partial $G$-module, $n>0$, $w\in Z^n(G,\A)$ and $w',\e$ be as in \cref{defn-of-w'-and-eps}. Then $w=\dl^{n-1}\e\cdot w'$. In particular, $w'\in Z^n(G,\A)$.
\end{thrm}	
\begin{proof}
 This is an immediate consequence of \cref{w'-cohom-w-recursion,alpha_eta_1^g(x_1)}.
\end{proof}

\section{Existence of a globalization}\label{existence-glob}

In this section we construct the cocycle $\wtl w$ whose existence was announced above. Keeping the notation of \cref{notion-glob}, we begin with some auxiliary formulas whose proof will be left to the reader.

\begin{lem}\label{formulas-for-eta_n^g}
 Let $g\in\Lb'$. Then
 \begin{align}
  \eta_n^g(x_1,\dots,x_n)&=\eta_{n-1}^{\ol{x\m_1g}}(x_2,\dots,x_n),\ n\ge 2,\label{eta_n^g=eta_n-1^ol(x-inv-g)}\\
  \eta_n^g(x_1,\dots,x_i,x_{i+1},\dots,x_n)&=\eta_{n-1}^g(x_1,\dots,x_ix_{i+1},\dots,x_n),\ 1\le i\le n-2,\label{eta_n^g-with-x_ix_i+1-glued}\\
  \eta_n^g(x_1,\dots,x_{n-1},x_nx_{n+1})&=\eta_n^g(x_1,\dots,x_n)\eta_{n+1}^g(x_1,\dots,x_{n+1}),\ n\ge 1.\label{eta_n^g-eta_n+1^g}
 \end{align}
\end{lem}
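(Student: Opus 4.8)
The plan is to derive all three identities directly from the definition \cref{eta_n^g-def} of $\eta_n^g$ together with two elementary properties of the bar operation. First I would record that $\eta(x)=x\m\ol x$ and that the bar is insensitive to right multiplication by $H$ inside its argument: since $\ol y=yh$ for some $h\in H$, one has $\ol{z\ol y}=\ol{zyh}=\ol{zy}$ for all $y,z\in G$. I would also fix the shorthand $p_k=\ol{x_k\m\cdots x_1\m g}$, so that by \cref{eta_n^g-def} one simply has $\eta_n^g(x_1,\dots,x_n)=\eta(x_n\m p_{n-1})$; this turns each identity into a short manipulation of the elements $p_k$.

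For \cref{eta_n^g=eta_n-1^ol(x-inv-g)} I would expand the right-hand side as $\eta_{n-1}^{\ol{x_1\m g}}(x_2,\dots,x_n)=\eta\bigl(x_n\m\,\ol{x_{n-1}\m\cdots x_2\m\,\ol{x_1\m g}}\bigr)$ and apply the absorption property repeatedly to collapse $\ol{x_{n-1}\m\cdots x_2\m\,\ol{x_1\m g}}=\ol{x_{n-1}\m\cdots x_1\m g}=p_{n-1}$; the result is $\eta(x_n\m p_{n-1})$, which is the left-hand side. For \cref{eta_n^g-with-x_ix_i+1-glued} the key observation is that the restriction $i\le n-2$ keeps $x_n$ as the last argument on both sides, so both sides equal $\eta(x_n\m\,\ol{\cdots})$ with the same inner product: gluing $x_i$ and $x_{i+1}$ merely replaces the consecutive factor $x_{i+1}\m x_i\m$ by $(x_ix_{i+1})\m$, and these are equal, so the inner argument is unchanged and the two sides coincide.

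The one identity carrying genuine content is \cref{eta_n^g-eta_n+1^g}, where the gluing occurs in the last slot. Writing $a=x_n\m p_{n-1}$ and $b=x_{n+1}$, and noting $p_n=\ol{x_n\m p_{n-1}}=\ol a$, the claim reduces to the pointwise relation $\eta(b\m a)=\eta(a)\,\eta(b\m\ol a)$. I would prove this by expanding each factor through $\eta(z)=z\m\ol z$: the right-hand side becomes $(a\m\ol a)(\ol a\m b\,\ol{b\m\ol a})=a\m b\,\ol{b\m\ol a}$, and the absorption property gives $\ol{b\m\ol a}=\ol{b\m a}$, so this equals $a\m b\,\ol{b\m a}=\eta(b\m a)$, matching the left-hand side.

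I do not expect a serious obstacle here; these are the routine verifications the authors leave to the reader. The only place demanding care is the bookkeeping in \cref{eta_n^g-eta_n+1^g}, namely checking that the two $\eta$-factors on the right telescope in the correct order and that the absorption property is invoked at exactly the right step. Conceptually this identity just records that $\eta$ is a (non-abelian) cocycle for the right action of $G$ on the transversal $\Lb'$, which is precisely why $\eta_n^g$ interacts so cleanly with the coboundary operator.
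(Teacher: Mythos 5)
Your proof is correct: the paper explicitly leaves this lemma's verification to the reader, and your direct computation from \cref{eta_n^g-def} --- using the absorption property $\ol{z\ol y}=\ol{zy}$ (the bar being constant on left $H$-cosets) for \cref{eta_n^g=eta_n-1^ol(x-inv-g)}, the trivial equality $(x_ix_{i+1})\m=x\m_{i+1}x\m_i$ for \cref{eta_n^g-with-x_ix_i+1-glued}, and the telescoping relation $\eta(b\m a)=\eta(a)\,\eta(b\m\ol a)$ for \cref{eta_n^g-eta_n+1^g} --- is exactly the intended routine argument. Your key relation for \cref{eta_n^g-eta_n+1^g} is moreover the same cocycle identity that underlies the paper's own observation \cref{prod-of-etas}, so there is no divergence in approach to report.
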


We now define a function $\wtl{w'}:G^n\to\A$ by removing the idempotent $1_{(x_1,\dots,x_n)}$ from the right-hand side of \cref{w'-def}, that is
\begin{align}\label{w'-tilde-def}
\wtl{w'}(x_1,\dots,x_n)=\prod_{g\in\Lb}\0_g\circ w\circ\tau_n^g(x_1,\dots,x_n).
\end{align}
As it was observed in the proof of \cref{w'-and-e-are-cochains}, $\wtl{w'}(x_1,\dots,x_n)\in\U\A$, so $\wtl{w'}$ is a classical $n$-cochain from $C^n(G,\U\A)$. It turns out that $\wtl{w'}$ satisfies the ``quasi'' $n$-cocycle identity \cref{cocycle-ident-for-tilde-w}.

\begin{lem}\label{w'-tilde-is-quasi-cocycle}
 Let $n>0$, $w\in Z^n(G,A)$ and $x_1,\dots,x_n\in G$. Then
 \begin{align}
 \af_{x_1}\left(1_{x\m_1}\wtl{w'}(x_2,\dots,x_{n+1})\right)&\prod_{i=1}^n \wtl{w'}(x_1,\dots,x_ix_{i+1},\dots,x_{n+1})^{(-1)^i}\notag\\
 & \cdot\wtl{w'}(x_1,\dots,x_n)^{(-1)^{n+1}} = 1_{x_1}.\label{cocycle-ident-for-tilde-w'}
 \end{align}
\end{lem}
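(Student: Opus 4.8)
The plan is to reduce \cref{cocycle-ident-for-tilde-w'} to the single ``master'' equality asserting that its left-hand side equals $1_{x_1}\prod_{g\in\Lb}\0_g\bigl((\dl^n w)\circ\tau_{n+1}^g(x_1,\dots,x_{n+1})\bigr)$. Granting this, the lemma follows at once: as $w\in Z^n(G,\A)$ we have $(\dl^n w)\circ\tau_{n+1}^g(x_1,\dots,x_{n+1})=1_{(\tau_{n+1}^g(x_1,\dots,x_{n+1}))}$, and since every partial product of the tuple $\tau_{n+1}^g(x_1,\dots,x_{n+1})$ lies in $H$ by \cref{prod-of-etas}, the block $\A_1$ is contained in each of the corresponding ideals; hence $\pr_1$ of this idempotent is $1_{\A_1}$ and $\0_g$ of it is $\af_g(1_{\A_1})=1_{\A_g}$. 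Taking the product over $g\in\Lb$, which runs through all blocks of $\A$, gives $\prod_{g\in\Lb}1_{\A_g}=1_\A$, so the right-hand side collapses to $1_{x_1}$, as desired.

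To prove the master equality I would first invoke \cref{apply-af-to-prod-and-switch-with-0} with $x=x_1$ and $a(g)=w\circ\tau_n^g(x_2,\dots,x_{n+1})$ to rewrite the leading factor $\af_{x_1}(1_{x\m_1}\wtl{w'}(x_2,\dots,x_{n+1}))$ as $1_{x_1}\prod_{g\in\Lb}\0_g\circ\af_{\eta_1^g(x_1)}\bigl(1_{\eta_1^g(x_1)\m}w\circ\tau_n^{\ol{x\m_1 g}}(x_2,\dots,x_{n+1})\bigr)$. By \cref{w'-tilde-def} each of the remaining factors is already a product $\prod_{g\in\Lb}\0_g(\cdots)$, and since $\0_g$ is multiplicative, inverse-compatible and takes values in $\A_g$, the orthogonality of distinct blocks merges everything into $1_{x_1}\prod_{g\in\Lb}\0_g(S_g)$ with
\begin{align*}
 S_g&=\af_{\eta_1^g(x_1)}\bigl(1_{\eta_1^g(x_1)\m}w\circ\tau_n^{\ol{x\m_1 g}}(x_2,\dots,x_{n+1})\bigr)\prod_{i=1}^nw\circ\tau_n^g(x_1,\dots,x_ix_{i+1},\dots,x_{n+1})^{(-1)^i}\\
 &\quad w\circ\tau_n^g(x_1,\dots,x_n)^{(-1)^{n+1}}.
\end{align*}
Setting $z_k=\eta_k^g(x_1,\dots,x_k)$, so that $\tau_{n+1}^g(x_1,\dots,x_{n+1})=(z_1,\dots,z_{n+1})$, the relation \cref{eta_n^g=eta_n-1^ol(x-inv-g)} gives $\tau_n^{\ol{x\m_1 g}}(x_2,\dots,x_{n+1})=(z_2,\dots,z_{n+1})$, the relations \cref{eta_n^g-with-x_ix_i+1-glued,eta_n^g-eta_n+1^g} give $\tau_n^g(x_1,\dots,x_ix_{i+1},\dots,x_{n+1})=(z_1,\dots,z_iz_{i+1},\dots,z_{n+1})$, and $\tau_n^g(x_1,\dots,x_n)=(z_1,\dots,z_n)$. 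Substituting these into \cref{delta^n-for-n>0} shows that $S_g$ is exactly the expansion of $(\dl^n w)(z_1,\dots,z_{n+1})$, hence $S_g=(\dl^n w)\circ\tau_{n+1}^g(x_1,\dots,x_{n+1})$, and substituting back yields the master equality.

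The argument is conceptually straightforward, but the bookkeeping is delicate and this is where I expect the real work to lie. Two points require care. First, one must justify that $\0_g$ may be treated as a ring homomorphism on the factors building $S_g$, including the passage from the global inverses in $\U\A$ to the ideal inverses absorbed inside $\0_g$; this is legitimate because every partial product of $\tau_{n+1}^g(x_1,\dots,x_{n+1})$ lies in $H$, so each relevant ideal contains $\A_1$ and $\0_g=\af_g\circ\pr_1$ respects products and inverses there (cf. \cref{0_g=pr_g-circ-alpha_g}). Second, one must match the leading term exactly: the factor $\af_{\eta_1^g(x_1)}$ produced by \cref{apply-af-to-prod-and-switch-with-0} is the first summand $\af_{z_1}(1_{z\m_1}w(z_2,\dots,z_{n+1}))$ of \cref{delta^n-for-n>0} once one uses $z_1=\eta_1^g(x_1)$ together with the identification $\tau_n^{\ol{x\m_1 g}}(x_2,\dots,x_{n+1})=(z_2,\dots,z_{n+1})$. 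The genuinely mechanical part is carrying the gluing identities \cref{eta_n^g-with-x_ix_i+1-glued,eta_n^g-eta_n+1^g} through all $n$ middle factors; once this index-and-domain tracking is in place, the partial $n$-cocycle identity for $w$ closes the proof.
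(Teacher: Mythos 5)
Your proposal is correct and follows essentially the same route as the paper's proof: the paper likewise applies \cref{apply-af-to-prod-and-switch-with-0} to the leading factor, merges the remaining products inside each $\0_g$ via its homomorphism property, reduces \cref{cocycle-ident-for-tilde-w'} to the partial $n$-cocycle identity $(\dl^nw)\circ\tau^g_{n+1}(x_1,\dots,x_{n+1})=1_{\eta(x\m_1 g)}\dots 1_{\eta(x\m_{n+1}\dots x\m_1 g)}$ using the identities of \cref{formulas-for-eta_n^g}, and disposes of the resulting idempotents through $\0_g(1_{\eta(x\m_i\dots x\m_1 g)})=1_{\A_g}$ exactly as you do. Your tuple notation $\tau_{n+1}^g(x_1,\dots,x_{n+1})=(z_1,\dots,z_{n+1})$ is simply a compact repackaging of the paper's factor-by-factor matching.
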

\begin{proof}
 According to \cref{w'-tilde-def}, the left-hand side of \cref{cocycle-ident-for-tilde-w'} is
\begin{align}
&\af_{x_1}\left(1_{x\m_1}\prod_{g\in\Lb}\0_g\circ w\circ\tau_n^g(x_2,\dots,x_{n+1})\right)\label{af_x_1(1_x_1-inv-prod_g-in-Lb)}\\
&\cdot\prod_{i=1}^n\prod_{g\in\Lb}\0_g\circ w\circ\tau_n^g(x_1,\dots,x_ix_{i+1},\dots,x_{n+1})^{(-1)^i}\label{prod_i=1^n-prod_g-in-Lb}\\
&\cdot \prod_{g\in\Lb}\0_g\circ w\circ\tau_n^g(x_1,\dots,x_n)^{(-1)^{n+1}}.\label{prod_g-in-Lb-0_g-circ-w-circ-tau_n^g}
\end{align}
Using \cref{apply-af-to-prod-and-switch-with-0}, we rewrite \cref{af_x_1(1_x_1-inv-prod_g-in-Lb)} as
$$
 1_{x_1}\prod_{g\in\Lb}\0_g\circ\af_{\eta_1^g(x_1)}\left(1_{\eta_1^g(x_1)\m}w\circ\tau_n^{\ol{x\m_1 g}}(x_2,\dots,x_{n+1})\right).
$$
Moreover, since $\0_g$ is a homomorphism, \cref{prod_i=1^n-prod_g-in-Lb} coincides with
$$
\prod_{g\in\Lb}\0_g\left(\prod_{i=1}^n w\circ\tau_n^g(x_1,\dots,x_ix_{i+1},\dots,x_{n+1})^{(-1)^i}\right).
$$
Therefore, in order to prove \cref{cocycle-ident-for-tilde-w'}, it suffices to check the equality
\begin{align}
1_{\eta(x\m_1 g)}\dots 1_{\eta(x\m_{n+1}\dots x\m_1 g)}&=\af_{\eta_1^g(x_1)}\left(1_{\eta_1^g(x_1)\m}w\circ\tau_n^{\ol{x\m_1 g}}(x_2,\dots,x_{n+1})\right)\label{af_eta_1^g(x_1)(1_eta_1^g(x_1)-inv-w-circ-tau_n)}\\
&\quad\cdot\prod_{i=1}^n w\circ\tau_n^g(x_1,\dots,x_ix_{i+1},\dots,x_{n+1})^{(-1)^i}\label{prod_i=1^n-w-circ-tau_n^g(x_1_..._x_ix_i+1_..._x_n+1)}\\
&\quad\cdot w\circ\tau_n^g(x_1,\dots,x_n)^{(-1)^{n+1}}.\label{w-circ-tau_n^g(x_1_..._x_n)}
\end{align}
Indeed, each $\eta(x\m_i\dots x\m_1 g)$ belongs to $H$, so by \cref{0_g-def}
$$
\0_g\left(1_{\eta(x\m_i\dots x\m_1 g)}\right)=\af_g\circ\pr_1\left(1_{\eta(x\m_i\dots x\m_1 g)}\right)=\af_g(1_{\A_1})=1_{\A_g},
$$
and consequently,
$$
\prod_{g\in\Lb}\0_g\left(1_{\eta(x\m_1 g)}\dots 1_{\eta(x\m_{n+1}\dots x\m_1 g)}\right)=\prod_{g\in\Lb}1_{\A_g}=1_\A.
$$

We show that \cref{af_eta_1^g(x_1)(1_eta_1^g(x_1)-inv-w-circ-tau_n),prod_i=1^n-w-circ-tau_n^g(x_1_..._x_ix_i+1_..._x_n+1),w-circ-tau_n^g(x_1_..._x_n)} is exactly the partial $n$-cocycle identity 
\begin{align}\label{delta^nw-circ-ta^g_n=1_eta(x_1-inv-g)...}
(\dl^nw)\circ\tau^g_{n+1}(x_1,\dots,x_{n+1})=1_{\eta(x\m_1 g)}\dots 1_{\eta(x\m_{n+1}\dots x\m_1 g)}.
\end{align}
By \cref{tau_n^g-def,eta_n^g=eta_n-1^ol(x-inv-g)} one has
$$
\tau_n^{\ol{x\m_1 g}}(x_2,\dots,x_{n+1})=(\eta^g_2(x_1,x_2),\dots,\eta^g_{n+1}(x_1,\dots,x_{n+1})),
$$
so the right-hand side of \cref{af_eta_1^g(x_1)(1_eta_1^g(x_1)-inv-w-circ-tau_n)} is the first factor of the left-hand side of \cref{delta^nw-circ-ta^g_n=1_eta(x_1-inv-g)...} expanded in accordance with \cref{delta^n-for-n>0}. Now, the $i$th factor of the product \cref{prod_i=1^n-w-circ-tau_n^g(x_1_..._x_ix_i+1_..._x_n+1)} is of the form
\begin{align*}
w(\tau^g_{i-1}(x_1,\dots,x_{i-1}),\eta^g_i(x_1,\dots,x_{i-1},x_ix_{i+1}),\dots,\eta^g_n(x_1,\dots,x_ix_{i+1},\dots,x_{n+1}))^{(-1)^i},
\end{align*}
which coincides with the $i$th factor of the analogous product of the expansion of the left-hand side of \cref{delta^nw-circ-ta^g_n=1_eta(x_1-inv-g)...} thanks to \cref{eta_n^g-eta_n+1^g,eta_n^g-with-x_ix_i+1-glued}. Finally, \cref{w-circ-tau_n^g(x_1_..._x_n)} is literally the last factor of the above mentioned expansion.
\end{proof}

We proceed now with the construction of $\wtl w$ needed in \cref{w-glob-iff-exists-tilde-w}. Given $n>0$ and $x_1,\dots,x_n\in G$, we define 
\begin{align}\label{tilde-epsilon-def}
\tl\e(x_1,\dots,x_{n-1})=\e(x_1,\dots,x_{n-1})+1_\A-1_{(x_1,\dots,x_{n-1})}\in\U\A,
\end{align} 
understanding that $\tl\e=\e\in\U\A$ if $n=1$. Define also 
\begin{align}\label{tilde-w-in-terms-of-tl-w'} 
\wtl w(x_1,\dots,x_n)=(\tl\dl^{n-1}\tl\e)(x_1,\dots,x_n)\wtl{w'}(x_1,\dots,x_n)\in\U\A,
\end{align} 
where 
\begin{align}\label{tilde-delta-w-resp-to-tl-af} 
(\tl\dl^{n-1}\tl\e)(x_1,\dots,x_n)&=\tl\af_{x_1}(\tl\e(x_2,\dots,x_n))\notag\\  
&\quad\cdot\prod_{i=1}^{n-1}\tl\e(x_1,\dots,x_i x_{i+1},\dots,x_n)^{(-1)^i}\notag\\ 
&\quad\cdot\tl\e(x_1,\dots,x_{n-1})^{(-1)^n},
\end{align}  
and 
\begin{align}\label{tilde-alpha-def} 
\tl\af_x(a)=\af_x(1_{x\m}a)+1_\A-1_x,
\end{align}  
with $x\in G$ and $a\in \A$.

Our main result is as follows.
\begin{thrm}\label{glob-exists} 
Let $\A$ be a commutative unital ring which is a (possibly infinite) direct product of indecomposable rings, and let $\af=\{\af_g:\D_{g\m}\to\D_g\mid g\in G\}$ be a (not necessarily transitive) unital partial action of $G$ on $\A$. Then for any $n\ge 0$ each partial cocycle $w\in Z^n(G,\A)$ is globalizable. 
\end{thrm}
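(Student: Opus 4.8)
The plan is to deduce \cref{glob-exists} from \cref{w-glob-iff-exists-tilde-w} by exhibiting the required extension $\wtl w$, after removing the two standing simplifications, transitivity and $n>0$. The case $n=0$ is already \cref{0-cocycle-is-globalizable}, so I assume $n>0$. To drop transitivity I would decompose $\Lb$ into $\af$-orbits: the relation ``$\A_{\lb''}=\af_x(\A_{\lb'})$ for some $x$'' partitions $\Lb=\bigsqcup_{o}\Lb_o$, giving $\A=\prod_o\A_o$ with $\A_o=\prod_{\lb\in\Lb_o}\A_\lb$ an $\af$-invariant unital ideal on which $\af$ restricts to a transitive unital partial action. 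The central idempotent $\pr_o$ cutting out $\A_o$ commutes with every $\af_{x_1}(1_{x\m_1}\cdot)$, because $\af_{x_1}$ carries the orbit-$o$ part of $\D_{x\m_1}$ onto the orbit-$o$ part of $\D_{x_1}$; hence $\dl^n$ commutes with $\pr_o$ and each component $w_o=\pr_o\circ w$ lies in $Z^n(G,\A_o)$. If in the transitive case each $w_o$ admits $\wtl{w_o}\colon G^n\to\U{\A_o}$ satisfying \cref{cocycle-ident-for-tilde-w,w-is-restr-of-tilde-w} inside $\A_o$, then $\wtl w=(\wtl{w_o})_o\in\U\A=\prod_o\U{\A_o}$ satisfies these identities in $\A$ componentwise, and \cref{w-glob-iff-exists-tilde-w} concludes the proof. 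Thus it suffices to treat the transitive case.

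In the transitive case I would take $\wtl w$ to be precisely the function of \cref{tilde-w-in-terms-of-tl-w'}, that is $\wtl w=(\tl\dl^{n-1}\tl\e)\,\wtl{w'}$, and verify the two requirements of \cref{w-glob-iff-exists-tilde-w}. For the restriction identity \cref{w-is-restr-of-tilde-w} I would multiply $\wtl w$ by $1_{(x_1,\dots,x_n)}$. This idempotent lies below $1_{x_1}$ and below each $1_{(x_1,\dots,x_ix_{i+1},\dots,x_n)}$, and $\af_{x_1}(1_{x\m_1}1_{(x_2,\dots,x_n)})=1_{(x_1,\dots,x_n)}$ by \cref{product-ideals}; consequently the corrections $1_\A-1_{(\cdots)}$ in $\tl\e$ (see \cref{tilde-epsilon-def}) and $1_\A-1_{x_1}$ in $\tl\af$ (see \cref{tilde-alpha-def}) are annihilated, yielding $1_{(x_1,\dots,x_n)}(\tl\dl^{n-1}\tl\e)=\dl^{n-1}\e$. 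Since multiplication by $1_{(x_1,\dots,x_n)}$ is a ring homomorphism onto $\D_{(x_1,\dots,x_n)}$, and since $w'=1_{(x_1,\dots,x_n)}\wtl{w'}$ by \cref{w'-def,w'-tilde-def} while $w=\dl^{n-1}\e\cdot w'$ by \cref{w'-cohom-w}, I obtain $1_{(x_1,\dots,x_n)}\wtl w=(\dl^{n-1}\e)\,w'=w$, which is \cref{w-is-restr-of-tilde-w}.

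For the quasi-cocycle identity \cref{cocycle-ident-for-tilde-w}, write $Q(f)$ for its left-hand side. The elementary identity $\af_{x_1}(1_{x\m_1}a)=1_{x_1}\tl\af_{x_1}(a)$ lets me rewrite $Q(f)=1_{x_1}\,(\tl\dl^{n}f)$, where $\tl\dl^{n}$ is the degree-$n$ coboundary built from $\tl\af$ as in \cref{tilde-delta-w-resp-to-tl-af}; moreover $f\mapsto Q(f)$ is a homomorphism into the units of $\D_{x_1}$, since $a\mapsto\af_{x_1}(1_{x\m_1}a)$ and $\tl\af_{x_1}$ are multiplicative and the remaining factors are pointwise products. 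As $\wtl w=(\tl\dl^{n-1}\tl\e)\,\wtl{w'}$, multiplicativity gives $Q(\wtl w)=Q(\tl\dl^{n-1}\tl\e)\cdot Q(\wtl{w'})$, and $Q(\wtl{w'})=1_{x_1}$ is exactly \cref{w'-tilde-is-quasi-cocycle}. It therefore remains to show that every tilde-coboundary is a quasi-cocycle, i.e. $Q(\tl\dl^{n-1}\tl\e)=1_{x_1}(\tl\dl^{n}\tl\dl^{n-1}\tl\e)=1_{x_1}$.

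This last step is the main obstacle: the operators $\tl\dl$ do not compose to the trivial map on the nose, because $\tl\af$ is not a genuine $G$-action. The classical telescoping of the cross terms of $\tl\dl^{n}\tl\dl^{n-1}$ still cancels them in pairs, but the single term involving $\tl\af_{x_1}\tl\af_{x_2}$ must be matched with $\tl\af_{x_1x_2}$. The key is that after multiplication by $1_{x_1}$ one has $1_{x_1}\tl\af_{x_1}\tl\af_{x_2}(u)=\af_{x_1}(1_{x\m_1}\tl\af_{x_2}(u))$, and pushing the idempotent inside via $1_{x\m_1}\af_{x_2}(1_{x\m_2}u)=\af_{x_2}(1_{x\m_2x\m_1}1_{x\m_2}u)$ brings the argument into the domain $\D_{x\m_2}\D_{x\m_2x\m_1}$, where \cref{af_g-circ-af_h-on-product} gives $\af_{x_1}\circ\af_{x_2}=\af_{x_1x_2}$; the leftover idempotent $1_{x_1x_2}$ is absorbed because $\af_{x_1x_2}(1_{(x_1x_2)\m}u)\in\D_{x_1x_2}$. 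I would carry this out by the same idempotent bookkeeping used in \cref{w'-cohom-w-base-1,w'-cohom-w-step,w'-cohom-w-final-step}, verifying $1_{x_1}\tl\af_{x_1}\tl\af_{x_2}(u)=1_{x_1}\tl\af_{x_1x_2}(u)$ and that all remaining terms cancel, so that $1_{x_1}(\tl\dl^{n}\tl\dl^{n-1}\tl\e)=1_{x_1}$ and hence $Q(\wtl w)=1_{x_1}$, completing the verification and the proof.
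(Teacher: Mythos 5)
Your proposal is correct and follows essentially the same route as the paper's proof: the same candidate $\wtl w=(\tl\dl^{n-1}\tl\e)\,\wtl{w'}$ from \cref{tilde-w-in-terms-of-tl-w'}, the restriction identity \cref{w-is-restr-of-tilde-w} obtained from \cref{w'-cohom-w} by cutting with $1_{(x_1,\dots,x_n)}$, the quasi-cocycle identity \cref{cocycle-ident-for-tilde-w} via \cref{w'-tilde-is-quasi-cocycle} plus the telescoping cancellation in the mixed $\tl\dl^{n}\tl\dl^{n-1}$ expansion, and the same orbit-wise reduction of the non-transitive case. The only (cosmetic) difference is that you prove the crucial identity $1_{x_1}\tl\af_{x_1}\tl\af_{x_2}(a)=1_{x_1}\tl\af_{x_1x_2}(a)$ directly from \ref{af_g-circ-af_h-on-product} and \cref{product-ideals}, whereas the paper cites it as the commutative version of (19) from \cite{DES2}.
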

\begin{proof} 
Since the case $n=0$ has been explained in \cref{0-cocycle-is-globalizable}, we assume $n>0$. Consider first the transitive case. We will show that our $\wtl w$ defined in \cref{tilde-w-in-terms-of-tl-w'} satisfies \cref{cocycle-ident-for-tilde-w,w-is-restr-of-tilde-w}. It directly follows from \cref{w'-def,w'-tilde-def,tilde-epsilon-def,tilde-delta-w-resp-to-tl-af,tilde-alpha-def} that 
$$ 
 1_{(x_1,\dots,x_n)}\wtl w(x_1,\dots,x_n)=(\dl^{n-1}\e)(x_1,\dots,x_n)\cdot w'(x_1,\dots,x_n) 
$$
for all $x_1,\dots,x_n\in G$.  By \cref{w'-cohom-w} this yields that $\wtl w$  satisfies  \cref{w-is-restr-of-tilde-w}.  As for \cref{cocycle-ident-for-tilde-w}, we see that 
\begin{align}\label{cboper-tilde-w}
\af_{x_1}\left(1_{x\m_1}\wtl w(x_2,\dots,x_{n+1})\right)&\prod_{i=1}^n\wtl w(x_1,\dots,x_ix_{i+1},\dots,x_{n+1})^{(-1)^i}\notag \\
&\cdot\wtl w(x_1,\dots,x_n)^{(-1)^{n+1}} 
\end{align} 
can be written as product of the following two factors
\begin{align}
\af_{x_1}\left(1_{x\m_1}\wtl{w'}(x_2,\dots,x_{n+1})\right)&\prod_{i=1}^n\wtl{w'} (x_1,\dots,x_ix_{i+1},\dots,x_{n+1})^{(-1)^i}\notag\\
&\cdot\wtl{w'}(x_1,\dots,x_n)^{(-1)^{n+1}}\label{delta-of-tilde-w'}
\end{align}  
and 
\begin{align}\label{cboper-tilde-delta}
\af_{x_1}\left(1_{x\m_1}(\tl\dl^{n-1}\e)(x_2,\dots,x_{n+1})\right)&\prod_{i=1}^n(\tl\dl^{n-1}\e)(x_1,\dots,x_ix_{i+1},\dots,x_{n+1})^{(-1)^i}\notag\\
&\cdot(\tl\dl^{n-1}\e)(x_1,\dots,x_n)^{(-1)^{n+1}}. 
\end{align}
Thanks to \cref{w'-tilde-is-quasi-cocycle} the factor \cref{delta-of-tilde-w'} is $1_{x_1}$, whereas the expansion of \cref{cboper-tilde-delta} has the same form as the usual $\dl^n\circ\dl^{n-1}$ in homological algebra, with the difference that instead of a global action we have a mixture of $\af$ with $\tl\af$. Consequently, all factors in \cref{cboper-tilde-delta} to which neither $\af$, nor $\tl\af$ is applied, cancel amongst themselves resulting in $1_\A$. The remaining factors of the expansion are those of 
\begin{equation}\label{af_x_1(1_x_1-inv(tl-delta^n-1e))}
 \af_{x_1}\left(1_{x\m_1}(\tl\dl^{n-1}\e)(x_2,\dots,x_{n+1})\right)
\end{equation} 
and the first factors in each 
\begin{equation}\label{(tl-delta^n-1-e)(x_1_..._x_ix_i+1_..._x_n+1)}
(\tl\dl^{n-1}\e)(x_1,\dots,x_ix_{i+1},\dots,x_{n+1})^{(-1)^i},\ 1\le i\le n,
\end{equation} 
and in 
\begin{equation}\label{(tl-delta^n-1-e)(x_1_..._x_n)}
(\tl\dl^{n-1}\e)(x_1,\dots,x_n)^{(-1)^{n+1}}.
\end{equation} 
The factors in the expansion of \cref{af_x_1(1_x_1-inv(tl-delta^n-1e))}  are exactly
\begin{equation}\label{af_x_1(1_x_1-inv-tl-af_x_2(tl-e(x_3_..._x_n+1)))}
\af_{x_1}\left(1_{x\m_1}\tl\af_{x_2}(\tl\e(x_3,\dots,x_{n+1}))\right),
\end{equation} 
\begin{equation}\label{af_x_1(1_x_1-inv-tl-e(x_2_..._x_i x_i+1_..._x_n+1))}
 \af_{x_1}\left(1_{x\m_1}\tl\e(x_2,\dots,x_i x_{i+1},\dots,x_{n+1})^{(-1)^{i-1}}\right),\ 2\le i\le n,
\end{equation} 
and
\begin{equation}\label{af_x_1(1_x_1-inv-tl-e(x_2_..._x_n))}
\af_{x_1}\left(1_{x\m_1}\tl\e(x_2,\dots,x_n)^{(-1)^n}\right),
\end{equation} 
whereas the first factors in \cref{(tl-delta^n-1-e)(x_1_..._x_ix_i+1_..._x_n+1),(tl-delta^n-1-e)(x_1_..._x_n)} are
\begin{equation}\label{tl-af_x_1x_2(tl-e(x_3_..._x_n+1))}
\tl\af_{x_1x_2}(\tl\e(x_3,\dots,x_{n+1}))\m, 
\end{equation} 
which comes from the case $i=1$ in \cref{(tl-delta^n-1-e)(x_1_..._x_ix_i+1_..._x_n+1)},
\begin{equation}\label{tl-af_x_1(tl-e(x_2_..._x_ix_i+1_..._x_n+1))}
\tl\af_{x_1}(\tl\e(x_2,\dots,x_ix_{i+1},\dots,x_{n+1}))^{(-1)^i},\ 2\le i\le n,
\end{equation} 
and
\begin{equation}\label{tl-af_x_1(tl-e(x_2_..._x_n))}
\tl\af_{x_1}(\tl\e(x_2,\dots,x_n)).
\end{equation} 
Multiplying the elements in \cref{tl-af_x_1(tl-e(x_2_..._x_ix_i+1_..._x_n+1)),tl-af_x_1(tl-e(x_2_..._x_n))} by $1_{x_1}$ we see that they are canceled with those in \cref{af_x_1(1_x_1-inv-tl-e(x_2_..._x_i x_i+1_..._x_n+1)),af_x_1(1_x_1-inv-tl-e(x_2_..._x_n))}, respectively. Now \cref{af_x_1(1_x_1-inv-tl-af_x_2(tl-e(x_3_..._x_n+1)))} equals $1_{x_1}\tl\af_{x_1x_2}(\tl\e(x_3,\dots,x_{n+1}))$ due to the commutative version of (19) from~\cite{DES2}, so that it cancels with \cref{tl-af_x_1x_2(tl-e(x_3_..._x_n+1))}. It follows that \cref{cboper-tilde-delta} also equals $1_{x_1}$, and we conclude that $\wtl w$ satisfies \cref{cocycle-ident-for-tilde-w}. It remains to apply \cref{w-glob-iff-exists-tilde-w}.

If $\alpha$ is not transitive, then we represent $\A$ as a product of ideals, on each of which $\alpha$ acts transitively, so that the construction of $\tilde w$ reduces to the transitive case by means of the projection on such an ideal (see \cite[Proposition 8.4]{DES2}).
\end{proof}

\section{Uniqueness of a globalization}\label{sec-uniqueness}

Our aim is to show that the globalization of $w$ constructed in \cref{existence-glob} is unique up to cohomological equivalence.

We would like to use item (iii) of \cite[Lemma 8.3]{DES2}, whose proof was not sufficiently well explained. To clarify it, we need some new terminology. Let $\R$ be a ring and $\R_\mu\sst\R$, $\mu\in M$, a collection of its unital ideals. Observe from the definition of a direct product that there is a unique homomorphism $\phi:\R\to\prod_{\mu\in M}\R_\mu$, such that $\phi$ followed by the natural projection $\prod_{\mu\in M}\R_\mu\to\R_{\mu'}$ coincides with the multiplication by $1_{\R_{\mu'}}$ in $\R$ for any $\mu'\in M$. In this situation we say that the homomorphism $\phi$ {\it respects projections}.
\begin{lem}\label{sumIdeals}
Let $\cC$ be a not necessarily unital ring and $\{\cC_\mu\mid\mu\in M\}$ a family of pairwise distinct unital ideals in $\cC$. Suppose that $I$ and $J$ are unital ideals in $\cC$ such that 
\begin{align}\label{I-cong-prod-and-J-cong-prod}
I\cong\prod_{\mu\in M_1}\cC_\mu\ \text{and}\ J\cong\prod_{\mu\in M_2}\cC_\mu,
\end{align} 
where $M_1,M_2\sst M$, $\cC_\mu\sst I$ for all $\mu\in M_1$ and $\cC_{\mu'}\sst J$ for all $\mu'\in M_2$. If the isomorphisms \cref{I-cong-prod-and-J-cong-prod} respect projections, then there is a (unique) isomorphism
\begin{align}\label{I+J-cong-prod_(Om_1-cup-Om_2)}
I+J\cong\prod_{\mu\in M_1\cup M_2}\cC_\mu,
\end{align}
which also respects projections.
\end{lem}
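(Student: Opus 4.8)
The plan is to exhibit the canonical projection-respecting map and verify that it is an isomorphism. First I would record that, $I$ and $J$ being unital ideals of $\cC$, their identities $1_I,1_J$ are central idempotents, so $e=1_I+1_J-1_I1_J$ is a central idempotent which serves as the identity of $I+J$; thus $I+J$ is again a unital ideal. For each $\mu\in M_1\cup M_2$ one has $\cC_\mu\sst I$ or $\cC_\mu\sst J$, so $\cC_\mu\sst I+J$, and the universal property of the direct product furnishes a unique homomorphism $\psi\colon I+J\to\prod_{\mu\in M_1\cup M_2}\cC_\mu$ determined by $\psi(a)=(1_{\cC_\mu}a)_\mu$. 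By construction $\psi$ respects projections, and any projection-respecting homomorphism must coincide with it; this already yields the asserted uniqueness. It then remains to prove that $\psi$ is bijective.

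For injectivity, suppose $a\in I+J$ satisfies $1_{\cC_\mu}a=0$ for all $\mu\in M_1\cup M_2$. Multiplying by the central idempotent $1_I$ gives $1_{\cC_\mu}(1_Ia)=1_I(1_{\cC_\mu}a)=0$ for every $\mu\in M_1$, and since the given isomorphism $I\cong\prod_{\mu\in M_1}\cC_\mu$ respects projections, its injectivity forces $1_Ia=0$; symmetrically $1_Ja=0$. Consequently $a=ea=1_Ia+1_Ja-1_I(1_Ja)=0$, so $\psi$ is injective.

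The main obstacle is surjectivity, and its crux is a vanishing of cross terms: for $\mu\in M_1\setminus M_2$ I claim $1_{\cC_\mu}1_J=0$, and symmetrically $1_{\cC_\nu}1_I=0$ for $\nu\in M_2\setminus M_1$. To prove the first, apply the projection-respecting isomorphism $\phi_J\colon J\to\prod_{\nu\in M_2}\cC_\nu$ to the element $1_{\cC_\mu}1_J\in J$: its $\nu$-component is $1_{\cC_\nu}(1_{\cC_\mu}1_J)=1_{\cC_\mu}1_{\cC_\nu}$, which vanishes for every $\nu\in M_2$ because $\mu\neq\nu$ and distinct block idempotents are orthogonal; injectivity of $\phi_J$ then gives $1_{\cC_\mu}1_J=0$. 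This is precisely the step where the indecomposability, hence pairwise orthogonality, of the $\cC_\mu$ is indispensable: without it the map $\psi$ can fail to be surjective, as $I+J$ may sit as a proper ``diagonal'' inside the product.

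Granting the cross-term vanishing, surjectivity is completed constructively. Given $(d_\mu)_{\mu\in M_1\cup M_2}$, use the surjectivity of the two given isomorphisms to pick $b\in I$ with $1_{\cC_\mu}b=d_\mu$ for $\mu\in M_1$ and $c\in J$ with $1_{\cC_\mu}c=d_\mu$ for $\mu\in M_2$ (the two prescriptions agree on the overlap $M_1\cap M_2$). Set $a=b-1_Jb+c$, which lies in $I+J$ since $1_Jb\in I$. A direct check of $1_{\cC_\mu}a$ in the three cases then gives $d_\mu$ throughout: for $\mu\in M_1\setminus M_2$ one uses $1_{\cC_\mu}1_J=0$ (so the $1_Jb$ and $c$ contributions drop out); for $\mu\in M_2\setminus M_1$ one uses $1_{\cC_\mu}1_I=0$ (so $1_{\cC_\mu}b=0$); and for $\mu\in M_1\cap M_2$ one uses $1_{\cC_\mu}1_J=1_{\cC_\mu}$, valid since $\cC_\mu\sst J$, together with $1_{\cC_\mu}b=1_{\cC_\mu}c=d_\mu$, giving $d_\mu-d_\mu+d_\mu=d_\mu$. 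Hence $\psi$ is a projection-respecting isomorphism, unique by the first paragraph, which is \cref{I+J-cong-prod_(Om_1-cup-Om_2)}.
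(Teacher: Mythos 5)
Your proof is correct, but it takes a genuinely different route from the paper's. The paper argues structurally: $I+J$ is unital with identity $1_I+1_J-1_I1_J$, and $I+J=I\oplus J'$ with $J'=J(1_J-1_I1_J)$; the given isomorphism on $J$ is then restricted to $J'\cong\prod_{\mu\in M_2\setminus M_1}\cC_\mu$ (citing \cref{eA=prod_(lb-in-Lb')A_lb}) and the two products are concatenated. You instead show directly that the canonical projection-respecting map $\psi$ is bijective; your preimage $a=b-1_Jb+c$ is essentially the $I\oplus J'$ decomposition in disguise. Your observation about indecomposability deserves emphasis: the orthogonality $1_{\cC_\mu}1_{\cC_\nu}=0$ for $\mu\neq\nu$ that drives your cross-term vanishing is \emph{not} among the lemma's stated hypotheses (``pairwise distinct unital ideals''), and the lemma is literally false without it --- take $\cC=\bbZ\times\bbZ$, $\cC_1=\bbZ\times 0$, $\cC_2=\cC$, $I=\cC\cong\cC_2$, $J=\cC_1$: all hypotheses hold, yet $I+J=\cC\not\cong\cC_1\times\cC_2$. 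The paper's proof has the same hidden dependence, since \cref{eA=prod_(lb-in-Lb')A_lb} is established for products of indecomposable blocks and identifying the support of $1_J-1_I1_J$ is exactly your cross-term claim; in the only application, \cref{B-embeds-into-prod_(g-in-Lb')A_g}, the $\cC_\mu$ are indeed blocks, so the defect is in the statement's phrasing, not in your argument. What your route buys is that uniqueness and the projection-respecting property come for free (the map is the canonical one), whereas the paper must remark separately that its composite isomorphism can be chosen to respect projections; what the paper's route buys is brevity.
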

\begin{proof} 
It is readily seen that $I+J$ is a unital ring with unity element $1_I+1_J-1_I1_J$ and $I+J=I\oplus J'$, where $J'=J(1_J-1_I1_J)$. Therefore, the isomorphism $J\cong\prod_{\mu\in M_2}\cC_\mu$ restricts to
$$
J'\cong\prod_{\mu\in M_2\setminus M_1}\cC_\mu\sst\prod_{\mu\in M_2}\cC_\mu
$$
(see \cref{eA=prod_(lb-in-Lb')A_lb}). Then
\begin{align*}
I+J=I\oplus J'\cong\left(\prod_{\mu\in M_1}\cC_\mu\right)\oplus\left(\prod_{\mu\in M_2\setminus M_1}\cC_\mu\right)&\cong\left(\prod_{\mu\in M_1}\cC_\mu\right)\times\left(\prod_{\mu\in M_2\setminus M_1}\cC_\mu\right), 
\end{align*}
the latter being isomorphic to $\prod_{\mu\in M_1\sqcup(M_2\setminus M_1)}\cC_\mu$, which proves \cref{I+J-cong-prod_(Om_1-cup-Om_2)}. Moreover, the isomorphism can be chosen in such a way that it respects projections, provided that the isomorphisms \cref{I-cong-prod-and-J-cong-prod} have this property.
\end{proof}

\begin{prop}\label{B-embeds-into-prod_(g-in-Lb')A_g}
 Let $\A$ be a product $\prod_{g\in\Lb}\A_g$ of not necessarily commutative indecomposable unital rings, $\af$ a transitive unital partial action of $G$ on $\A$ and $(\bt,\B)$ an enveloping action of $(\af,\A)$ with $\A\sst\B$. Then $\B$ embeds as an ideal into $\prod_{g\in\Lb'}\A_g$, where $\Lb'$ was defined before formula \cref{A_g=af(A_1)} and $\A_g$ denotes the ideal\footnote{This does not conflict with \cref{A_g=af(A_1)}, because $\af_g(\A_1)=\A_g\sst\A$ for $g\in\Lb$, so $\bt_g(A_1)=\af_g(\A_1)$.} $\bt_g(\A_1)$ in $\B$. Moreover, $\M(\B)\cong\prod_{g\in\Lb'}\A_g$, and $\bt^*$ is transitive, when seen as an action of $G$ on $\prod_{g\in\Lb'}\A_g$.
\end{prop}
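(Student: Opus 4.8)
The plan is to exhibit $\B$ as a directed union of the ideals $\bt_x(\A)$, $x\in G$, each of which is a product of the unital ideals $\A_{g'}=\bt_{g'}(\A_1)$, $g'\in\Lb'$, and then to invoke \cref{sumIdeals}. The first point is that these ideals are pairwise distinct. Since each $\A_{g'}$ is an isomorphic copy of the block $\A_1$, it is indecomposable, so two of them are either orthogonal or equal; and $\bt_{g'}(\A_1)=\bt_g(\A_1)$ forces $\bt_{g\m g'}(\A_1)=\A_1$. Writing $y=g\m g'$, this gives $\A_1=\bt_y(\A_1)\sst\A\cap\bt_y(\A)=\D_y$, whence, applying $\bt_{y\m}$, also $\A_1\sst\D_{y\m}$ with $\af_y(\A_1)=\bt_y(\A_1)=\A_1$, i.e.\ $y\in H$; as $\Lb'$ is a transversal of $H$ this yields $g=g'$. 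Thus $\{\A_{g'}\mid g'\in\Lb'\}$ is a family of pairwise distinct (indeed pairwise orthogonal) unital ideals of $\B$, each generated by the central idempotent $\bt_{g'}(1_{\A_1})$.

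Next I would analyse a single $\bt_x(\A)$. By \cref{A_g=af(A_1)} the blocks of $\A=\prod_{g\in\Lb}\A_g$ are exactly $\A_g=\bt_g(\A_1)$, $g\in\Lb$, so $\bt_x(\A_g)=\bt_{xg}(\A_1)=\A_{\ol{xg}}$, and as $g$ runs over $\Lb$ the elements $\ol{xg}$ run injectively over a subset $M_x\sst\Lb'$. Because $\bt_x$ is a ring isomorphism carrying the block decomposition of $\A$ onto the family $\{\A_{g'}\mid g'\in M_x\}$, it induces an isomorphism $\bt_x(\A)\cong\prod_{g'\in M_x}\A_{g'}$ which respects projections in the sense preceding \cref{sumIdeals} (projection onto $\A_{g'}$ is multiplication by $1_{\A_{g'}}$). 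Since $1\in\Lb$ gives $\ol x\in M_x$, and $\ol x$ exhausts $\Lb'$ as $x$ ranges over $G$, one has $\bigcup_{x\in G}M_x=\Lb'$.

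I would then apply \cref{sumIdeals} to the finite partial sums. For finite $F\sst G$, an iteration of \cref{sumIdeals} shows that the unital ideal $\sum_{x\in F}\bt_x(\A)$ is isomorphic, respecting projections, to $\prod_{g'\in M_F}\A_{g'}$ with $M_F=\bigcup_{x\in F}M_x$. Here the uniform map $\phi\colon b\mt(1_{\A_{g'}}b)_{g'\in\Lb'}$ is exactly the projection-respecting isomorphism followed by inclusion, so it is injective on each such sum; as $\B=\sum_{x\in G}\bt_x(\A)$ is the directed union of these finite sums, $\phi$ embeds $\B$ into $\cR:=\prod_{g'\in\Lb'}\A_{g'}$ with image the directed union $\bigcup_F\prod_{g'\in M_F}\A_{g'}$, an ideal of $\cR$. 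This proves the first assertion. The main obstacle I expect lies precisely here: \cref{sumIdeals} is a two-ideal statement, so one must argue by a directed colimit over finite $F$, and it is the uniform description of $\phi$ by the central idempotents $1_{\A_{g'}}$ that makes the finite isomorphisms automatically compatible and the limiting image a genuine ideal rather than a mere subring. Note also that $\phi(\B)\supseteq\A_{g'}$ for every $g'$, so the annihilator of $\phi(\B)$ in $\cR$ is zero.

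Finally, the faithful action of $\cR$ on its ideal $\B$ gives an embedding $\cR\hookrightarrow\M(\B)$. Conversely, for $u\in\M(\B)$ each idempotent $1_{\A_{g'}}$ commutes with $u$ by \cref{e-comm-with-u}, so $r_{g'}:=u(1_{\A_{g'}})\in\A_{g'}$ and $u$ acts on the unital ideal $\A_{g'}$ as multiplication by $r_{g'}$; since $\B$ is determined by its projections $1_{\A_{g'}}b$ (the annihilator being zero), $u$ is recovered from $(r_{g'})\in\cR$, and $u\mt(r_{g'})$ inverts the embedding, giving $\M(\B)\cong\cR$ (the commutativity of $\M(\B)$ being guaranteed by \cref{M(A)-comm}). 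Transitivity of $\bt^*$ then follows by transport of structure: $\bt_g(\A_{g'})=\bt_{gg'}(\A_1)=\A_{\ol{gg'}}$ shows that $\bt$ permutes the blocks of $\cR$ by the natural transitive action $g'\mt\ol{gg'}$ of $G$ on $\Lb'=G/H$, so under $\M(\B)\cong\cR$ the action $\bt^*$ of \cref{gamma^*} permutes the factors $\A_{g'}$ transitively, which is exactly transitivity of $\bt^*$ as a (global) partial action on $\prod_{g'\in\Lb'}\A_{g'}$.
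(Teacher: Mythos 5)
Your proposal is correct and follows essentially the same route as the paper's proof: the uniform map $\phi(b)=(1_{\A_{g'}}b)_{g'\in\Lb'}$ determined by the central idempotents, the block analysis $\bt_x(\A)\cong\prod_{g'\in M_x}\A_{g'}$, iterated application of \cref{sumIdeals} to the finite sums $\sum_{x\in F}\bt_x(\A)$, and the inverse identification $\M(\B)\ni u\mt(u1_{\A_{g'}})_{g'}$. Where the paper simply cites (ii) of \cite[Lemma 8.3]{DES2} and says that distinctness of the $\A_{g'}$ ``follows from the definition of $\Lb'$'', you argue in-line (pairwise distinctness via $y=g\m g'\in H$; the projection-respecting isomorphism induced by $\bt_x$), and those in-line arguments are fine.

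There is, however, one elided verification, and it is exactly the step the paper works out in a displayed computation: your claim that $\phi$ restricted to $I_F=\sum_{x\in F}\bt_x(\A)$ ``is exactly'' the \cref{sumIdeals} isomorphism $\psi_F$ followed by inclusion. Respecting projections only pins down the components of $\phi(b)$ at $g'\in M_F$; it does not by itself give $1_{\A_{g'}}b=0$ for $g'\in\Lb'\setminus M_F$. Without that vanishing you still get injectivity of $\phi$ (the $M_F$-components already determine $b$ through $\psi_F$), but not the description of the image as $\bigcup_F\prod_{g'\in M_F}\A_{g'}$ — and that description is what makes $\phi(\B)$ an \emph{ideal} of $\prod_{g'\in\Lb'}\A_{g'}$, which you then use again when you let the product ring act as multipliers on $\B$. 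The paper proves precisely this point: for $g\in\Lb'\setminus\Lb''$ and $b=\sum_{i=1}^k\bt_{x_i}(a_i)$ one has
\begin{align*}
1_{\A_g}b=\sum_{i=1}^k\bt_{x_i}\bigl(1_{\A_{\ol{x\m_i g}}}a_i\bigr)=0,
\end{align*}
since $\ol{x\m_i g}\notin\Lb$ for all $i$. The fix is available from your own first paragraph: for $h\in\Lb'\setminus\Lb$ one has $\A_h\A=0$, because the idempotent $1_{\A_h}1_\A$ is $0$ or $1_{\A_h}$ by indecomposability of $\A_h$, and the latter would make $\A_h$ a unital indecomposable ideal of $\A$, hence a block $\A_g$ with $g\in\Lb$, contradicting the pairwise distinctness you established; $\bt$-equivariance then yields the displayed vanishing. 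With that line added, your argument is complete.
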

\begin{proof}
 As it was explained before \cref{sumIdeals}, there is a unique homomorphism $\phi:\B\to \prod_{g\in\Lb'}\A_g$, which respects projections. We shall prove that $\phi$ is injective. Since $\B=\sum_{g\in G}\bt_g(\A)$, each element of $\B$ belongs to an ideal $I$ of $\B$ of the form $\sum_{i=1}^k\bt_{x_i}(\A)$, $x_1,\dots,x_k\in G$. Therefore, it suffices to show that the restriction of $\phi$ to any such $I$ is injective. Using (ii) of \cite[Lemma 8.3]{DES2}, we may construct for any $i=1,\dots,k$ an isomorphism
 $$
 \bt_{x_i}(\A)=\bt_{x_i}\left(\prod_{g\in\Lb}\A_g\right)\cong\prod_{g\in\Lb}\bt_{x_i}(\A_g)=\prod_{g\in\Lb}\A_{\ol{x_ig}},
 $$
 which respects projections. Notice that it follows from the definition of $\Lb'$ that the ideals $\A_g$, $g\in\Lb'$, are pairwise distinct. Hence by \cref{sumIdeals} there is an isomorphism
 \begin{align}\label{B-cong-prod_(g-in_Lb'')A_g}
  \psi:I\to\prod_{g\in\Lb''}\A_g,
 \end{align}
 where $\Lb''=\{\ol{x_ig}\mid g\in\Lb,\ i=1,\dots,k\}\sst\Lb'$, and it also respects projections. We claim that the restriction of $\phi$ to $I$ coincides with $\psi$, if one understands the product in the right-hand side of \cref{B-cong-prod_(g-in_Lb'')A_g} as an ideal in $\prod_{g\in\Lb'}\A_g$ (see \cref{eA=prod_(lb-in-Lb')A_lb}). Indeed, for all $g\in\Lb''$ and $b\in I$ one has
 \begin{align*}
  \pr_g\circ\psi(b)=1_{\A_g}b=\pr_g\circ\phi(b),
 \end{align*}
 because $\phi$ and $\psi$ respect projections. Now if $g\in\Lb'\setminus\Lb''$, then $\ol{x\m_ig}\not\in\Lb$ for all $i=1,\dots,k$, since otherwise $g=\ol{x_i\ol{x\m_ig}}\in\Lb''$. Hence, for all $b=\sum_{i=1}^k\beta_{x_i}(a_i)\in I$ ($a_i\in\A$) in view of (ii) of \cite[Lemma 8.3]{DES2}
 \begin{align*}
  \pr_g\circ\phi(b)=1_{\A_g}b=\sum_{i=1}^k\bt_{x_i}\left(1_{\A_{\ol{x\m_ig}}}a_i\right)=\sum_{i=1}^k\bt_{x_i}(0)=0.
 \end{align*} 
 This proves the claim, and thus injectivity of $\phi$. Moreover, since $\phi(I)=\prod_{g\in\Lb''}\A_g$ is an ideal in $\prod_{g\in\Lb'}\A_g$, it follows that $\phi(\B)$ is also an ideal in $\prod_{g\in\Lb'}\A_g$.
 
 Regarding the second statement of the proposition, notice that each element of $\prod_{g\in\Lb'}\A_g$ acts as a multiplier of $\B$, as $\phi(\B)$ is an ideal in $\prod_{g\in\Lb'}\A_g$. Conversely, let $w\in\M(\B)$. Then $w1_{\A_g}=w1_{\A_g}\cdot 1_{\A_g}\in\A_g$ for all $g\in\Lb'$. Define $a\in\prod_{g\in\Lb'}\A_g$ by $\pr_g(a)=w1_{\A_g}$. We need to show that $\phi(wb)=a\phi(b)$ and $\phi(bw)=\phi(b)a$. Indeed, using the fact that $\phi$ respects projections, we get
 \begin{align*}
  \pr_g(\phi(wb))=1_{\A_g}\cdot wb=w1_{\A_g}\cdot 1_{\A_g}b=w1_{\A_g}\cdot\pr_g(\phi(b))=\pr_g(a\phi(b))
 \end{align*}
for all $g\in\Lb'$. Similarly $\pr_g(\phi(bw))=\pr_g(a\phi(b))$ for arbitrary $g\in\Lb'$. The transitivity of $\bt^*$ easily follows from the definition of $\A_g$ for $g\in\Lb'$.
\end{proof}

\begin{thrm}\label{uniqueness-of-glob}
 Let $\A$ be a product $\prod_{g\in\Lb}\A_g$ of commutative indecomposable unital rings, $\af$ a unital partial action of $G$ on $\A$ and $w_i\in Z^n(G,\A)$, $i=1,2$ ($n>0$). Suppose that $(\bt,\B)$ is an enveloping action of $(\af,\A)$ and $u_i\in Z^n(G,\U{\M(\B)})$ is a globalization of $w_i$, $i=1,2$. If $w_1$ is cohomologous to $w_2$, then $u_1$ is cohomologous to $u_2$. In particular any two globalizations of the same partial $n$-cocycle are cohomologous.
\end{thrm}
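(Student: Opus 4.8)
The plan is to collapse the whole statement to a single assertion about coboundaries, and then to produce the globalizing cochain by transporting a representative of $c$ across the blocks $\A_g$ by means of the (transitive) global action.

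\smallskip
\noindent\emph{Reduction.} First I would observe that $u:=u_1u_2\m$ is a globalization of $w:=w_1w_2\m$. Indeed, $u\in Z^n(G,\U{\M(\B)})$ since cocycles form a group, and applying the ring homomorphism $\f$ to $w_1w_2\m$ while using the defining identity \cref{w-is-restr-of-u} for $u_1,u_2$ together with the fact that the central idempotent $\f(1_{(x_1,\dots,x_n)})$ commutes with every multiplier (\cref{e-comm-with-u}) yields $\f(w(x_1,\dots,x_n))=\f(1_{(x_1,\dots,x_n)})u(x_1,\dots,x_n)$; so $u$ globalizes $w$. Since $w_1$ and $w_2$ are cohomologous, $w=\dl^{n-1}c$ for some $c\in C^{n-1}(G,\A)$. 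Thus the entire theorem — including the ``in particular'' clause, which is the case $w_1=w_2$, where $w=e_n=\dl^{n-1}e_{n-1}$ — follows once I establish:
$$(\star)\qquad\text{every globalization of a partial $n$-coboundary $\dl^{n-1}c$ is a $\dl_{\bt^*}^{n-1}$-coboundary.}$$

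\smallskip
\noindent\emph{Set-up and candidate cochain.} To prove $(\star)$ I would first reduce to a transitive $\af$ exactly as at the end of \cref{glob-exists} (split $\A$ into $\af$-invariant ideals and project), realise the enveloping action inside $\cF$, and invoke \cref{B-embeds-into-prod_(g-in-Lb')A_g}, so that $\M(\B)\cong\prod_{g\in\Lb'}\A_g$ with $\bt^*$ transitive on the blocks. Write $\tl c=c+1_\A-1_{(x_1,\dots,x_{n-1})}\in\U\A$ for the canonical extension of $c$ (as in \cref{tilde-alpha-def}). Setting $\0'_g(a)=\bt_g(\pr_1 a)$ for $g\in\Lb'$ (the $\bt$-analogue of \cref{0_g-def}), the natural candidate is the coinduction $v=\prod_{g\in\Lb'}\0'_g\circ\tl c\circ\tau_{n-1}^g$; as in the proof of \cref{w'-tilde-is-quasi-cocycle} each factor is a unit of $\A_g$, so $v\in C^{n-1}(G,\U{\M(\B)})$.

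\smallskip
\noindent\emph{The computation.} I would then take the quasi-cocycle extension $\wtl w=\tl\dl^{n-1}\tl c$, where $\tl\dl^{n-1}$ is the $\tl\af$-coboundary of \cref{tilde-delta-w-resp-to-tl-af}. This $\wtl w$ extends $w$ (multiply by $1_{(x_1,\dots,x_n)}$, on whose support $\tl\af$ and $\tl c$ restrict to $\af(1_{x_1\m}\cdot)$ and $c$) and satisfies the quasi-cocycle identity \cref{cocycle-ident-for-tilde-w} by the very $\dl\circ\dl$-cancellation already performed in \cref{glob-exists}, where the factor \cref{cboper-tilde-delta} is shown to equal $1_{x_1}$. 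Denoting by $\Theta(\wtl w)$ the globalization attached to $\wtl w$ via the formula $\Theta(\wtl w)(x_1,\dots,x_n)|_t=\wtl w(x_1,\dots,x_n)\,(\tl\dl^n\wtl w)(t\m,x_1,\dots,x_n)\m$ from the proof of \cref{u-is-n-cocycle}, I would verify $\dl_{\bt^*}^{n-1}v=\Theta(\wtl w)$ by a computation strictly parallel to \cref{w'-tilde-is-quasi-cocycle}: push $\bt^*_{x_1}$ inside the product through the $\bt^*$-analogue of \cref{apply-af-to-prod-and-switch-with-0}, and collapse the telescoping products of $\tl c\circ\s$-terms using \cref{formulas-for-eta_n^g}. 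This exhibits the globalization $\Theta(\wtl w)$ of $w$ as a $\bt^*$-coboundary.

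\smallskip
\noindent\emph{Main obstacle.} The genuine difficulty is that the \emph{given} globalization $u$ need not be the particular one $\Theta(\wtl w)$: any two globalizations of the same $w$ differ by a globalization of $e_n$. Hence $(\star)$ ultimately rests on showing that an \emph{arbitrary} globalization of $e_n$ is a $\bt^*$-coboundary — equivalently, that any two quasi-cocycle extensions $\wtl w,\wtl w'$ of one and the same $w$ yield cohomologous globalizations. Here $d=\wtl w(\wtl w')\m$ is a quasi-cocycle extension of $e_n$ that is trivial on the base block, and $\Theta(d)=\Theta(\wtl w)\Theta(\wtl w')\m$; I would express $\Theta(d)$ again as $\dl_{\bt^*}^{n-1}$ of a coinduced cochain, whose existence is \emph{forced} by the transitivity of $\bt^*$ on $\{\A_g\}_{g\in\Lb'}$ (a Shapiro-type phenomenon: the class of $\Theta(d)$ is detected by its $\A_1$-component, a cocycle of the stabiliser $H$, which the quasi-cocycle identity for $d$ renders an $H$-coboundary). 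Controlling these off-base components uniformly inside the explicit formulas is the crux; the remainder is the bookkeeping of \cref{sec-lemmas} transcribed from $\af$ to $\bt^*$.
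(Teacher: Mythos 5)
Your reduction to $(\star)$ is sound, and the middle part of your plan (the coinduced cochain $v$, the extension $\wtl w=\tl\dl^{n-1}\tl c$, and the identity $\dl^{n-1}_{\bt^*}v=\Theta(\wtl w)$) is a plausible transcription of the machinery of \cref{sec-lemmas,existence-glob}. But all of that only produces \emph{one} globalization of the coboundary $\dl^{n-1}c$ that is a $\bt^*$-coboundary --- essentially a rerun of the existence argument of \cref{glob-exists}. The entire content of the uniqueness theorem sits in the step you yourself flag as the ``main obstacle'', and for it you give no proof: the assertion that the class of an arbitrary globalization of $e_n$ is ``detected by its $\A_1$-component'' and that the quasi-cocycle identity ``renders it an $H$-coboundary'' is a heuristic, not an argument. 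You neither show that the restriction of $d$ to $H^n$ cobounds over $H$, nor --- even granting that --- how to promote an $H$-cobounding cochain to a $G$-cochain on $\prod_{g\in\Lb'}\A_g$ whose $\bt^*$-coboundary recovers $\Theta(d)$ with all the components on blocks $\A_g$, $g\in\Lb'\setminus\Lb$, under control (these components are invisible to $w$, and controlling them is precisely the difficulty). Your reformulation via quasi-cocycle extensions also tacitly assumes that \emph{every} globalization of $e_n$ arises as $\Theta(d)$ for its induced extension $d$, which is true but itself requires an argument (a multiplier cocycle is determined by its action on $\sum_t\bt_t(\f(\A))$ via the cocycle identity and \cref{u-phi(a)=phi(a-tilde-w)}); you do not supply it. So the proposal has a genuine gap exactly at its decisive point.

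The paper closes this gap by a different and much shorter maneuver, which does not pass through $(\star)$ at all: instead of coinducing the partial cochain $c$, it coinduces the \emph{given global cocycles themselves}. With $\vt_g=\bt_g\circ\pr_1$ and $u'_i=\prod_{g\in\Lb'}\vt_g\circ u_i\circ\tau^g_n$, \cref{w'-cohom-w} --- applied to the transitive action $\bt^*$ on $\M(\B)=\prod_{g\in\Lb'}\A_g$ furnished by \cref{B-embeds-into-prod_(g-in-Lb')A_g} --- gives $u_i=\dl^{n-1}\e_i\cdot u'_i$, so $u_i$ is cohomologous to $u'_i$. Since $\tau^g_n$ takes values in $H^n$ and $\pr_1\circ u_i=\pr_1\circ w_i$ on $H^n$ by \cref{w-is-restr-of-u}, the cocycle $u'_i$ depends \emph{only} on $w_i$; this one observation disposes of the arbitrariness of the given globalizations at a stroke (the off-base data you struggle with is exactly what the coboundaries $\dl^{n-1}\e_i$ absorb). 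Then $w_2=w_1\cdot\dl^{n-1}\xi$ yields $u'_2=u'_1\cdot\dl^{n-1}\xi'$ with $\xi'$ the coinduction of $\xi$, using the global case of \cref{apply-af-to-prod-and-switch-with-0} together with \cref{eta_n^g=eta_n-1^ol(x-inv-g)}. If you want to salvage your outline, replacing your Shapiro-type sketch by this ``coinduce $u_i$ and evaluate on $H$-tuples'' step is the missing idea.
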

\begin{proof}
 Let $\af$ be transitive. Thanks to \cref{B-embeds-into-prod_(g-in-Lb')A_g} we may assume, up to an isomorphism, that $\M(\B)=\prod_{g\in\Lb'}\A_g\supseteq\A$. Define
\begin{align}\label{u'_i-def}
 u'_i(x_1,\dots,x_n)=\prod_{g\in\Lb'}\vt_g\circ u_i\circ\tau_n^g(x_1,\dots,x_n),\ i=1,2,
\end{align}
where $\vt_g$ is a homomorphism $\M(\B)\to\M(\B)$ given by
\begin{align}\label{vt_g-def}
 \vt_g=\bt_g\circ\pr_1.
\end{align}
Since $u'_i$ has the same construction as $w'$ from \cref{sec-lemmas} (see \cref{w'-def}), one has by \cref{w'-cohom-w} that $u'_i\in Z^n(G,\U{\M(\B)})$ and $u_i$ is cohomologous to $u'_i$, $i=1,2$. 

It suffices to prove that $u'_1$ is cohomologous to $u'_2$, provided that $w_1$ is cohomologous to $w_2$. Observe, in view of \cref{w-is-restr-of-u}, that for arbitrary $h_1,\dots,h_n\in H$
$$
 \pr_1\circ u_i(h_1,\dots,h_n)=\pr_1\left(u_i(h_1,\dots,h_n)1_{(h_1,\dots,h_n)}\right)=\pr_1\circ w(h_1,\dots,h_n).
$$
Together with \cref{u'_i-def,vt_g-def} this implies that 
\begin{align}\label{u'_i-in-terms-of-w} 
 u'_i(x_1,\dots,x_n)=\prod_{g\in\Lb'}\vt_g\circ w_i\circ\tau_n^g(x_1,\dots,x_n),\ i=1,2.
\end{align}
Let $w_2=w_1\cdot\dl^{n-1}\xi$ for some $\xi\in C^{n-1}(G,\A)$. Since $\vt_g$ is a homomorphism, one immediately sees from \cref{u'_i-in-terms-of-w} that $u'_2=u'_1(\dl^{n-1}\xi)'$, where
$$
 (\dl^{n-1}\xi)'(x_1,\dots,x_n)=\prod_{g\in\Lb'}\vt_g\circ (\dl^{n-1}\xi)\circ\tau_n^g(x_1,\dots,x_n).
$$
We shall show that 
\begin{align}\label{(dl^(n-1)-xi)'=dl^(n-1)-xi'}
 (\dl^{n-1}\xi)'=\dl^{n-1}\xi'
\end{align}
with
$$
 \xi'(x_1,\dots,x_{n-1})=\prod_{g\in\Lb'}\vt_g\circ\xi\circ\tau_{n-1}^g(x_1,\dots,x_{n-1}).
$$
Taking into account the fact that $\vt_g$ is a homomorphism once again and interchanging the left-hand side and the right-hand side of \cref{(dl^(n-1)-xi)'=dl^(n-1)-xi'}, we may reduce \cref{(dl^(n-1)-xi)'=dl^(n-1)-xi'} to
\begin{align}
 &\beta_{x_1}\left(\prod_{g\in\Lb'}\vt_g\circ\xi\circ\tau^g_{n-1}(x_2,\dots,x_n)\right)\notag\\
 &=\prod_{g\in\Lb'}\vt_g\circ \bt_{\eta_1^g(x_1)}\circ\xi(\eta^g_2(x_1,x_2),\dots,\eta^g_n(x_1,\dots,x_n)),\label{beta_x_1(prod_g-in-Lb')=prod_g-in-Ln'}
\end{align}
whose right-hand side is
\begin{align*}
 \prod_{g\in\Lb'}\vt_g\circ \bt_{\eta_1^g(x_1)}\circ\xi\circ\tau^{\ol{x\m_1g}}_{n-1}(x_2,\dots,x_n)
\end{align*}
by \cref{eta_n^g=eta_n-1^ol(x-inv-g)}. Now it is readily seen that \cref{beta_x_1(prod_g-in-Lb')=prod_g-in-Ln'} follows from the global case of \cref{apply-af-to-prod-and-switch-with-0} (with $\af$ and $\0$ replaced by $\bt^*$ and $\vt$, respectively).

The non-transitive case reduces to the transitive one, using the same argument as in \cref{glob-exists}.
\end{proof}

\begin{cor}\label{H^n(G_A)-cong-H^n(G_B)}
 Let $\A$ be a product $\prod_{g\in\Lb}\A_g$ of commutative indecomposable unital rings, $\af$ a unital partial action of $G$ on $\A$ and $(\bt,\B)$ an enveloping action of $(\af,\A)$. Then the partial cohomology group $H^n(G,\A)$ is isomorphic to the classical (global) cohomology group $H^n(G,\U{\M(\B)})$.
\end{cor}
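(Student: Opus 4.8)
The plan is to realise the isomorphism as the globalization map, whose inverse is a restriction map. Write $\f:\A\to\B$ for the embedding that exhibits $\af$ as an admissible restriction of the enveloping action $(\bt,\B)$, recall that $\B$ embeds as an ideal of $\M(\B)$ (and $\A$ is an ideal of $\B$), and that $\bt^*$ makes $\U{\M(\B)}$ a $G$-module. First I would define a restriction map $\rho:C^n(G,\U{\M(\B)})\to C^n(G,\A)$ by declaring $\rho(u)(x_1,\dots,x_n)$ to be the unique element of $\A$ with $\f(\rho(u)(x_1,\dots,x_n))=\f(1_{(x_1,\dots,x_n)})u(x_1,\dots,x_n)$. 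Since $1_{(x_1,\dots,x_n)}=1_{(x_1,\dots,x_n)}1_\A$, the right-hand side equals $\f(1_{(x_1,\dots,x_n)})\cdot(\f(1_\A)u(x_1,\dots,x_n))$, and $\f(1_\A)u(x_1,\dots,x_n)$ lies in $\f(\A)$ because $\f(1_\A)\in\B$, $\B$ is an ideal of $\M(\B)$, and $\f(1_\A)$ is the identity of the ideal $\f(\A)$ of $\B$; hence $\rho(u)$ is well defined. Using \cref{e-comm-with-u} and the invertibility of $u$ one checks that $\rho(u)(x_1,\dots,x_n)\in\U{\D_{(x_1,\dots,x_n)}}$, so $\rho(u)\in C^n(G,\A)$, and $\rho$ is evidently a homomorphism of abelian groups for pointwise multiplication.

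The technical heart, and the step I expect to be the main obstacle, is to show that $\rho$ is a cochain map, i.e. $\rho\circ\dl^n=\dl^n\circ\rho$ (so in particular $\rho(Z^n)\sst Z^n$ and $\rho(B^n)\sst B^n$). Applying the injective map $\f$ and expanding $\f((\dl^n\rho(u))(x_1,\dots,x_{n+1}))$, the leading factor is rewritten by the commutative multiplier version of \cref{phi(af_g(a1_g-inv))}, namely $\f(\af_{x_1}(1_{x\m_1}a))=\bt_{x_1}(\f(a))\f(1_\A)$, after which the automorphism $\bt^*_{x_1}$ splits the product into an idempotent part and a $u$-part. Collecting terms (the idempotents $\f(1_{(\dots)})$ commute with multipliers by \cref{e-comm-with-u}), the $u$-part assembles into the global coboundary $\dl^n u$ formed with the action $\bt^*$, while the idempotent part telescopes to $\f(1_{(x_1,\dots,x_{n+1})})$ exactly as in the identity $\dl^n e_n=e_{n+1}$. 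This yields $\f((\dl^n\rho(u))(x_1,\dots,x_{n+1}))=\f(1_{(x_1,\dots,x_{n+1})})(\dl^n u)(x_1,\dots,x_{n+1})=\f((\rho(\dl^n u))(x_1,\dots,x_{n+1}))$, and injectivity of $\f$ gives $\dl^n\rho=\rho\dl^n$. This computation runs parallel to \cref{(dl^(n-1)-xi)'=dl^(n-1)-xi'} already carried out in the proof of \cref{uniqueness-of-glob}. Consequently $\rho$ induces a homomorphism $[\rho]:H^n(G,\U{\M(\B)})\to H^n(G,\A)$.

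Finally I would show $[\rho]$ is bijective by producing the globalization map as a two-sided inverse. By \cref{glob-exists} every $w\in Z^n(G,\A)$ admits a globalization $u$, and by \cref{uniqueness-of-glob} the class $[u]$ depends only on $[w]$; this defines $\Psi:H^n(G,\A)\to H^n(G,\U{\M(\B)})$, $\Psi([w])=[u]$. For $[\rho]\circ\Psi=\id$, observe that if $u$ globalizes $w$ then $\f(\rho(u))=\f(1_{(\dots)})u=\f(w)$ by \cref{w-is-restr-of-u}, whence $\rho(u)=w$ and $[\rho]([u])=[w]$. For $\Psi\circ[\rho]=\id$, observe that for any $u\in Z^n(G,\U{\M(\B)})$ the cocycle $u$ is itself a globalization of $\rho(u)$, since its defining identity \cref{w-is-restr-of-u} holds by the very definition of $\rho$; hence $\Psi([\rho(u)])=[u]$ by \cref{uniqueness-of-glob}.

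Therefore $[\rho]$ is a bijective homomorphism of abelian groups, hence an isomorphism $H^n(G,\U{\M(\B)})\cong H^n(G,\A)$ with inverse $\Psi$, as claimed. Note that \cref{glob-exists,uniqueness-of-glob} and the formula \cref{phi(af_g(a1_g-inv))} hold without assuming $\af$ transitive, and $\rho$ was constructed using only $\f$ and the idempotents $1_{(\dots)}$; so no separate reduction is required, although one may alternatively pass to the transitive case by decomposing $\A$ into transitive $\af$-invariant factors exactly as in the proof of \cref{glob-exists}.
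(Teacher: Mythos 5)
Your proposal is correct, and it rests on exactly the same two pillars as the paper's own (very terse) proof: \cref{glob-exists} to produce a globalization and \cref{uniqueness-of-glob} to make the class of the globalization well defined. The organization, however, is genuinely different, and in a useful way. The paper takes the globalization map $H^n(G,\A)\to H^n(G,\U{\M(\B)})$ as primary: it gets multiplicativity by observing that the explicit constructions \cref{u-def-n>0,w'-tilde-def} respect products, asserts injectivity rather briskly, and proves surjectivity by restricting a global cocycle via \cref{w-is-restr-of-u} and invoking \cref{uniqueness-of-glob}. You instead make the \emph{restriction} $\rho$ primary and verify at the cochain level that $\rho\circ\dl^n=\dl^n\circ\rho$ (your computation is sound: the leading factor splits via $\f(\af_{x_1}(1_{x\m_1}a))=\bt_{x_1}(\f(a))\f(1_\A)$ and \cref{gamma^*}, the idempotents absorb into $\f(1_{(x_1,\dots,x_{n+1})})$ by \cref{product-ideals}, and the multiplier manipulations are the same ones used in the proof of \cref{w-glob-iff-exists-tilde-w}). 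This buys two things the paper leaves implicit: the homomorphism property comes for free from the evident multiplicativity of $\rho$, with no appeal to the explicit formulas for $\wtl{w'}$ and $u$; and, more substantively, injectivity of the globalization map — which the paper only asserts as a consequence of \cref{glob-exists,uniqueness-of-glob} — is supplied cleanly by your identity $\rho\circ\Psi=\id$ verified at the level of cocycles (indeed, a cochain-level $\rho$ is what one needs to restrict the coboundary relating two cohomologous globalizations back down to $\A$). The only point to repair is the degenerate case $n=0$: you cite \cref{uniqueness-of-glob} for the well-definedness of $\Psi$, but that theorem is stated only for $n>0$; for $n=0$ one should instead invoke \cref{0-cocycle-is-globalizable}, which gives existence and literal (not just cohomological) uniqueness of the globalization, exactly as the paper does.
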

\begin{proof}
	Indeed, when $n>0$, it follows from \cref{uniqueness-of-glob,glob-exists} that there is a well-defined map $\Phi:H^n(G,\A)\to H^n(G,\U{\M(\B)})$ which sends the class of $w\in Z^n(G,\A)$ to the class of a globalization of $w$. The map $\Phi$ is injective as the ``restriction'' \cref{w-is-restr-of-u} commutes with the coboundary operator, so any two $n$-cocycles $u_1,u_2\in Z^n(G,\U{\M(\B)})$ which differ by an $n$-coboundary $v\in B^n(G,\U{\M(\B)})$ restrict to two partial $n$-cocycles from $Z^n(G,\A)$ which differ by the restriction of $v$, the latter being a partial $n$-coboundary from $B^n(G,\A)$. The constructions of $\wtl{w'}$ and $u$ clearly respect products (see \cref{u-def-n>0,w'-tilde-def}), so $\Phi$ is a monomorphism of groups. It is evidently surjective, as any $u\in Z^n(G,\U{\M(\B)})$ restricts to $w\in Z^n(G,\A)$ by means of \cref{w-is-restr-of-u}, and a globalization of $w$ is cohomologous to $u$ thanks to \cref{uniqueness-of-glob}. For the case $n=0$ (which holds in a more general situation) see \cref{H^0(G_A)-cong-H^0(G_M(B))}.
\end{proof}

\section{Example}\label{sec-exm}

In this section we apply our technique from \cref{notion-glob,sec-lemmas,existence-glob} in a concrete example.

Let $G=\langle g\mid g^3=e\rangle$ and $\B=\prod_{i=1}^3\A_i$, where each $\A_i$ is a copy of some commutative indecomposable unital ring $R$. We write an element of $\B$ as a triple $a=(a_1,a_2,a_3)$, where $a_i\in\A_i$, $i=1,2,3$. Consider the ``right shift'' action $\bt$ of $G$ on $\B$ given by
\begin{align*}
\bt_g(a_1,a_2,a_3)=(a_3,a_1,a_2).
\end{align*}
Denote by $\A$ the ideal $\A_1\times\A_2\times\{0_{\A_3}\}$ in $\B$, which henceforth will be identified with $\A_1 \times \A_2$ for notational purposes, and by $\af$ the (admissible) restriction of $\bt$ to $\A$. Then $\af=\{\af_g:\D_{g\m}\to\D_g\mid g\in G\}$ is a partial action of $G$ on $\A$, where $\D_e=\A$, $\af_e=\id_\A$, $\D_{g\m}=\A_1\times\{0_{\A_2}\}$, $\D_g=\{0_{\A_1}\}\times\A_2$ and $\af_g(a,0_{\A_2})=(0_{\A_1},a)$ for all $(a,0_{\A_2})\in\D_{g\m}$. By construction $(\B,\bt)$ is an enveloping action of $(\A,\af)$, the isomorphism between $\B$ and $\sum_{g\in G}\bt_g(\f(\A))\sst \cF(G,\A)$ is given by
\begin{align*}
\psi(a_1,a_2,a_3)=\f(a_1,0_{\A_2})+\bt_g(\f(a_2,0_{\A_2}))+\bt_{g\m}(\f(a_3,0_{\A_2})),
\end{align*} 
where $\f$ and $\bt$ are as in \cref{beta_x(f)_t,f(a)|_t=af_t-inv(1_ta)}. Observe that $\psi(a_1,a_2,a_3)$ is the function on $G$ with the following values:
\begin{align}
\psi(a_1,a_2,a_3)|_e&=(a_1,a_2),\label{psi(a_1_a_2_a_3)-on-e}\\
\psi(a_1,a_2,a_3)|_g&=(a_2,a_3),\label{psi(a_1_a_2_a_3)-on-g}\\
\psi(a_1,a_2,a_3)|_{g\m}&=(a_3,a_1).\label{psi(a_1_a_2_a_3)-on-g-inv}
\end{align}

\begin{exm}\label{exm-w-of-Z_3}
	Let $w\in Z^2(G,\A)$. Then $u\in Z^2(G,\B)$ given by
	\begin{align*}
	u(e,e)&=(w(e,e)_1,w(e,e)_2,w(e,e)_1),\\
	u(e,g)&=(w(e,e)_1,w(e,e)_2,w(e,e)_1),\\
	u(e,g\m)&=(w(e,e)_1,w(e,e)_2,w(e,e)_1),\\
	u(g,e)&=(w(e,e)_1,w(e,e)_1,w(e,e)_2),\\
	u(g\m,e)&=(w(e,e)_2,w(e,e)_1,w(e,e)_1),\\
	u(g,g\m)&=(w(e,e)_1,w(g,g\m)_2,w(e,e)_1),\\
	u(g\m,g)&=(w(g\m,g)_1,w(e,e)\m_2w(e,e)^2_1,w(e,e)_1),\\
	u(g,g)&=(w(e,e)_1,w(g\m,g)_1,w(g\m,g)_1),\\
	u(g\m,g\m)&=(w(e,e)_1,w(g\m,g)\m_1w(e,e)^2_1,w(e,e)_1),
	\end{align*}
	is a globalization of $w$.
\end{exm}
\begin{proof}
	Clearly, $\af$ is transitive, $H=\{e\}$, $\Lb'=\{e,g,g\m\}$, $\Lb=\{e,g\}\sst\Lb'$, $\bar x=x$ for all $x\in G$ and
	\begin{align*}
	\0_e(a_1,a_2)&=(a_1,0_{\A_2}),\\
	\0_g(a_1,a_2)&=(0_{\A_1},a_1).
	\end{align*}
	Then by \cref{w'-def,eps-def} for all $x,y\in G$ 
	\begin{align*}
	w'(x,y)&=1_x1_{xy}\prod_{g\in\Lb}\0_g\circ w(g\m x\cdot\ol{x\m g},(\ol{x\m g})\m y\cdot\ol{y\m x\m g})\\
	&=1_x1_{xy}\prod_{g\in\Lb}\0_g\circ w(e,e)\\
	&=1_x1_{xy}(w(e,e)_1,w(e,e)_1)
	\end{align*}
	and
	\begin{align*}
	\e(x)&=1_x\prod_{g\in\Lb}\0_g(w(g\m,x)w(g\m x\cdot\ol{x\m g},(\ol{x\m g})\m)\m)\\
	&=1_x\prod_{g\in\Lb}\0_g(w(g\m,x)w(e,g\m x)\m)\\
	&=1_x(\0_e(w(e,x)w(e,x)\m)_1,\0_g(w(g\m,x)w(e,g\m x)\m)_2)\\
	&=1_x(1_{\A_1},w(g\m,x)_1w(e,g\m x)\m_1).
	\end{align*}
	It follows from the partial $2$-cocycle identity for $w$ that
	\begin{align}
	w(e,g)&=1_gw(e,e)=(0_{\A_1},w(e,e)_2),\notag\\
	w(e,g\m)&=1_{g\m}w(e,e)=(w(e,e)_1,0_{\A_2}),\notag\\
	w(g,e)&=\af_g(1_{g\m}w(e,e))=(0_{\A_1},w(e,e)_1),\notag\\
	w(g\m,e)&=\af_{g\m}(1_gw(e,e))=(w(e,e)_2,0_{\A_2}),\notag\\
	w(g,g\m)&=(0_{\A_1},w(g\m,g)_1 w(e,e)\m_2 w(e,e)_1).\label{w(g_g-inv)=(0_A_1...)}
	\end{align}
	So, we have
	\begin{align}
	\e(e)&=(1_{\A_1},w(g\m,e)_1w(e,g\m)\m_1)=(1_{\A_1},w(e,e)_2w(e,e)\m_1),\label{eps-of-e}\\
	\e(g)&=(0_{\A_1},w(g\m,g)_1w(e,e)\m_1),\label{eps-of-g}\\
	\e(g\m)&=1_{g\m},\label{eps-of-g-inv}
	\end{align}
	and thus
	\begin{align*}
	(\dl^1\e)(e,e)&=\e(e)=(1_{\A_1},w(e,e)_2w(e,e)\m_1),\\
	(\dl^1\e)(e,g)&=1_g\e(e)=(0_{\A_1},w(e,e)_2w(e,e)\m_1),\\
	(\dl^1\e)(e,g\m)&=1_{g\m}\e(e)=(1_{\A_1},0_{\A_2})=1_{g\m},\\
	(\dl^1\e)(g,e)&=\af_g(1_{g\m}\e(e))=1_g,\\
	(\dl^1\e)(g\m,e)&=\af_{g\m}(1_g\e(e))=(w(e,e)_2w(e,e)\m_1,0_{\A_2}),\\
	(\dl^1\e)(g,g\m)&=\af_g(1_{g\m}\e(g\m))\e(e)\m\e(g)=(0_{\A_1},w(e,e)\m_2 w(g\m,g)_1),\\
	(\dl^1\e)(g\m,g)&=\af_{g\m}(1_g\e(g))\e(e)\m\e(g\m)=(w(g\m,g)_1w(e,e)\m_1,0_{\A_2}).
	\end{align*}
	Since $\D_g\D_{g\m}=\{0_\A\}$, both $w$ and $\dl^1\e$ are zero at $(g,g)$ and $(g\m,g\m)$. Hence, we explicitly see that $w=\dl^1\e\cdot w'$.
	
	Removing $1_x1_{xy}$ from $w'$ as in \cref{w'-tilde-def}, we have for all $x,y\in G$
	\begin{align*}
		\wtl{w'}(x,y)=(w(e,e)_1,w(e,e)_1).
	\end{align*} 
	Furthermore, by \cref{tilde-epsilon-def,eps-of-e,eps-of-g,eps-of-g-inv}
	\begin{align*}
		\tl\e(e)&=\e(e)=(1_{\A_1},w(e,e)_2w(e,e)\m_1),\\
		\tl\e(g)&=\e(g)+1_\A-1_g=(1_{\A_1},w(g\m,g)_1w(e,e)\m_1),\\
		\tl\e(g\m)&=\e(g\m)+1_\A-1_{g\m}=1_\A.
	\end{align*}
	Therefore, by \cref{tilde-delta-w-resp-to-tl-af,tilde-alpha-def}
	\begin{align*}
		(\tl\dl^1\tl\e)(e,e)&=\tl\e(e)=(1_{\A_1},w(e,e)_2w(e,e)\m_1),\\
		(\tl\dl^1\tl\e)(e,g)&=\tl\e(e)=(1_{\A_1},w(e,e)_2w(e,e)\m_1),\\
		(\tl\dl^1\tl\e)(e,g\m)&=\tl\e(e)=(1_{\A_1},w(e,e)_2w(e,e)\m_1),\\
		(\tl\dl^1\tl\e)(g,e)&=\af_g(1_{g\m}\tl\e(e))+1_{g\m}=1_\A,\\
		(\tl\dl^1\tl\e)(g\m,e)&=\af_{g\m}(1_g\tl\e(e))+1_g=(w(e,e)_2w(e,e)\m_1,1_{\A_2}),\\
		(\tl\dl^1\tl\e)(g,g\m)&=\tl\e(e)\m\tl\e(g)=(1_{\A_1},w(e,e)\m_2 w(g\m,g)_1),\\
		(\tl\dl^1\tl\e)(g\m,g)&=(\af_{g\m}(1_g\tl\e(g))+1_g)\tl\e(e)\m=(w(g\m,g)_1w(e,e)\m_1,w(e,e)\m_2w(e,e)_1),\\
		(\tl\dl^1\tl\e)(g,g)&=\tl\e(g)=(1_{\A_1},w(g\m,g)_1w(e,e)\m_1),\\
		(\tl\dl^1\tl\e)(g\m,g\m)&=\tl\e(g)\m=(1_{\A_1},w(g\m,g)\m_1w(e,e)_1).
	\end{align*}
	Thus, by \cref{tilde-w-in-terms-of-tl-w'}
	\begin{align*}
		\wtl w(e,e)&=(w(e,e)_1,w(e,e)_2)=w(e,e),\\
		\wtl w(e,g)&=(w(e,e)_1,w(e,e)_2)=w(e,e),\\
		\wtl w(e,g\m)&=(w(e,e)_1,w(e,e)_2)=w(e,e),\\
		\wtl w(g,e)&=(w(e,e)_1,w(e,e)_1),\\
		\wtl w(g\m,e)&=(w(e,e)_2,w(e,e)_1),\\
		\wtl w(g,g\m)&=(w(e,e)_1,w(e,e)\m_2 w(g\m,g)_1w(e,e)_1)=(w(e,e)_1,w(g,g\m)_2), & \text{(by \cref{w(g_g-inv)=(0_A_1...)})}\\
		\wtl w(g\m,g)&=(w(g\m,g)_1,w(e,e)\m_2w(e,e)^2_1),\\
		\wtl w(g,g)&=(w(e,e)_1,w(g\m,g)_1),\\
		\wtl w(g\m,g\m)&=(w(e,e)_1,w(g\m,g)\m_1w(e,e)^2_1).
	\end{align*}
	
	Finally, to calculate $u$, we shall use \cref{u-def-n>0,psi(a_1_a_2_a_3)-on-e,psi(a_1_a_2_a_3)-on-g,psi(a_1_a_2_a_3)-on-g-inv}:
	\begin{align*}
		\psi(u(e,e))|_e&=\wtl w(e,e)\wtl w(e,e)\wtl w(e,e)\m=\wtl w(e,e)=w(e,e)=(w(e,e)_1,w(e,e)_2),\\
		\psi(u(e,e))|_g&=\wtl w(g\m,e)\wtl w(g\m,e)\wtl w(g\m,e)\m=\wtl w(g\m,e)=(w(e,e)_2,w(e,e)_1),\\
		\psi(u(e,e))|_{g\m}&=\wtl w(g,e)\wtl w(g,e)\wtl w(g,e)\m=\wtl w(g,e)=(w(e,e)_1,w(e,e)_1),
	\end{align*}
	whence
	\begin{align*}
		u(e,e)=(w(e,e)_1,w(e,e)_2,w(e,e)_1).
	\end{align*}
	\begin{align*}
	\psi(u(e,g))|_e&=\wtl w(e,e)\wtl w(e,g)\wtl w(e,g)\m=\wtl w(e,e)=w(e,e)=(w(e,e)_1,w(e,e)_2),\\
	\psi(u(e,g))|_g&=\wtl w(g\m,e)\wtl w(g\m,g)\wtl w(g\m,g)\m=\wtl w(g\m,e)=(w(e,e)_2,w(e,e)_1),\\
	\psi(u(e,g))|_{g\m}&=\wtl w(g,e)\wtl w(g,g)\wtl w(g,g)\m=\wtl w(g,e)=(w(e,e)_1,w(e,e)_1),
	\end{align*}
	whence
	\begin{align*}
	u(e,g)=(w(e,e)_1,w(e,e)_2,w(e,e)_1).
	\end{align*}
	\begin{align*}
	\psi(u(e,g\m))|_e&=\wtl w(e,e)\wtl w(e,g\m)\wtl w(e,g\m)\m=\wtl w(e,e)=w(e,e)=(w(e,e)_1,w(e,e)_2),\\
	\psi(u(e,g\m))|_g&=\wtl w(g\m,e)\wtl w(g\m,g\m)\wtl w(g\m,g\m)\m=\wtl w(g\m,e)=(w(e,e)_2,w(e,e)_1),\\
	\psi(u(e,g\m))|_{g\m}&=\wtl w(g,e)\wtl w(g,g\m)\wtl w(g,g\m)\m=\wtl w(g,e)=(w(e,e)_1,w(e,e)_1),
	\end{align*}
	whence
	\begin{align*}
	u(e,g\m)=(w(e,e)_1,w(e,e)_2,w(e,e)_1).
	\end{align*}
	\begin{align*}
	\psi(u(g,e))|_e&=\wtl w(e,g)\wtl w(g,e)\wtl w(e,g)\m=\wtl w(g,e)=(w(e,e)_1,w(e,e)_1),\\
	\psi(u(g,e))|_g&=\wtl w(g\m,g)\wtl w(e,e)\wtl w(g\m,g)\m=\wtl w(e,e)=(w(e,e)_1,w(e,e)_2),\\
	\psi(u(g,e))|_{g\m}&=\wtl w(g,g)\wtl w(g\m,e)\wtl w(g,g)\m=\wtl w(g\m,e)=(w(e,e)_2,w(e,e)_1),
	\end{align*}
	whence
	\begin{align*}
	u(g,e)=(w(e,e)_1,w(e,e)_1,w(e,e)_2).
	\end{align*}
	\begin{align*}
	\psi(u(g\m,e))|_e&=\wtl w(e,g\m)\wtl w(g\m,e)\wtl w(e,g\m)\m=\wtl w(g\m,e)=(w(e,e)_2,w(e,e)_1),\\
	\psi(u(g\m,e))|_g&=\wtl w(g\m,g\m)\wtl w(g,e)\wtl w(g\m,g\m)\m=\wtl w(g,e)=(w(e,e)_1,w(e,e)_1),\\
	\psi(u(g\m,e))|_{g\m}&=\wtl w(g,g\m)\wtl w(e,e)\wtl w(g,g\m)\m=\wtl w(e,e)=(w(e,e)_1,w(e,e)_2),
	\end{align*}
	whence
	\begin{align*}
	u(g\m,e)=(w(e,e)_2,w(e,e)_1,w(e,e)_1).
	\end{align*}
	\begin{align*}
	\psi(u(g,g\m))|_e&=\wtl w(e,g)\wtl w(g,g\m)\wtl w(e,e)\m\\
	&=(w(e,e)_1,w(g,g\m)_2)\\
	\psi(u(g,g\m))|_g&=\wtl w(g\m,g)\wtl w(e,g\m)\wtl w(g\m,e)\m\\
	&=(w(g\m,g)_1,w(e,e)\m_2w(e,e)^2_1)\\
	&\quad\cdot(w(e,e)_1,w(e,e)_2)\\
	&\quad\cdot(w(e,e)\m_2,w(e,e)\m_1)\\
	&=(w(g,g\m)_2,w(e,e)_1), & \text{(by \cref{w(g_g-inv)=(0_A_1...)})}\\
	\psi(u(g,g\m))|_{g\m}&=\wtl w(g,g)\wtl w(g\m,g\m)\wtl w(g,e)\m\\
	&=(w(e,e)_1,w(g\m,g)_1)\\
	&\quad\cdot(w(e,e)_1,w(g\m,g)\m_1w(e,e)^2_1)\\
	&\quad\cdot(w(e,e)\m_1,w(e,e)\m_1)\\
	&=(w(e,e)_1,w(e,e)_1),
	\end{align*}
	whence
	\begin{align*}
	u(g,g\m)=(w(e,e)_1,w(g,g\m)_2,w(e,e)_1).
	\end{align*}
	\begin{align*}
	\psi(u(g\m,g))|_e&=\wtl w(e,g\m)\wtl w(g\m,g)\wtl w(e,e)\m\\
	&=(w(g\m,g)_1,w(e,e)\m_2w(e,e)^2_1),\\
	\psi(u(g\m,g))|_g&=\wtl w(g\m,g\m)\wtl w(g,g)\wtl w(g\m,e)\m\\
	&=(w(e,e)_1,w(g\m,g)\m_1w(e,e)^2_1)\\
	&\quad\cdot(w(e,e)_1,w(g\m,g)_1)\\
	&\quad\cdot(w(e,e)\m_2,w(e,e)\m_1)\\
	&=(w(e,e)\m_2w(e,e)^2_1,w(e,e)_1),\\
	\psi(u(g\m,g))|_{g\m}&=\wtl w(g,g\m)\wtl w(e,g)\wtl w(g,e)\m\\
	&=(w(e,e)_1,w(g,g\m)_2)\\
	&\quad\cdot(w(e,e)_1,w(e,e)_2)\\
	&\quad\cdot(w(e,e)\m_1,w(e,e)\m_1)\\
	&=(w(e,e)_1,w(g\m,g)_1), & \text{(by \cref{w(g_g-inv)=(0_A_1...)})}
	\end{align*}
	whence
	\begin{align*}
	u(g\m,g)=(w(g\m,g)_1,w(e,e)\m_2w(e,e)^2_1,w(e,e)_1).
	\end{align*}
	\begin{align*}
	\psi(u(g,g))|_e&=\wtl w(e,g)\wtl w(g,g)\wtl w(e,g\m)\m\\
	&=(w(e,e)_1,w(g\m,g)_1),\\
	\psi(u(g,g))|_g&=\wtl w(g\m,g)\wtl w(e,g)\wtl w(g\m,g\m)\m\\
	&=(w(g\m,g)_1,w(e,e)\m_2w(e,e)^2_1)\\
	&\quad\cdot(w(e,e)_1,w(e,e)_2)\\
	&\quad\cdot(w(e,e)\m_1,w(g\m,g)_1w(e,e)^{-2}_1)\\
	&=(w(g\m,g)_1,w(g\m,g)_1),\\
	\psi(u(g,g))|_{g\m}&=\wtl w(g,g)\wtl w(g\m,g)\wtl w(g,g\m)\m\\
	&=(w(e,e)_1,w(g\m,g)_1)\\
	&\quad\cdot(w(g\m,g)_1,w(e,e)\m_2w(e,e)^2_1)\\
	&\quad\cdot(w(e,e)\m_1,w(g,g\m)\m_2)\\
	&=(w(g\m,g)_1,w(e,e)_1), & \text{(by \cref{w(g_g-inv)=(0_A_1...)})}
	\end{align*}
	whence
	\begin{align*}
	u(g,g)=(w(e,e)_1,w(g\m,g)_1,w(g\m,g)_1).
	\end{align*}
	\begin{align*}
	\psi(u(g\m,g\m))|_e&=\wtl w(e,g\m)\wtl w(g\m,g\m)\wtl w(e,g)\m\\
	&=(w(e,e)_1,w(g\m,g)\m_1w(e,e)^2_1),\\
	\psi(u(g\m,g\m))|_g&=\wtl w(g\m,g\m)\wtl w(g,g\m)\wtl w(g\m,g)\m\\
	&=(w(e,e)_1,w(g\m,g)\m_1w(e,e)^2_1)\\
	&\quad\cdot(w(e,e)_1,w(g,g\m)_2)\\
	&\quad\cdot(w(g\m,g)\m_1,w(e,e)_2w(e,e)^{-2}_1)\\
	&=(w(g\m,g)\m_1w(e,e)^2_1,w(e,e)_1), & \text{(by \cref{w(g_g-inv)=(0_A_1...)})}\\
	\psi(u(g\m,g\m))|_{g\m}&=\wtl w(g,g\m)\wtl w(e,g\m)\wtl w(g,g)\m\\
	&=(w(e,e)_1,w(g,g\m)_2)\\
	&\quad\cdot(w(e,e)_1,w(e,e)_2)\\
	&\quad\cdot(w(e,e)\m_1,w(g\m,g)\m_1)\\
	&=(w(e,e)_1,w(e,e)_1), & \text{(by \cref{w(g_g-inv)=(0_A_1...)})}
	\end{align*}
	whence
	\begin{align*}
	u(g\m,g\m)=(w(e,e)_1,w(g\m,g)\m_1w(e,e)^2_1,w(e,e)_1).
	\end{align*}
\end{proof}

	\begin{rem}\label{H^2(Z_3_A)-triv}
		The groups $H^2(G,\A)$ and $H^2(G,\B)$ are trivial.
	\end{rem}
\begin{proof}
	Let $w\in Z^2(G,\A)$. Without loss of generality, we may assume $w$ to be normalized (see~\cite[Remark~2.6]{DK}), i.e. $w(e,e)=1_\A$, $w(e,g)=w(g,e)=1_g$ and $w(e,g\m)=w(g\m,e)=1_{g\m}$. Take $\e(e)=1_\A$, $\e(g)=1_g$ and $\e(g\m)=w(g\m,g)\m$. Then $v:=w\cdot\dl^1\e$ is also normalized and satisfies additionally $v(g\m,g)=1_{g\m}$. Writing the partial $2$-cocycle identity for $v$ with the triple $(g,g\m,g)$, we obtain $v(g,g\m)=\af_g(v(g\m,g))=1_g$. Since also $v(g,g)$ and $v(g\m,g\m)$ belong to $\D_g\D_{g\m}=\{0_\A\}$, we conclude that $v$ is trivial.
	
	Let $u\in Z^2(G,\B)$. As in the partial case, multiplying $u$ by a suitable coboundary, we may make 
	\begin{align}\label{u(e_e)=...=u(g_g-inv)=1_B}
		u(e,e)=u(e,g)=u(g,e)=u(e,g\m)=u(g\m,e)=u(g\m,g)=u(g,g\m)=1_\B.
	\end{align} 
	Now, the $2$-cocycle identity for $u$ written with the triple $(g,g,g)$ gives $\bt_g(u(g,g))=u(g,g)$, so that $u(g,g)_1=u(g,g)_2=u(g,g)_3$. Furthermore, the same identity with $(g\m,g,g)$ implies $u(g\m,g\m)=\bt_{g\m}(u(g,g))\m=u(g,g)\m$. Thus, it suffices to make $u(g,g)=1_\B$ maintaining the conditions \cref{u(e_e)=...=u(g_g-inv)=1_B}. Take $\e(e)=1_\B$, $\e(g)=(1_{\A_1},1_{\A_2},u(g,g)\m_1)$ and $\e(g\m)=(1_{\A_1},u(g,g)_1,1_{\A_3})$. Then $v:=u\cdot\dl^1\e$ is normalized,
	\begin{align*}
		v(g\m,g)=u(g\m,g)\bt_{g\m}(\e(g))\e(e)\m\e(g\m)=\bt_{g\m}(\e(g))\e(g\m)=1_\B,
	\end{align*}
	so that $v(g,g\m)=\bt_g(v(g\m,g))=1_\B$. Finally,
	\begin{align*}
		v(g,g)&=u(g,g)\bt_g(\e(g))\e(g\m)\m\e(g)\\
		&=u(g,g)\bt_g(1_{\A_1},1_{\A_2},u(g,g)\m_1)(1_{\A_1},u(g,g)_1,1_{\A_3})\m(1_{\A_1},1_{\A_2},u(g,g)\m_1)\\
		&=u(g,g)(u(g,g)\m_1,u(g,g)\m_1,u(g,g)\m_1)\\
		&=1_\B,
	\end{align*}
	so that $v(g\m,g\m)=v(g,g)\m=1_\B$ too.
\end{proof}
\section*{Acknowledgments}
The first two authors would like to express their sincere gratitude to the Department of Mathematics of the University of Murcia for its warm hospitality during their visits. We are also grateful to the referee who has pointed out numerous inaccuracies throughout the text, proposed various improvements in the exposition and gave a suggestion to add an example, which resulted in a new section of the paper.

\bibliography{bibl-pact}{}
\bibliographystyle{acm}

\end{document}